\numberwithin{equation}{section}
\theoremstyle{plain}
\newtheorem{theorem}{Theorem}[section]
\newtheorem{corollary}[theorem]{Corollary}
\newtheorem{proposition}[theorem]{Proposition}
\newtheorem{lemma}[theorem]{Lemma}
\theoremstyle{definition}
\newtheorem{definition}[theorem]{Definition}
\theoremstyle{remark}
\newtheorem{remark}[theorem]{Remark}
\newtheorem{example}[theorem]{Example}
\newtheorem{conj}[theorem]{Conjecture}
\numberwithin{equation}{section}
\newcommand{\Proj}{\textnormal{Proj}\,}
\newcommand{\Supp}{\textnormal{Supp}\,}
\newcommand{\In}{\mathrm{in}}
\newcommand{\Gin}{\mathrm{gin}}
\newcommand{\IN}{\mathbf{in}}
\newcommand{\GIN}{\mathbf{gin}}
\newcommand{\supp}{\Delta\mathrm{Supp}}
\newcommand{\sat}{{\textnormal{sat}}}
\newcommand{\reg}{\textnormal{reg}}
\newcommand{\PP}{\mathbb{P}}
\newcommand{\GLin}{GL}
\newcommand{\GrassScheme}[2]{\mathbf{Gr}_{#1}^{#2}}
\newcommand{\GrassFunctor}[2]{\underline{\mathbf{Gr}}_{#1}^{#2}}
\newcommand{\vs}[1]{\mathtt{vs}(#1)}
\newcommand{\HilbScheme}[2]{\mathbf{Hilb}_{#1}^{#2}}
\newcommand{\HilbSchemeR}[3]{\mathbf{Hilb}_{#1}^{#2,[#3]}}
\renewcommand{\O}{\mathcal O}
\title
{Double-Generic initial ideal and Hilbert scheme}
\author[]{C. Bertone}
\address{Dip. di Matematica dell'Universit\`a di Torino, Torino, Italy}
\email{cristina.bertone@unito.it}
\author[]{F. Cioffi}
\address{Dip. di Matematica e Applicazioni dell'Universit\`a di Napoli, Napoli, Italy}
\email{cioffifr@unina.it}
\author[]{M. Roggero}
\address{Dip. di Matematica dell'Universit\`a di Torino, Torino, Italy}
\email{margherita.roggero@unito.it}
\thanks{The second author was partially supported by GNSAGA (INdAM, Italy). The second and third author were partially supported by the framework of PRIN 2010-11 \lq\lq Geometria delle variet\`a\ algebriche\rq\rq, cofinanced by MIUR}
\keywords{generic initial ideal, extensor, Hilbert scheme, irreducible component, maximal Hilbert function, rationality}
\subjclass[2010]{15A75, 14C05, 14Q15, 14E08}
\begin{document}

\maketitle

\begin{abstract} 
Following the approach in the book \lq\lq Commutative Algebra\rq\rq, by D. Eisenbud, where the author describes the generic initial ideal by means of a suitable total order on the terms of an exterior power, we introduce first the {\em generic initial extensor} of a subset  of a Grassmannian and then the {\em double-generic initial ideal} of a so-called \emph{\GLin-stable subset}  of a Hilbert scheme. We discuss the features of these new notions and introduce also a partial order which gives another useful description of them. 
The double-generic initial ideals turn out to be the appropriate points to understand some geometric pro\-per\-ties of a Hilbert scheme: they provide a necessary condition for a Borel ideal to correspond to a point of a given irreducible component, lower bounds for the number of irreducible components in a Hilbert scheme and the maximal Hilbert function in every irreducible component. Moreover, we prove that every isolated component having a smooth double-generic initial ideal is rational. As a byproduct, we prove that the Cohen-Macaulay locus of the Hilbert scheme parameterizing subschemes of codimension 2 is the union of open subsets isomorphic to affine spaces. This improves results by J. Fogarty (1968) and R. Treger (1989).

\end{abstract}


\section*{Introduction}

Let $\HilbScheme{p(t)}{n}$ denote the Hilbert scheme parameterizing all subschemes in a projective space $\mathbb P^n_K$ with Hilbert polynomial $p(t)$ over an infinite field $K$.
The Hilbert scheme was first introduced by Grothendieck \cite{Gro} in the $60$s and, although it has been intensively studied by several authors, its structure and features are still quite mysterious. Also the topological structure of a Hilbert scheme is not well-understood yet. For instance, in general it is not  known how many irreducible components a Hilbert scheme has and which of them are rational. In these topics, some few special cases have been treated, for example, by J. Fogarty \cite{Fo}, R. Piene and M. Schlessinger \cite{RaSc}, R. Treger \cite{TregerIII}, P. Lella and M. Roggero \cite{LR}.

The aim of this paper is to develop algebraic constructive methods in the context of the computation of initial and generic initial ideals, in order to study properties of the Hilbert scheme. Therefore, we often identify a point of $\HilbScheme{p(t)}{n}$ with any  ideal  in $S:=K[x_0, \dots, x_n]$ defining it as a scheme in $\mathbb P^n_K$ and we endow $S$ with a term order. By our techniques, we give some lower bounds for the number of irreducible components of $\HilbScheme{p(t)}{n}$ and  determine the maximal Hilbert function in every irreducible component. Moreover, we obtain some interesting results about the rationality of the components of $\HilbScheme{p(t)}{n}$, in particular for the components that contain a point corresponding to an arithmetically Cohen-Macaulay scheme in codimension two.

The notion of generic initial ideal has attracted the attention of many researchers since its first introduction. Indeed, a generic initial ideal $\Gin(I)$ of a homogeneous  ideal $I\subset S$ contains many information about the ideal $I$ and about the scheme defined by $I$ (e.g.~\cite{Gunnar}). It is noteworthy that $\Gin(I)$ can be obtained from $I$ by a flat deformation corresponding to a rational curve on the Hilbert scheme. Furthermore, the set of  generic initial ideals coincides with that of  Borel-fixed ideals, which appear as a useful tool in the investigation of the Hilbert scheme, especially for what concerns its components, already in the $60$s  \cite{H66}.
Indeed, \lq\lq every component and intersection of components of a Hilbert scheme contains at least one Borel-fixed ideal\rq\rq\ \cite{R1}. This property follows essentially by two facts: the generic initial ideals are Borel-fixed and the irreducible components of $\HilbScheme{p(t)}{n}$ are invariant under the action of the general linear group $\mathrm{GL}:=\mathrm{GL}_K(n+1)$ induced by the action on $S$.  

Every component of $\HilbScheme{p(t)}{n}$ can contain more than one Borel-fixed ideal. Nevertheless, for any given term order in $S$ and  for every irreducible component $Y$ of $\HilbScheme{p(t)}{n}$, we identify a special Borel-fixed ideal $\mathbf{G}_Y$, which we call {\em double-generic initial ideal}, and which gives us information about $Y$. Roughly speaking, $\mathbf{G}_Y$ is the \lq\lq generic initial ideal of the generic (and the general)  point of $Y$\rq\rq (see Definition \ref{def:gigi}).
The introduction and the investigation of the notion of double-generic initial ideal have been inspired by the ideas of both Gr\"obner strata and marked schemes, which support the main results of this paper, although they do not explicitely appear. In fact, some of our examples have been obtained  by the available computational methods already developed in the study of marked schemes by several authors. 

It is not easy to get a generic initial ideal: a deterministic computation using parameters can be very heavy, while random changes of coordinates gives a non-certain result.  The double-generic initial ideal overcomes these difficulties, since it is easy to individuate $\mathbf{G}_Y$ starting from the list of  Borel-fixed ideals on $Y$: it is sufficient to detect the maximum in this list w.r.t.~a suitable order on the Borel-fixed  ideals of $Y$ (see Theorem \ref{th:minoreminore} and Corollary \ref{ginmaggiore}).

Along the paper, we consider the classical scheme-theoretic embedding of the Hilbert scheme  $\HilbScheme{p(t)}{n}$ in the Grassmannian $\GrassScheme{S_m}{q}$, where $S_m$ is the vector space of the  homogeneous polynomials in $S$  of degree $m$, for $m$ a sufficiently large degree,  and $q:={n+m \choose n}-p(m)$. More precisely, it is sufficient to take $m\geq r$, where $r$ is the Gotzmann number of the Hilbert polynomial $p(t)$ (for more details see for instance \cite{CS}). Since in turn the Grassmannian $\GrassScheme{S_m}{q}$ can be embedded in $\mathbb P(\wedge^q S_m)$ via the Pl\"ucker embedding, a point $V$ of $\HilbScheme{p(t)}{n}$ can be identified with a non-zero totally decomposable tensor, i.e.~an {\it extensor} $f_1\wedge\dots\wedge f_q$, where $f_1,\dots,f_q\in S_m$ are linearly independent polynomials such that the ideal \hskip -0.5mm $(f_1,\dots,f_q)$ defines the projective subscheme corresponding to~$V$. 

In the above setting, we follow the approach presented by D.~Eisenbud in his book \cite{Ei} to deal with the generic initial ideal by means of a suitable total order on the terms of $\wedge^q S_m$, depending  on the  term order  $\prec $ on $S$ (see \eqref{ordEisenbud}). Thus, we associate to every subset $W$ of $\GrassScheme{S_m}{q}$ a suitable set of terms in $\wedge^q S_m$ called $\Delta$-support of $W$ (see Definition \ref{def:support}), and then introduce the initial extensor $\IN(W)$ of $W$ as the maximum of the $\Delta$-support of $W$. Further,  we also introduce the generic initial extensor $\GIN(W)$ of $W$ as the maximum of the $\Delta$-support of the orbit of $W$ under the usual action of $\mathrm{GL}$ on $\GrassScheme{S_m}{q}$  (see Definition \ref{def:extIn}).  We prove that $\IN(W)$ and $\GIN(W)$ do not change when $W$ is replaced by its closure $\overline{W}$ (Proposition \ref{prop:chiusura}). In particular, if $W$ is closed and irreducible, $\IN(W)$ and $\GIN(W)$ can be read as the initial extensor and the generic initial extensor of either the generic point of $W$ or the set of closed points of $W$  (Proposition \ref{prop:chiusura}, Remark \ref{rem:generic point}). Moreover, exploiting the analogous property for ideals given in \cite[Theorem 15.18]{Ei}, we prove that $\GIN(W)$  is fixed by the Borel subgroup of $\mathrm{GL}$, up to multiplication by a non-null element of $K$ (Theorem \ref{th:eisenbud} and Corollary \ref{cor:chiusura3}). 

In Section 3, we focus our attention on the subsets $W$ of $\GrassScheme{S_m}{q}$ that are closed and stable under the action of $\mathrm{GL}$ and prove that  $\IN(W)$ and $\GIN(W)$ coincide  and the corresponding point of the Grassmannian belongs to $W$ (Proposition \ref{prop:componenti}). If   $W$ is closed and irreducible, then there is a dense open subset of $W$ consisting of points having $\GIN(W)$ as generic initial extensor (Proposition~\ref{lemma:UL}).

In Section 4, we concentrate on subsets of the Hilbert scheme. We prove that there is a perfect correspondence between the   notions of initial  and generic initial extensor of any point $V$ of $\HilbScheme{p(t)}{n}$ and those of initial and generic initial ideal of any ideal defining  $V$ as a subscheme of $\PP^n_K$  (Theorem \ref{gin appartiene} and Corollary \ref{appartenenza}). Furthermore, we prove that if $Y\subseteq \HilbScheme{p(t)}{n}$ is irreducible, closed and invariant under the action of $\mathrm{GL}$, then the ideal associated to $\GIN(Y)$ does not depend on the chosen $m\geq r$ for the embedding in a Grassmannian scheme (Corollaries \ref{appartenenza} and \ref{prop:GIGI}). This allows us to define the double generic initial ideal of $Y$ (Definition \ref{def:gigi}). We also present some relevant subsets of $\HilbScheme{p(t)}{n}$ which are irreducible, closed and invariant under the action of $\mathrm{GL}$ (Examples \ref{ex:singular} and~\ref{ex:gore e punti}).

In Section 5, we introduce a suitable partial order $\prec\!\!\prec$ on the terms of  $\wedge^q S_m$  and prove that  the initial extensor and the generic initial extensor of a  closed irreducible subset $W\subseteq \GrassScheme{S_m}{q}$ are, respectively,  the maxima of the $\Delta$-supports of $W$ and of its orbit with respect to this partial order (see Definition~\ref{precprec} and Theorem~\ref{th:minoreminore}).  Although a partial order might appear less convenient  than a total order, this feature of $\prec\!\!\prec$ is in fact a crucial point of the paper, which we exploit in order to obtain some relevant applications. We also explore some of the properties of this partial order, in particular when we consider either the degrevlex term order or a constant Hilbert polynomial (Propositions \ref{prop: m cresce} and \ref{m cresce con polinomio costante}).

Finally, in Section \ref{sec:applications}, we present some interesting applications of the previous results. First, we point out a necessary condition for a Borel term to correspond to a point of a given irreducible component of a Hilbert scheme (Proposition \ref{cor:condizione necessaria}). Then, we obtain lower bounds on the number of irreducible components of  $\HilbScheme{p(t)}{n}$ simply counting the maximal elements with respect to $\prec\!\!\prec$ among the extensor terms corresponding to the Borel-fixed ideals in $\HilbScheme{p(t)}{n}$. We observe that this bound depends on the chosen term order on $S$ (Proposition \ref{prop:numero componenti} and Example \ref{ex:4 componenti}). The list of {\em all} the saturated Borel-fixed ideals in $\HilbScheme{p(t)}{n}$ can be obtained by the algorithms presented in \cite{CLMR,PL} in the characteristic  zero case, and in \cite{B2014} for every characteristic.

Recall that, for every irreducible, closed subset $Y\subseteq \HilbScheme{p(t)}{n}$ that is stable under the action of $\mathrm{GL}$, there is a maximum among the Hilbert functions of its points (Remark \ref{rem:maximum}). We prove that this maximum is reached by the point corresponding to the double-generic initial ideal $\mathbf{G}_Y$, hence by the maximum with respect to $\prec\!\!\prec$, when we choose the degrevlex term order on $S$ (Theorem \ref{funzione massima}). We conjecture that an analogous result holds for minimal Hilbert functions, when the deglex term order is chosen.
 
We conclude by investigating the rationality of some components of a Hilbert scheme. We prove that, if $Y$ is an isolated irreducible component of $\HilbScheme{p(t)}{n}$ and $\GIN(Y)$ corresponds to a smooth point in $Y$, then $Y$ is rational (Theorem \ref{razionale}). In the final Example \ref{ex:tommasino}, we exhibit a Hilbert scheme having two components with the same double-generic initial ideal which is smooth on each component, but singular on the Hilbert scheme. Thus, by Theorem \ref{razionale} both these components are rational. As a relevant consequence of Theorem \ref{razionale}, by exploiting \cite[Th\'{e}or\`{e}me 2(i)]{Elli} we prove that every component of $\HilbScheme{p(t)}{n}$ containing a Cohen-Macaulay point of codimension $2$ is rational (Corollary \ref{prop:razionale aCM codim 2}).  More precisely, the Cohen-Macaulay locus of such a component is the orbit of an open subset isomorphic to an affine space under the action of $\mathrm{GL}$. This improves results of J. Fogarty \cite[Theorem 2.4 and Corollary 2.6]{Fo} and R. Treger \cite[Theorem 2.6]{TregerIII} by independent arguments. All these results on rationality give a partial answer to one of the open questions on Hilbert schemes collected at the AIM workshop Components of Hilbert Schemes (Palo Alto, July 19 to 23, 2010) \cite[Problem 1.45]{AimQ}.


\section{Generalities}
\label{sec:generalities}

Let $S:=K[x_0,\dots,x_n]$ be the polynomial ring over an infinite field $K$. For every integer $m$, $S_m$ denotes the homogeneous component of degree $m$ of $S$; if $A\subseteq S$, then $A_m$ denotes $A\cap S_m$. Elements and ideals in $S$ will be always assumed to be homogeneous. 
A term $\tau$ of $S$ is a power product $\tau = x_0^{\alpha_0}\cdot\dots\cdot x_n^{\alpha_n}$. 
The set of all the terms of $S$ will be denoted by $\mathbb T$. 

We denote by $\prec$ a given term order in $S$ and  assume that $x_0\prec x_1\prec\dots\prec x_n$. In our setting, if $\tau=x_0^{\alpha_0}\dots x_n^{\alpha_n}$ and $\sigma =x_0^{\beta_0}\dots x_n^{\beta_n}$ are two terms in $S$ of the same degree, then 
\begin{itemize}
\item if $\prec$ is the deglex term order, then $\tau\prec \sigma$ if and only if $\alpha_k<\beta_k$, where $k:=\max\{i\in \{0,\dots,n\} \ \vert \ \alpha_i\not= \beta_i\}$; 
\item if $\prec$ is the degrevlex term order, then $\tau\prec \sigma$ if and only if $\alpha_k>\beta_k$, where $k:=\min\{i\in \{0,\dots,n\} \ \vert \ \alpha_i\not= \beta_i\}$.
\end{itemize}

If $J$ is a monomial ideal in $S$, $B_J$ denotes the monomial basis of $J$, i.e.~the set of the terms that are minimal generators of $J$. For any non-zero polynomial $f \in S$, the \textit{support} $\Supp(f)$ of $f$ is the set of the terms that appear in $f$ with a non-zero coefficient. 

The maximal term occurring in the support of $f$ with respect to $\prec$ (w.r.t.~$\prec$) is called the {\em initial term} of $f$ w.r.t.~$\prec$ and denoted by $\mathrm{in}_\prec(f)$. If $I$ is an ideal in $S$, the {\em initial ideal} $\mathrm{in}_\prec(I)$ of $I$ w.r.t.~$\prec$ is the ideal generated by the initial terms of the polynomials in $I$. When there is no ambiguity, we will write $\mathrm{in}(f)$ and $\mathrm{in}(I)$ in place of $\mathrm{in}_\prec(f)$ and $\mathrm{in}_\prec(I)$.

A set $\{f_1,\dots,f_t\}$ of monic polynomials of an ideal $I$ is the {\em reduced Gr\"obner basis} of $I$, w.r.t.~$\prec$, if 
$\mathrm{in}(I)=(\mathrm{in}(f_1),\dots,\mathrm{in}(f_t))$ and no term in  $\Supp(f_i)\setminus \{\mathrm{in}(f_i)\}$  belongs to $\mathrm{in}(I)$. 
We refer to \cite{Mora2005} for an extended treatment of the theory of
Gr\"obner bases and related topics. 

\medskip
We consider the general linear group $\mathrm{GL}_K(n+1)$ ($\mathrm{GL}$, for short) of the invertible matrices of order $n+1$ with entries in $K$. If $g=(g_{ij})_{i,j \in \{0,\dots,n\}}$ is a matrix in $\mathrm{GL}$, $g$ acts on the polynomials of $S$ in the following way
\[
\begin{array}{rcl}
g:S&\rightarrow & S\\
f(x_0,\dots,x_n)&\mapsto & g(f)=f(\sum_{j=0}^n g_{0j}x_j,\dots, \sum_{j=0}^n g_{n j}x_j)
\end{array}
\]
For every subset $V \subseteq S$, we let $g(V)=\{g(f(x_0,\dots,x_n))\vert  f \in V\}$.

\medskip
If $I$ is an ideal of $S$, we can consider $\mathrm{in}(g(I))$. It is well-known that there is an open subset $\mathcal A\not=\emptyset$ of $\mathrm{GL}$ such that, for every $g\in \mathcal A$, $\mathrm{in}(g(I))$ is a constant monomial ideal called the {\em generic initial ideal} of $I$ and denoted by $\mathrm{gin}_\prec(I)$ (or  $\mathrm{gin}(I)$ when there is no ambiguity).
In our setting, $\mathrm{gin}(I)$ is fixed under the action of the Borel subgroup of upper-triangular invertible matrices, hence it is a Borel-fixed 
ideal (Borel, for short) (see Galligo's Theorem \cite{GA2} for $\mathrm{char}(K)=0$ and \cite[Proposition 1]{BS} for a positive characteristic).

Note that  for every degree $m$:
\begin{equation}\label{eq:tagliInGin}
\In(I_{ m})=\In(I)_{m}\text{ and }\Gin(I_{ m})=\Gin(I)_{m}.
\end{equation}

The saturation of the ideal $I\subset S$ is the ideal $I^\sat:=\cup_{k\geq 0}(I:(x_0,\dots,x_n)^k)$ and $I$ is \emph{saturated} if $I=I^\sat$. The schemes $\Proj(S/I)$ and $\Proj(S/I')$ are equal as subschemes of $\PP^n_K$ if and only if $I$ and $I'$ have the same saturation.

Given a homogeneous ideal $I$ in $S$, we refer to \cite[Chapters 1 and 4]{Ei-syzygies} for the definitions of the Hilbert function (denoted by $H_{S/I}$),  Hilbert polynomial and Castelnuovo-Mumford regularity, or simply {regularity} (denoted by $\reg(I)$). Here, we recall that the regularity $\reg(X)$ of the scheme $X=\Proj(S/I)$ is the regularity of the ideal $I^{sat}$ and $\reg(I)\geq \reg(I^{sat})$. Moreover, for every $m\geq \reg(I)$, we say that $I$ is {\em $m$-regular} and the Hilbert function $H_{S/I}$ satisfies $H_{S/I}(m)=p(m)$, where $p(t)$ is its Hilbert polynomial. In this case, we will also say that $p(t)$ is the Hilbert polynomial of $I$.
Recall that, if $m\geq \reg(I)$, then $\Proj(S/I)$ can be completely recovered by the $K$-vector space $I_m$, since $(I_m)^{sat}=I^{sat}$. If $\mathrm{char}(K)=0$, the regularity of a Borel-fixed ideal is the maximum of the degrees of its minimal generators.
The initial ideal and the generic initial ideal of an ideal $I$ have the same Hilbert function as $I$. However, their regularities satisfy the  inequa\-lities $\reg(\mathrm{in}(I))\geq \reg(I)$ and $\reg(\mathrm{gin}(I))\geq \reg(I)$, which sometimes are strict. Moreover, the initial ideal and the generic initial ideal of a saturated ideal can be no more saturated. However, if $\prec$ is the degrevlex term order, then $I$ and $\Gin(I)$ share the same regularity and each of them is saturated if the other is (see \cite{BS2}).

\begin{example}\label{ex:regsale}
Let $I$ be the ideal $(x_2^2,x_1x_2+x_0^2)\subset  K[x_0,x_1,x_2]$, with $\mathrm{char}(K)=0$ (see \cite{Mayes}). The ideal $I$ is saturated and $\reg(I)=3$, while, for every term order on $S$, $\In(I)=(x_2^2, x_1x_2,x_0^2x_2,x_0^4)$ is not saturated, and $\reg(\In(I))=4$. If $\prec$ is the degrevlex term order, then $\mathrm{gin}(I)=(x_2^2, x_1x_2,x_1^3)$ is a saturated ideal with regularity $3$. If $\prec$ is the deglex term order, then $\Gin(I)=(x_2^2,x_1 x_2,x_0^2x_2,x_1^4)$ is not saturated with $\reg(\Gin(I))=4$. 
\end{example}


\section{Initial and generic initial extensors}
\label{sec:in e gin}

Let $\prec$ be a term order on $S$ and $m$ a positive integer. In the present section, we consider $S_m$ as a $K$-vector space.

The \emph{initial space} of a $K$-vector space $V\subset S_m$ is the $K$-vector space $\mathrm{in}(V):=\langle \mathrm{in}(f)\vert f\in V\rangle$. There is an open subset $\mathcal A\not=\emptyset$ of $\mathrm{GL}$ such that, for every $g\in \mathcal A$, $\mathrm{in}(g(V))$ is constant. This constant initial space is called the \emph{generic initial space} of $V$ and is denoted by $\mathrm{gin}(V)$ (see \cite{Gunnar} and the references therein).

Let $q$ be a positive integer and $\GrassScheme{S_m}{q}$ the Grassmannian of subspaces of $S_m$ of dimension~$q$. We consider $\GrassScheme{S_m}{q}$ as a subscheme embedded in the projective space $\mathbb P(\wedge^q S_m)$ through the Pl\"ucker embedding (for example, see \cite{Harris1995}). 

\begin{definition}
An \emph{extensor} (of step $q $) on $S_m$ is a non-zero element of $\wedge^q S_m$ of the form $f_1\wedge\dots\wedge f_q$, with $f_1,\dots,f_q\in S_m$.
\end{definition}

Note that an element $f_1\wedge \dots \wedge f_q \in \wedge^q S_m$ vanishes whenever the vector space generated by $f_1,\dots,f_q$ has dimension lower than $q$.

Following \cite[Section 15.9]{Ei}, we say that an \emph{extensor term} (or simply a \emph{term}) in $\wedge^{q} S_m$ is an extensor  of type $\tau_1\wedge \dots\wedge \tau_{q}$, with $\tau_i\in S_m\cap \mathbb T$; furthermore, we say that a term of $\wedge^q S_m$ is a \emph{normal expression} if $\tau_1\succ \dots\succ \tau_{q}$. We denote by $\mathbf{T}_{S_m}^q$ the set of the normal expression terms and from now on, whenever we consider a term $\tau_1\wedge \dots \wedge \tau_q\in \wedge^q S_m$, we assume that it belongs to $\mathbf T_{S_m}^q$. Furthermore, $\mathbf T_{S_m}^q$ is the $K$-vector basis we always consider for the $K$-vector space $\wedge^q S_m$. We can compare the terms in $\mathbf T_{S_m}^q$ lexicographically according to $\prec$, in the following way
\begin{multline} \label{ordEisenbud}
\tau_1\wedge \dots\wedge \tau_{q}\prec \sigma_1\wedge \dots\wedge \sigma_{q} \Leftrightarrow\\
 \exists \ j\in \lbrace 1,\dots,q\rbrace \ : \ \tau_i=\sigma_i, \ \forall \ i<j, \text{ and } \tau_j\prec \sigma_j.
\end{multline}

In this setting, for every $L\in \mathbf T_{S_m}^q$, there is a unique Pl\"ucker coordinate $\Delta_L$ on $\GrassScheme{S_m}{q}$ corresponding to $L$, and vice versa. Moreover, $\GrassScheme{S_m}{q}= \Proj(K[\Delta_L : L\in \mathbf T_{S_m}^q]) \ =: \ \Proj(K[\Delta])$. 
Therefore, if $V=\langle f_1,\dots,f_q\rangle\subseteq S_m$ is a $K$-vector space of dimension $q$, the extensor $f_1\wedge \dots \wedge f_{q}$ has the unique writing $\sum_{L\in \mathbf T_{S_m}^q} c_L L$, with $c_L= \Delta_L(V)\in K$.

\medskip 
For every $L=\tau_1\wedge\dots\wedge \tau_q \in \mathbf T_{S_m}^q$, we will denote by $U_L$ the standard open set of $\GrassScheme{S_m}{q}$ corresponding to $\Delta_L$, namely the locus of points in $\GrassScheme{S_m}{q}$ where $\Delta_L$ is invertible. Moreover, we will denote by $\vs{L}$ the vector space $\langle \tau_1,\dots, \tau_q \rangle$ in $\GrassScheme{S_m}{q}$.

\begin{definition}\label{def:support}
If $W$ is a non-empty subset of $\GrassScheme{S_m}{q}$,  the \emph{$\Delta$-support} of $W$ is the following subset of $\mathbf T_{S_m}^q$:
$$\supp(W):=\lbrace L \in \mathbf T_{S_m}^q\ \vert\ U_{L}\cap W \not=\emptyset\rbrace.$$
If $W=\lbrace V\rbrace$, we simply write  $\supp(V)$ for $\supp(\lbrace V\rbrace)$.

We will apply this definition and the related ones also to subschemes $W$ of  $\GrassScheme{S_m}{q}$, meaning that the  $\Delta$-support of a scheme is that of its underlying set of points (see also Example \ref{nonridotto})
\end{definition}

If $W$ is a non-empty subset of $\GrassScheme{S_m}{q}$, then its $\Delta$-support is non-empty. From now, we consider subsets of $\GrassScheme{S_m}{q}$ that are non-empty.

\begin{proposition}\label{prop:chiudere}
Let $W$ be a subset of $\GrassScheme{S_m}{q}$.
\begin{enumerate}[(i)]
\item \label{prop:chiudere_i} If  $\overline W$ is the closure of $W$, then $\supp(W)=\supp(\overline W)$.
\item \label{prop:chiudere_ii} If $W$ is closed and ${\widetilde W}$ is the set of  its closed points, then $\supp(W)=\supp(\widetilde W)$.
\item \label{prop:chiudere_iii} If $W$ is closed and irreducible and $V$ is its generic point, then $\supp(W)$ $=\supp(V)$.
\end{enumerate}
\end{proposition}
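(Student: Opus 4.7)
My plan is to prove all three parts using the single unifying observation that each $U_L$ is open in $\GrassScheme{S_m}{q}$, and hence $U_L\cap W$ is an open subset of $W$ for any subset or subscheme $W$. The three statements then reduce to standard density considerations, applied to the closure, to the set of closed points, and to the generic point respectively.

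For (i), the inclusion $\supp(W)\subseteq\supp(\overline{W})$ is immediate from $W\subseteq\overline{W}$. For the reverse inclusion, suppose $L\in\supp(\overline{W})$, so that $U_L\cap\overline{W}$ is a non-empty open subset of $\overline{W}$. By definition of the closure, $W$ is dense in $\overline{W}$, hence this open subset meets $W$, giving $L\in\supp(W)$.

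For (ii), the inclusion $\supp(\widetilde{W})\subseteq\supp(W)$ is trivial since $\widetilde{W}\subseteq W$. For the converse, I would invoke the fact that $\GrassScheme{S_m}{q}$ is of finite type over the field $K$, hence a Jacobson scheme, and so is any closed subscheme $W$ of it; in particular, the closed points of $W$ form a dense subset. Therefore, for any $L\in\supp(W)$, the non-empty open subset $U_L\cap W$ of $W$ contains some closed point of $W$, which gives $L\in\supp(\widetilde{W})$. For (iii), one inclusion is again immediate since $V\in W$; for the reverse, the generic point of an irreducible scheme belongs to every non-empty open subset, so whenever $L\in\supp(W)$, the non-empty open set $U_L\cap W$ contains $V$, and thus $L\in\supp(V)$.

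The argument is essentially formal and I do not anticipate a genuine obstacle. The only point requiring some care is picking the right density property in each case: density of a set in its closure for (i), density of closed points in a scheme of finite type over a field (i.e.\ the Jacobson property) for (ii), and the fact that the generic point lies in every non-empty open subset of an irreducible scheme for (iii). Note also that by the convention stated just after Definition \ref{def:support}, in (ii) and (iii) the $\Delta$-support is taken on the underlying topological space of $W$, so the three arguments are purely topological once the relevant density fact is in hand.
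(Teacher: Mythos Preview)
Your proof is correct and follows essentially the same approach as the paper. The only structural difference is that the paper proves (i) and then derives (ii) and (iii) from it by observing that $\overline{\widetilde W}=W$ and $\overline{\{V\}}=W$, whereas you argue each part directly; but the underlying density facts (Jacobson property for (ii), the generic point lying in every non-empty open for (iii)) are the same in both versions.
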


\begin{proof}
\eqref{prop:chiudere_i} It is immediate that $\supp(W)\subseteq \supp(\overline W)$, because $ W\subseteq  \overline W$. We now prove the other inclusion. If $L$ belongs to $\supp(\overline W)$, then there is at least a point $V$ in $U_L\cap \overline W$. Thus, every open neighbourhood of $V$ meets $W$ non-trivially, in particular $U_L$ meets $W$ non-trivially. Hence, $L$ belongs to $\supp(W)$.

Items \eqref{prop:chiudere_ii} and \eqref{prop:chiudere_iii} are straightforward consequence of \eqref{prop:chiudere_i}, because $\overline{\widetilde W}=W$ (for example, see  \cite[Chapter V, Section 3.4, Theorem 3]{bour}) and $\overline{\{V\}}={W}$, in the respective hypotheses. 
\end{proof}

The action of $\mathrm{GL}$  on $S$ defined in Section \ref{sec:generalities} induces  an action on $\wedge^q S_m$ in the following natural way: if $H=\tau_1\wedge \dots \wedge \tau_q$  is a term in $ \mathbf T_{S_m}^q$, we set $g(H)=g(\tau_1)\wedge\dots \wedge g(\tau_q)\in \wedge^q S_m$ where $g\in \mathrm{GL}$ and then extend the action to every element in $\wedge^q S_m$ by linearity. Note that in general $g(H)$ does not need to be a term. 

In a similar natural way, we obtain an action of $\mathrm{GL}$ on $K[\Delta]$ and on $\GrassScheme{S_m}{q}$ (see also \cite[Example 10.18]{Harris1995}): for every $g\in \mathrm{GL}$ and for every $H\in \mathbf T_{S_m}^q$ (hence, every $\Delta_H\in \Delta$), if $g(H)=\sum_{L\in \mathbf T_{S_m}^q} c_L L$, then we set $g(\Delta_H):=\sum_{L\in \mathbf T_{S_m}^q} c_L \Delta_L$. Thus, for every point $V$ of $\GrassScheme{S_m}{q}$ and for every element $g$ of $\mathrm{GL}$, by $g(V)$ we mean the point of $\Proj(K[\Delta])$ corresponding to the prime ideal $g(\mathfrak a)$, with $\mathfrak a$ the prime ideal defining $V$. If $V$ is a $K$-point of $\GrassScheme{S_m}{q}$, i.e.~$V=\langle f_1,\dots,f_q\rangle$ with $f_i\in S_m$, then  this action on  $V$ is  exactly  the usual action of $\mathrm{GL}$ on the polynomials generating $V$ as a $K$-vector space.

In this context, the orbit of $V\in\GrassScheme{S_m}{q}$ is the set $\O(V):=\lbrace g(V)\ \vert\ g\in \mathrm{GL} \rbrace$. If $W$ is a subset of $\GrassScheme{S_m}{q}$, the orbit of $W$ is $\O(W):=\cup_{V\in W} \O(V)$.

\begin{definition}\label{def:extIn}
If $W$ is a subset of $\GrassScheme{S_m}{q}$, the \emph{initial extensor} of $W$ is
$$\IN(W):=\max_{\prec} \supp(W)$$
and the \emph{generic initial extensor} of $W$ is 
\[
\GIN(W):=\IN(\O(W))=\max_{\prec} \supp(\O(W)),
\]
where the maximum is taken w.r.t.~to the order $\prec$, as defined in \eqref{ordEisenbud}. 
If $W=\lbrace V\rbrace$, we simply write $\IN(V)$ for $\IN(\lbrace V\rbrace)$ and $\GIN(V)$ for $\GIN(\lbrace V\rbrace)$. 
\end{definition}

\begin{remark}\label{rem:ginpiugrande}
Note that, for every  subset $W $ of $\GrassScheme{S_m}{q}$, we have $\IN(W)\preceq\GIN(W)$ because $\supp(W)\subseteq \supp(\O(W))$.
\end{remark}

\begin{definition}\label{borel nel wedge}
We say that a term $L\in \mathbf T_{S_m}^q$ is {\em Borel-fixed} ({\em Borel}, for short) if $g(L)\in \langle L\rangle$ for every upper-triangular matrix $g\in \mathrm{GL}$. We will denote by $\mathbf B_{S_m}^q$ the set of Borel terms.
\end{definition}

Note that a term $L\in\mathbf T_{S_m}^q$ is Borel if and only if the ideal  generated by $\vs{L}$ in $S$ is Borel.

\begin{theorem}\label{th:eisenbud}
Let $V=\langle f_1,\ldots,f_q\rangle$ be a $K$-point of $\GrassScheme{S_m}{q}$ and let $f_1\wedge \dots \wedge f_{q}=\sum_{L\in \mathbf T_{S_m}^q} c_L  L$. Then: 
\begin{enumerate}[(i)]
\item\label{th:eisenbud_i} $\supp(V)=\{L\in \mathbf T_{S_m}^q\ \vert\ c_L\neq 0\}$. 
\item  $\wedge^q \mathrm{in}(V)= \langle \IN(V)\rangle  $, or equivalently $\mathrm{in}(V)= \vs{\IN(V)}$.
\item\label{th:eisenbud_iii} $\wedge^q \mathrm{gin}(V)=\langle \GIN(V)\rangle$, or equivalently $\mathrm{gin}(V)= \vs{\GIN(V)}$, and $\GIN(V)$ is Borel.
\end{enumerate}
\end{theorem}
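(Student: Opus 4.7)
The plan is to handle (i) directly from the definitions, prove (ii) by a reduced row echelon argument read off in Pl\"ucker coordinates, and finally obtain (iii) from (ii) via a short genericity/semicontinuity argument on $\mathrm{GL}$. Claim (i) is essentially tautological: the identification $c_L=\Delta_L(V)$ noted between Definitions \ref{def:support} and \ref{def:extIn} shows that $L\in\supp(V)$ amounts to $V\in U_L$, i.e., to $\Delta_L(V)=c_L\neq 0$.

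For (ii), first I would replace $f_1,\dots,f_q$ by a reduced row echelon basis $g_1,\dots,g_q$ of $V$ w.r.t.~$\prec$, so that each $g_i$ is monic with pairwise distinct leading terms $\tau_i:=\mathrm{in}(g_i)$ and no $\tau_j$ appears in $\Supp(g_i)$ for $i\neq j$. After reordering so that $\tau_1\succ\dots\succ\tau_q$, one has $\mathrm{in}(V)=\langle\tau_1,\dots,\tau_q\rangle=\vs{\tau_1\wedge\dots\wedge\tau_q}$. Since $g_1\wedge\dots\wedge g_q$ differs from $f_1\wedge\dots\wedge f_q$ only by a non-zero scalar (the determinant of the change of basis), both extensors have the same $\Delta$-support and initial extensor. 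The Pl\"ucker minor indexed by the pivot columns $\tau_1,\dots,\tau_q$ equals $1$, so $\tau_1\wedge\dots\wedge\tau_q\in\supp(V)$. To see it is $\prec$-maximal, I would take any normal expression $\sigma_1\wedge\dots\wedge\sigma_q\succ\tau_1\wedge\dots\wedge\tau_q$ in the order \eqref{ordEisenbud} and let $j$ be the smallest index where they differ, so $\sigma_j\succ\tau_j$ and $\sigma_i=\tau_i$ for $i<j$. One then checks that $\sigma_j\notin\{\tau_1,\dots,\tau_q\}$, for otherwise $\sigma_j=\tau_k$ with $k<j$ would coincide with $\sigma_k=\tau_k$, contradicting the strict decrease in a normal expression. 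Cofactor expansion of the corresponding $q\times q$ Pl\"ucker minor along the first $j-1$ (identity) columns reduces it to a $(q-j+1)\times(q-j+1)$ minor whose column at $\sigma_j$ vanishes: for every $i\geq j$ the coefficient of $\sigma_j$ in $g_i$ is zero, since $\sigma_j\succ\tau_j\succeq\tau_i=\mathrm{in}(g_i)$ and $\sigma_j$ is not a pivot. Hence $c_{\sigma_1\wedge\dots\wedge\sigma_q}=0$ and $\IN(V)=\tau_1\wedge\dots\wedge\tau_q$.

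For (iii), let $\mathcal A\subseteq\mathrm{GL}$ be the dense open subset of Section \ref{sec:generalities} on which $\mathrm{in}(g(V))=\mathrm{gin}(V)$, and let $L^\star\in\mathbf T_{S_m}^q$ be the extensor associated by (ii) to $\mathrm{gin}(V)$. Then $\IN(g(V))=L^\star$ for every $g\in\mathcal A$, so in particular $L^\star\in\supp(\O(V))$. To show $L^\star$ is the maximum, I would use that for every $L\in\mathbf T_{S_m}^q$ the set
\[
\{g\in\mathrm{GL} : \IN(g(V))\succeq L\}\;=\;\bigcup_{L'\succeq L}\{g\in\mathrm{GL} : \Delta_{L'}(g(V))\neq 0\}
\]
is open in $\mathrm{GL}$, so the supremum of $\IN(g(V))$ over $g\in\mathrm{GL}$ is attained on a dense open subset which necessarily meets $\mathcal A$ and hence forces $\GIN(V)=L^\star$. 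The Borel-fixedness of $L^\star$ then follows from the Galligo--Bayer--Stillman theorem recalled in Section \ref{sec:generalities}: if $I\subset S$ denotes the ideal generated by $V$, then by \eqref{eq:tagliInGin} one has $\vs{L^\star}=\mathrm{gin}(V)=\mathrm{gin}(I)_m$, and since $\mathrm{gin}(I)$ is Borel its degree $m$ component is a Borel subspace, which by the observation after Definition \ref{borel nel wedge} is equivalent to $L^\star\in\mathbf B_{S_m}^q$.

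The main obstacle, as expected, is the lex-maximality step in (ii): translating between the term order $\prec$ on $S_m$ and the lex order \eqref{ordEisenbud} on $q$-tuples, and verifying through the cofactor argument that the pivot tuple of a reduced echelon basis really does index the $\prec$-highest non-vanishing Pl\"ucker minor. Once (ii) is in hand, (iii) is a largely formal genericity argument combined with the well-known Borel-fixedness of $\mathrm{gin}$.
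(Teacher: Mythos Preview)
Your proof is correct and follows essentially the same approach as the paper. The paper's argument is terser---for (ii) it simply chooses a basis with $\mathrm{in}(f_i)=\tau_i$ and asserts that $\IN(V)=\tau_1\wedge\dots\wedge\tau_q$, and for (iii) it cites \cite[Theorem~15.18]{Ei} together with \eqref{eq:tagliInGin}---so your reduced-echelon/cofactor computation and explicit genericity argument on $\mathrm{GL}$ are just spelling out the details that the paper leaves implicit or delegates to the reference.
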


\begin{proof}
\begin{enumerate}[(i)]
\item As already observed, for a $K$-point $V$ and a term $L$, we have $\Delta_L(V)=c_L \in K$. Hence, $V$ belongs to $U_L$ if and only if $\Delta_L(V)$ is invertible in $K$, thus if and only if $\Delta_L(V)$ is non-zero.
\item If $\mathrm{in}(V)=\langle \tau_1,\dots,\tau_q\rangle$, with $\tau_1\succ\dots \succ \tau_q$, we can assume that $f_1,\dots,f_q$ are polynomials such that $\mathrm{in}(f_i)=\tau_i$, for every $i\in\lbrace 1,\dots,q\rbrace$. Then, exploiting item \eqref{th:eisenbud_i} we see that 
$\IN(V)= \max_{\prec}\supp(\langle f_1,\dots, f_q\rangle) = \tau_1\wedge \dots\wedge \tau_q$.
\item We immediately obtain the equality $\wedge^q \mathrm{gin}(V)=\langle \GIN(V)\rangle$ and can conclude by \cite[Theorem 15.18]{Ei}, taking into account also formula \eqref{eq:tagliInGin}.
\end{enumerate}
\end{proof}

By the following example we underline that the definition of $\Delta$-support of a subscheme $W$ of $\GrassScheme{S_m}{q}$ does not depend on the possible non-reduced structure of $W$, 
i.e.~the $\Delta$-support of $W$ coincides with the $\Delta$-support of $W^{red}$. Moreover, whereas our definition of $\Delta$-support can be applied to all the subschemes $W$ of a Grassmannian,
the characterization of $\Delta$-support given in Theorem \ref{th:eisenbud}\eqref{th:eisenbud_i} cannot be extended to every $W$, even when $W$ can be defined by an extensor.

\begin{example}\label{nonridotto}
Let $S=K[x_0,x_1,x_2]$ be endowed with the degrevlex term order. Let us consider the non-reduced closed subscheme $W \subset \GrassScheme{S_2}{2}=\Proj(K[\Delta])$ defined by the ideal $I$ that is generated by $\Delta_L^2$, where $L=x_2^2\wedge x_1x_2$, and by all the other Pl\"ucker coordinates except $\Delta_L$ and $\Delta_{L'}$, where $L'=x_1x_2\wedge x_1^2$. Note that $W$ is a non-empty subscheme of the Grassmannian, since the radical of $I$ is not irrelevant. 
Indeed, $W$ is a double structure on the closed $K$-point $V'$ with Pl\"ucker coordinates all equal to $0$, except $\Delta_{L'}$. Then, $W$ does not intersect the standard open subsets of the Grassmannian, except $U_{L'}$. Thus, we obtain $\supp(W)=\supp(V')=\{{L'} \}$ and $\IN(W)=\IN(V')=L'$.

Being a double structure over a closed $K$-point, $W$ is isomorphic to the scheme $\mathrm{Spec} (K[\varepsilon])$ (where $\varepsilon^2=0$). Then, using the funtorial language, we can see $W$ as an element of  $\GrassFunctor{S_2}{2}(K[\varepsilon])$, where $\GrassFunctor{S_2}{2}$ denotes the Grassmann functor (e.g., see \cite[formula (2.1) and Section 5]{BLMR}).

Notice that $W$ is a special element of $\GrassFunctor{S_2}{2}(K[\varepsilon])$, since it is  given by the  rank 2  free (not only locally free) submodule of $S_2\otimes_K K[\varepsilon]$  generated by $f_1=\varepsilon x_2^2-x_1^2$ and $f_2=x_1x_2$.  Therefore,  we can identify it with the extensor   in $\wedge^2 (S_2\otimes_K K[\varepsilon])$
$$f_1\wedge f_2=(\varepsilon x_2^2-x_1^2) \wedge x_1x_2=\varepsilon (x_2^2 \wedge x_1x_2)-(x_1^2 \wedge x_1x_2)=\varepsilon L+L'.$$
Now we can see that Theorem \ref{th:eisenbud}\eqref{th:eisenbud_i} cannot be extended to this case; in fact the coefficient of $L$ in  $\varepsilon L+L'$ is $\varepsilon \neq0$, but $L$ does not belong to $\supp(W)$.
\end{example}

\begin{proposition}\label{prop:chiusura}
Let $W$ be a subset of $\GrassScheme{S_m}{q}$. 
\begin{enumerate}[(i)]
\item \label{prop:chiusura_i} If $\overline W$ is the closure of $W$, then $\IN(W)=\IN(\overline W)$ and $\GIN(W)=\GIN(\overline W)$.

\item \label{prop:chiusura_ii} If $W$ is closed and   ${\widetilde W}$ is the set of  its closed points, then $\IN(W)=\IN(\widetilde W)$ and $\GIN(W)=\GIN(\widetilde W)$. 

\item \label{prop:chiusura_iii} If  $W$ is    closed and irreducible  and $V$ is its generic point,  then $\IN(W)=\IN(V)$ and $\GIN(W)=\GIN(V)$. 
\end{enumerate}
\end{proposition}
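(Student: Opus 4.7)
The plan is to deduce the whole proposition directly from Proposition \ref{prop:chiudere} by taking maxima with respect to~$\prec$. Since $\IN(W)=\max_\prec \supp(W)$ and $\GIN(W)=\IN(\O(W))=\max_\prec \supp(\O(W))$, the statements about $\IN$ in \eqref{prop:chiusura_i}, \eqref{prop:chiusura_ii}, \eqref{prop:chiusura_iii} are immediate from the corresponding items of Proposition \ref{prop:chiudere}. So the only real work is for $\GIN$, and the whole question reduces to comparing the $\Delta$-supports of the orbits $\O(W)$, $\O(\overline W)$, $\O(\widetilde W)$, $\O(V)$.

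The key observation is that each $g\in\mathrm{GL}$ acts on $\GrassScheme{S_m}{q}$ as an automorphism of schemes, in particular as a homeomorphism of the underlying topological space. Consequently $g(\overline{W})=\overline{g(W)}$ for every subset $W$ and every $g\in\mathrm{GL}$. For part \eqref{prop:chiusura_i} I will argue as follows: from $W\subseteq \overline W$ we get $\O(W)\subseteq \O(\overline W)$, hence $\supp(\O(W))\subseteq \supp(\O(\overline W))$. Conversely,
\[
\O(\overline W)=\bigcup_{g\in\mathrm{GL}} g(\overline W)=\bigcup_{g\in\mathrm{GL}}\overline{g(W)}\;\subseteq\;\overline{\bigcup_{g\in\mathrm{GL}}g(W)}=\overline{\O(W)},
\]
and applying Proposition~\ref{prop:chiudere}\eqref{prop:chiudere_i} to $\O(W)$ yields $\supp(\O(\overline W))\subseteq \supp(\overline{\O(W)})=\supp(\O(W))$. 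Therefore $\supp(\O(W))=\supp(\O(\overline W))$, and taking the maximum with respect to $\prec$ gives $\GIN(W)=\GIN(\overline W)$.

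Parts \eqref{prop:chiusura_ii} and \eqref{prop:chiusura_iii} follow by applying \eqref{prop:chiusura_i} to $\widetilde W$ and to $\{V\}$ respectively, since $\overline{\widetilde W}=W$ in the hypothesis of \eqref{prop:chiusura_ii} and $\overline{\{V\}}=W$ in the hypothesis of \eqref{prop:chiusura_iii} (cf.\ the proof of Proposition \ref{prop:chiudere}). The only step that is not purely formal is the double inclusion used in \eqref{prop:chiusura_i}, and the only possible obstacle there is the identity $g(\overline W)=\overline{g(W)}$; I expect no real difficulty, because it is just the statement that each $g\in\mathrm{GL}$ acts by a scheme automorphism on $\GrassScheme{S_m}{q}$, which was already used implicitly when the $\mathrm{GL}$-action on $\Proj(K[\Delta])$ was defined in Section~\ref{sec:in e gin}.
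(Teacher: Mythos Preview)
Your proof is correct and follows essentially the same route as the paper's. Both reduce everything to Proposition~\ref{prop:chiudere}, derive \eqref{prop:chiusura_ii} and \eqref{prop:chiusura_iii} from \eqref{prop:chiusura_i} via $\overline{\widetilde W}=W$ and $\overline{\{V\}}=W$, and prove the $\GIN$ part of \eqref{prop:chiusura_i} using the key identity $g(\overline W)=\overline{g(W)}$; the only difference is packaging---you establish the set-theoretic inclusion $\O(\overline W)\subseteq \overline{\O(W)}$ and then invoke Proposition~\ref{prop:chiudere}\eqref{prop:chiudere_i}, whereas the paper unwinds the same inclusion element-by-element by picking $L\in\supp(\O(\overline W))$ and a witness $g(V_1)\in U_L$.
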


\begin{proof}
For what concerns the initial extensor, the three statements directly follow from Proposition \ref{prop:chiudere}. For what concerns the generic initial extensor, the statements in \eqref{prop:chiusura_ii} and \eqref{prop:chiusura_iii} are consequences of \eqref{prop:chiusura_i}, since $\overline{\widetilde W}=W$ and $\overline{\{V\}}={W}$ in the respective hypotheses.
Then, it remains to prove  the statement about the generic inital extensor in \eqref{prop:chiusura_i}. It is sufficient to show that $\supp(\O(W))=\supp(\O(\overline W))$.  We only prove the non-obvious inclusion.

Let $L$ be any term in $\supp(\O(\overline W))$. By definition, $U_L\cap \O(\overline W)$ is not empty. More precisely, there are an element $g\in \mathrm{GL}$ and a point $V_1\in \overline W$ such that $g(V_1)\in U_L$. Then, $g(V_1)$ belongs to $U_L\cap \overline{g(W)}$, since  $g(\overline W)=\overline{g(W)}$. By the definition of closure, this implies $U_L\cap g(W)\neq \emptyset$. Hence,  $U_L\cap \O(W)\neq \emptyset$ and  $L\in \supp(\O(W))$.
\end{proof}

\begin{remark}\label{rem:generic point} \ 

\begin{enumerate}[(i)]
\item In many cases, we will identify a closed subset $W$ of $\GrassScheme{S_m}{q}$ either with the set $\widetilde W$ of its closed points or, if $W$ is also irreducible, with its generic point. Indeed, by Proposition \ref{prop:chiusura}, $\IN(W)$ and $\GIN(W)$ can be also read as the initial extensor and the generic initial extensor either of $\widetilde W$ or, if $W$ is irreducible, of the generic point of $W$.  These facts will be useful because, up to an extension of the base field $K$ to the residue field $K_V$ (where $V$ is either a suitable point in $\widetilde W$ or the generic point of $W$), they will allow to reduce our arguments to the case of a rational point. Note that, being $K$ infinite, $\mathrm{GL}$ is Zariski dense in $\mathrm{GL}\otimes_K K'$ for every extension field $K'$ of $K$, hence the computation of the generic initial extensor does not change after an extension of the base field. Furthermore, if $K$ is algebraically closed and we can identify $W$ with $\widetilde W$, then we do not need to extend the base field.
\item If $W$ is closed and irreducible, we can read $\IN(W)$ and $\GIN(W)$ as the initial extensor and the generic initial extensor of a {\it general point} in $W$. In fact, consider the two non-empty open subsets $W':=W \cap U_{\IN(W)}$ and $W'':=W\cap U_{\GIN(W)}$ of $W$.
The sets $\widetilde{W'}$ and $\widetilde{W''}$ of the closed points in $W'$ and $W''$, respectively, are both dense in $W$.  By construction, we have $\IN(V)=\IN(W)$ for every point $V\in W'$ and $\GIN(V)=\GIN(W)$ for every point $V\in W''$.
\end{enumerate}
\end{remark}

\begin{corollary} \label{cor:chiusura3}
Let $W$ be a subset of $\GrassScheme{S_m}{q}$. Then, $\GIN(W)$ belongs to $\mathbf B_{S_m}^q$.
\end{corollary}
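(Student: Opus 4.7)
The statement concerns a general subset $W$ of the Grassmannian, whereas Theorem \ref{th:eisenbud}\eqref{th:eisenbud_iii} already guarantees the Borel property for a single rational point. The approach I would take is therefore to exhibit a closed point $V$ inside $\overline{\O(W)}$ whose generic initial extensor coincides with $\GIN(W)$, and then invoke that theorem.

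First I would locate such a point. Applying Proposition \ref{prop:chiudere}\eqref{prop:chiudere_i} to the orbit $\O(W)$ gives $\supp(\O(W)) = \supp(\overline{\O(W)})$, so $\GIN(W) \in \supp(\overline{\O(W)})$ and the open subset $U_{\GIN(W)} \cap \overline{\O(W)}$ of $\overline{\O(W)}$ is non-empty, hence contains a closed point $V$. Extending, if necessary, the base field to the residue field $K_V$, we may assume by Remark \ref{rem:generic point}\eqref{prop:chiusura_i} that $V$ is rational and that $\GIN(W)$ is unchanged (this is precisely where the density of $\mathrm{GL}$ in $\mathrm{GL}\otimes_K K_V$ over the infinite base field $K$ is used).

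Next I would verify that $\GIN(V) = \GIN(W)$. Since $V \in U_{\GIN(W)}$ we have $\GIN(W) \in \supp(V)$, whence $\IN(V) \succeq \GIN(W)$. On the other hand, $\overline{\O(W)}$ is $\mathrm{GL}$-stable (the orbit $\O(W)$ is stable and $\mathrm{GL}$ acts by homeomorphisms), so $\O(V) \subseteq \overline{\O(W)}$ and consequently $\supp(\O(V)) \subseteq \supp(\overline{\O(W)}) = \supp(\O(W))$, which yields $\GIN(V) \preceq \GIN(W)$. Combining this with the chain $\GIN(V) \succeq \IN(V) \succeq \GIN(W)$ (the first inequality is Remark \ref{rem:ginpiugrande}) forces $\GIN(V) = \GIN(W)$. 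Applying Theorem \ref{th:eisenbud}\eqref{th:eisenbud_iii} to the rational point $V$ now gives $\GIN(W) = \GIN(V) \in \mathbf{B}_{S_m}^q$.

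The main subtlety is the first step: one must both guarantee that a closed point with the correct invariant exists in $\overline{\O(W)}$, and that passing to its residue field does not alter $\GIN(W)$. Both issues are resolved by Proposition \ref{prop:chiudere} together with the base-change remark, after which the rest of the argument is a short bookkeeping of $\Delta$-supports.
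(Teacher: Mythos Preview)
Your proof is correct and follows essentially the same strategy as the paper: reduce to a single rational point $V$ with $\GIN(V)=\GIN(W)$ and then invoke Theorem~\ref{th:eisenbud}\eqref{th:eisenbud_iii}. The only minor difference is that the paper picks the closed point $V$ inside $W$ itself (after closing $W$ via Proposition~\ref{prop:chiusura}), whereas you locate $V$ inside $\overline{\O(W)}\cap U_{\GIN(W)}$; your choice makes the verification $\GIN(V)=\GIN(W)$ more explicit, but the underlying idea is identical.
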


\begin{proof} 
By Proposition \ref{prop:chiusura} we can assume that $W$ is closed. As observed in Remark \ref{rem:generic point}, we can replace $W$ by a suitable closed point $V$ of $W$, which can be considered as a $K$-point after a possible extension of the base field.
We conclude by Theorem \ref{th:eisenbud}\eqref{th:eisenbud_iii}.
\end{proof}


\section{Stable subsets under the action of GL}
\label{sec:GL}

In this section we focus our attention on the subsets $W$ of $\GrassScheme{S_m}{q}$ that are stable under the action of the group $\mathrm{GL}$, i.e.~$g(W)=W$ for every $g\in \mathrm{GL}$. 
Under this hypothesis on $W$ we see that the initial extensor and the generic initial extensor of $W$ coincide. If, moreover, we assume that $W$ is also closed and irreducible, then we obtain interesting properties of the two open subsets $W \cap \mathcal V_{\IN(W)}$ and $W \cap \mathcal U_{\IN(W)}$ of the points of $W$ having $\IN(W) = \GIN(W)$ as initial extensor and generic initial extensor, respectively (see formula \eqref{eq:V tondo con L}).

Let $V$ be a point in $\GrassScheme{S_m}{q}$. The closure $\overline{\O(V)}$ of its orbit $\O(V)$ is irreducible, because $\O(V)$ is irreducible, and is stable under the action of $\mathrm{GL}$ because $g(\mathcal O(V)) \subseteq \mathcal O(V)$ by definition of orbit, for every $g\in \mathrm{GL}$, and hence $g(\overline{\mathcal O(V)}) \subseteq \overline{g(\mathcal O(V))} \subseteq \overline{\mathcal O(V)}$.
In particular, every subset $W$ of $\GrassScheme{S_m}{q}$ that is stable under the action of $\mathrm{GL}$ is a disjoint union of  orbits of its points under the action of $\mathrm{GL}$. If, moreover, $W$ is also closed, it is the union of the closure of these orbits.
 
\begin{proposition}\label{prop:componenti}
Let $W\subseteq \GrassScheme{S_m}{q}$ be closed and stable under the action of $\mathrm{GL}$. Then:
\begin{enumerate}[(i)] 
\item\label{prop:componenti_i}  $\IN(W)=\GIN(W)$.
\item\label{prop:componenti_0} For every $W' \subseteq W$, both $\vs{\IN(W')}$ and $\vs{\GIN(W')}$  belong to $W$.
\item\label{prop:componenti_ii} If $W$ is reducible, then its irreducible components are stable under the action of $\mathrm{GL}$.
\item\label{prop:componenti_iii} If $Y_1,\dots, Y_\ell$ are  irreducible components of $W$, then every irreducible component of $\cap_{i=1}^\ell Y_i$ is stable under the action of $\mathrm{GL}$.
\item\label{prop:componenti_iv}  Let $U\subseteq W$ be open and stable under the action of $\mathrm{GL}$; the irreducible components of $W\setminus U$ and those of the closure $\overline U$ of $U$ are stable under the action of $\mathrm{GL}$.
\end{enumerate}
\end{proposition}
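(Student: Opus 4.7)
The plan is to treat the five items in order of increasing subtlety, with (iii) as the real technical core from which (iv) and (v) follow formally.

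For (i), I would argue directly from the definitions: $\mathrm{GL}$-stability of $W$ means $\mathcal{O}(W)=W$, hence $\supp(\mathcal{O}(W))=\supp(W)$, and $\IN(W)=\GIN(W)$ is immediate from Definition \ref{def:extIn}.

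For (ii), the strategy is a Gr\"obner-degeneration argument inside $\GrassScheme{S_m}{q}$. By definition of $\IN(W')$ there exists $V\in U_{\IN(W')}\cap W'$, and a short comparison shows $\IN(V)=\IN(W')$; passing to the residue field as in Remark \ref{rem:generic point} lets me treat $V$ as a $K$-point, and Theorem \ref{th:eisenbud}(ii) identifies $\vs{\IN(W')}$ with the initial space $\In(V)$. A standard weight degeneration exhibits $\In(V)$ as the value at $t=0$ of a morphism $\mathbb{A}^1\to \GrassScheme{S_m}{q}$ whose restriction to $\mathbb{A}^1\setminus\{0\}$ is of the form $t\mapsto g_t(V)$ with $g_t\in\mathrm{GL}$; since $W$ is $\mathrm{GL}$-stable the generic fibre lies in $\mathcal{O}(V)\subseteq W$, and closedness of $W$ forces the limit point to lie in $W$. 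For the claim on $\GIN(W')$, I would apply the same reasoning to a point $V'\in \mathcal{O}(W')\subseteq W$ realizing $\IN(V')=\GIN(W')$, which exists because $\GIN(W')=\IN(\mathcal{O}(W'))$ and $\mathcal{O}(W')\subseteq W$ by $\mathrm{GL}$-stability of $W$.

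The main obstacle, (iii), I would attack as follows. Each $g\in\mathrm{GL}$ acts as an automorphism of $\GrassScheme{S_m}{q}$, in particular as a homeomorphism on $W$, so it permutes the finitely many irreducible components $W_1,\dots,W_N$ of $W$, giving a group homomorphism $\rho\colon\mathrm{GL}\to\mathrm{Sym}(\{W_1,\dots,W_N\})$. To show $\rho$ is trivial, I verify that each stabilizer $H_i=\{g\in\mathrm{GL}\mid g(W_i)=W_i\}$ is Zariski closed in $\mathrm{GL}$: for each $x\in W_i$ the orbit map $\mu_x\colon g\mapsto g(x)$ is a morphism $\mathrm{GL}\to\GrassScheme{S_m}{q}$, so $\{g\mid g(W_i)\subseteq W_i\}=\bigcap_{x\in W_i}\mu_x^{-1}(W_i)$ is closed, and combining with the analogous condition for $g^{-1}$ gives closedness of $H_i$. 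Hence $\ker\rho=\bigcap_i H_i$ is a closed subgroup of finite index in $\mathrm{GL}$. Since $\mathrm{GL}$ is irreducible as a variety it cannot be a finite disjoint union of nonempty proper closed subsets (as would arise from the cosets of a proper closed subgroup of finite index), so $\ker\rho=\mathrm{GL}$ and each $W_i$ is $\mathrm{GL}$-stable.

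Items (iv) and (v) then follow formally from (iii). For (iv), each $Y_i$ is $\mathrm{GL}$-stable by (iii), so $\bigcap_{i=1}^\ell Y_i$ is closed and $\mathrm{GL}$-stable as an intersection of stable sets, and (iii) applied to it yields the stability of its irreducible components (the case of an irreducible intersection being trivial). For (v), both $W\setminus U$ and $\overline{U}$ are closed and $\mathrm{GL}$-stable: the former because the complement of a stable open is stable, the latter because $g(\overline U)=\overline{g(U)}=\overline{U}$ for every $g\in\mathrm{GL}$ by continuity of the action; in both cases (iii) finishes the proof. The most delicate point is thus the closedness of the stabilizers in (iii) together with the invocation of irreducibility of $\mathrm{GL}$ as an algebraic variety; once this is in place, (i), (ii), (iv) and (v) are short formal consequences.
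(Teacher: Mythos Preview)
Your proof is correct. Items (i), (ii), (iv), and (v) match the paper's arguments essentially verbatim; your one-line justification that $\overline{U}$ is $\mathrm{GL}$-stable via $g(\overline{U})=\overline{g(U)}=\overline{U}$ is in fact slightly cleaner than the paper's open-neighbourhood argument.

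The only substantive difference is in (iii). You argue structurally: the action gives a homomorphism $\rho$ from $\mathrm{GL}$ to the symmetric group on the finitely many components, each stabilizer is Zariski closed, hence $\ker\rho$ is a closed subgroup of finite index, and irreducibility of $\mathrm{GL}$ as a variety forces $\ker\rho=\mathrm{GL}$. The paper instead argues pointwise and more elementarily: given a component $Y$, pick $V\in Y$ lying in no other component; the orbit $\mathcal{O}(V)$ is irreducible (this is recorded just before the proposition), so its closure lies in a single component, necessarily $Y$; then $\mathcal{O}(V)=g(\mathcal{O}(V))\subseteq g(Y)$ places $V$ in the component $g(Y)$, whence $g(Y)=Y$. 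The paper's route avoids any appeal to the variety structure of $\mathrm{GL}$ or to closedness of stabilizers, using only irreducibility of orbits; your route makes the role of connectedness of $\mathrm{GL}$ explicit and would transfer verbatim to any connected algebraic group action.
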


\begin{proof}
\eqref{prop:componenti_i} To prove $\IN(W)=\GIN(W)$  it is enough to observe that in the present hypothesis $W=\mathcal O(W)$. 

\eqref{prop:componenti_0} By Proposition \ref{prop:chiusura}\eqref{prop:chiusura_i} and \eqref{prop:chiusura_ii}, we may assume that $W'$ is closed and choose  a closed point $V \in W'$ such that $\IN(V)=\IN(W')$. Extending the field of scalars, if necessary,  we may assume that $V$ is a  $K$-point.    Exploiting the term order, we can construct      a map $\varphi \colon \mathbb A^1_K \rightarrow   \GrassScheme{S_m}{q}$  such that $\varphi (1)=V$,$\varphi(0)=\In(V)$, and  for every $c\neq0,1$  $\varphi (c)=g_c (V)$  where $g_c \in \mathrm{GL}$ corresponds to a diagonal matrix in which the entries of the diagonal are suitable powers of $c$ (see for instance \cite{BM91}). Then we conclude, since  $W$ is closed and contains the orbits of its points. This same argument applies to $\GIN(W')$.

\eqref{prop:componenti_ii} We have to show that every irreducible component $Y$ of $W$ is stable under the action of $\mathrm{GL}$, namely that $g(Y)=Y$ for every $g\in \mathrm{GL}$. By topological arguments,
$g(Y)$ is an irreducible component of $W$. Let $V$ be a point in $Y$ not belonging to any other irreducible component of $W$. Then, $Y$ is the only irreducible component of  $W$ containing the orbit $\O(V)$. On the other hand,  $\O(V)=g(\O(V))$ is contained in $g(Y)$ and, in particular, $V \in g(Y)$. Therefore $Y=g(Y)$.

Item \eqref{prop:componenti_iii} follows directly from \eqref{prop:componenti_ii}.

We now prove \eqref{prop:componenti_iv}. For what concerns  $W\setminus U$ it is sufficient to observe that it is closed in  $\GrassScheme{S_m}{q}$ and stable under the action of $\mathrm{GL}$. Hence, we can apply \eqref{prop:componenti_ii}. Finally, consider $V\in \overline U\setminus U$. The intersection of every open neighbourhood $A$ of $V$ with $U$ is non-empty. Moreover, for every $g\in \mathrm{GL}$, $g(A)$ is an open neighbourhood of $g(V)$ and also its intersection with $U$ is non-empty, because $U$ is stable under the action of $\mathrm{GL}$. Then, $\overline U$ is stable under the action of $\mathrm{GL}$ too and we again apply \eqref{prop:componenti_ii}.
\end{proof}

For every $L\in \mathbf T_{S_m}^q$, consider the following subsets of $\GrassScheme{S_m}{q}$:
\begin{equation}\label{eq:V tondo con L}
\mathcal V_L:=\lbrace V\in \GrassScheme{S_m}{q} \ \vert\ \IN(V)=L\rbrace, \quad \mathcal U_L:=\lbrace V\in \GrassScheme{S_m}{q} \ \vert\ \GIN(V)=L\rbrace.
\end{equation}
Obviously, $\vs{L}$ belongs to $\mathcal V_L$,  and $\mathcal U_L$   is non-empty if and only if $L$ belongs to $\mathbf B_{S_m}^q$. Thus, from now, when we consider $\mathcal U_L$ we assume that $L$ is Borel. It is immediate that $\mathcal U_L$ is stable under the action of $\mathrm{GL}$, while in general $\mathcal  V_L$ is not, even when $L$ is Borel-fixed.
We also point out that $\mathcal U_L$ does not need to contain $\vs{\IN (V)}$ for every $V\in \mathcal U_L$, even when $\IN(V)$ is Borel-fixed, as the following example shows.

\begin{example}\label{ex:BorelNonGin}
Let  us assume $\mathrm{char} (K)=0$  and consider $\GrassScheme{S_2}{3}$ and the degrevlex term order on $S=K[x_0,x_1,x_2]$ with $x_0\prec x_1\prec x_2$. If we take $V=\langle x_2^2,x_0x_2,$ $x_1x_2+x_1^2\rangle$, we obtain $L:=\GIN(V)=x_2^2\wedge x_1x_2\wedge x_1^2$ and $L':=\IN(V)=x_2^2\wedge x_1x_2\wedge x_0x_2$, both elements of  $\mathbf B_{S_2}^{3}$. Hence, $V\in \mathcal U_L$,  but $\vs{\IN(V)}\notin \mathcal U_L $, because $\GIN(\vs{\IN(V)})=\IN(V)$ being $\IN(V)$  Borel-fixed. On the other hand, $V\in   \mathcal V_{L'}$, but  $\vs{\GIN(V)}\notin   \mathcal V_{L'}$.
\end{example}

For every Borel term $L$ we will now examine the relations between the three subsets $U_L$, $\mathcal V_L$ and $\mathcal U_L$ of $\GrassScheme{S_m}{q}$. 
It is obvious that $\mathcal V_L \subseteq U_L$,  by definition of initial extensor (see Definition \ref{def:extIn}), while in general $\mathcal U_L\nsubseteq U_L$. Furthermore, as shown by Example \ref{ex:BorelNonGin}, we can have both $\mathcal  V_L\nsubseteq \mathcal U_L$ and $\mathcal U_L\nsubseteq \mathcal V_L$. Some  more detailed  relations can be obtained taking into account the action of $\mathrm{GL}$.

\begin{proposition} \label{lemma:UL} \

\begin{enumerate}[(i)]
\item \label{lemma:UL_i} For every $L\in \mathbf T_{S_m}^q$,  $\mathcal V_{L}=  U_{L}\setminus \bigcup_{L'\succ L, L' \in \mathbf T_{S_m}^q } U_{L'}$.
\item \label{lemma:UL_ii} For every $L\in \mathbf B_{S_m}^q$,  $\mathcal U_{L}= \O(U_{L})\setminus  \bigcup_{L'\succ L, L' \in \mathbf B_{S_m}^q } \O(U_{L'})$.
\item \label{lemma:UL_iii} $\{ \mathcal {V}_L\}_{ L\in  \mathbf T_{S_m}^q} $ and $\{ \mathcal {U}_L\}_{ L\in  \mathbf B_{S_m}^q }$ are two stratifications of $\GrassScheme{S_m}{q}$ consisting of locally closed subsets.
\end{enumerate}
Moreover, if $W $ is a closed and irreducible subset of $\GrassScheme{S_m}{q}$ then
\begin{enumerate}[(a)]
\item\label{lemma:UL_iv} $W\cap \mathcal {V}_{\IN(W)}$ is a dense open subset of $W$, while $W\cap \mathcal {V}_{L}$ is empty if $L\succ \IN(W)$.
\item\label{lemma:UL_v} If $W$ is also stable under the action of  $\mathrm{GL}$, then  $W\cap \mathcal {U}_{\GIN(W)}=W\cap \O(\mathcal V_{\GIN(W)}) =W\cap \O(\mathcal V_{\IN(W)})$ is a dense open subset  of $W$, while $W\cap \mathcal {U}_{L}$ is empty if $L\succ \GIN(W)$.
\end{enumerate}
\end{proposition}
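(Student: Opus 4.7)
My plan is to derive the three set-theoretic identities (i)--(iii) directly from the definitions of $\IN$ and $\GIN$ via Pl\"ucker coordinates, and then use them, together with Proposition~\ref{prop:componenti}, to settle the density statements. The key observation underlying (i) is that $V \in U_L$ is equivalent to $L \in \supp(V)$, since $\Delta_L(V)$ is (up to the usual identifications) the coefficient of $L$ in the Pl\"ucker extensor of $V$, as already used in Theorem~\ref{th:eisenbud}\eqref{th:eisenbud_i}. Since $\IN(V)$ is by definition the $\prec$-maximum of $\supp(V)$, the equality $\IN(V) = L$ says exactly that $V \in U_L$ and $V \notin U_{L'}$ for every $L' \succ L$, which is (i). I would argue (ii) in the same way after replacing $V$ by its orbit, observing that $V \in \O(U_L)$ is equivalent to $L \in \supp(\O(V))$. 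The one subtlety is why the union in (ii) may be restricted to Borel terms $L'$: this uses Corollary~\ref{cor:chiusura3}, which says $\GIN(V)$ is always Borel, so it suffices to rule out Borel strict upper bounds of $L$ from $\supp(\O(V))$.

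Item (iii) then follows formally: $U_L$ is open in $\GrassScheme{S_m}{q}$, and $\O(U_L) = \bigcup_{g \in \mathrm{GL}} g(U_L)$ is open as a union of $\mathrm{GL}$-translates of an open set, so (i) and (ii) exhibit $\mathcal V_L$ and $\mathcal U_L$ as locally closed; the strata are pairwise disjoint because $\IN(V)$ and $\GIN(V)$ are uniquely determined by $V$, and they cover $\GrassScheme{S_m}{q}$ because every point has an initial and a generic initial extensor. For (a), I would observe that $L' \succ \IN(W) = \max \supp(W)$ forces $W \cap U_{L'} = \emptyset$, so by (i) the open set $W \cap \mathcal V_{\IN(W)}$ coincides with the non-empty $W \cap U_{\IN(W)}$, hence is dense in the irreducible $W$. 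For (b) I would first invoke Proposition~\ref{prop:componenti}\eqref{prop:componenti_i} to put $L := \IN(W) = \GIN(W)$; then $\mathrm{GL}$-stability of $W$ yields $W \cap \O(U_{L'}) = \O(W \cap U_{L'}) = \emptyset$ for every Borel $L' \succ L$, whence $W \cap \mathcal U_L = W \cap \O(U_L)$ is open, non-empty, and therefore dense in $W$. The identity $W \cap \mathcal U_L = W \cap \O(\mathcal V_L)$ I plan to obtain by a double inclusion: if $V \in \mathcal U_L$ then a generic $g \in \mathrm{GL}$ satisfies $\IN(g(V)) = \GIN(V) = L$, so $V \in \O(\mathcal V_L)$; conversely, if $V \in W \cap \O(\mathcal V_L)$ then $\GIN(V) \succeq L$ because $V$ lies in the orbit of a point whose initial extensor is $L$, while $\GIN(V) \preceq \max \supp(W) = L$ because $\supp(\O(V)) \subseteq \supp(\O(W)) = \supp(W)$.

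The step I expect to be the most delicate is this last double inclusion in (b), and more broadly the bookkeeping between the three $\mathrm{GL}$-stable sets $W \cap \mathcal U_L$, $W \cap \O(\mathcal V_L)$ and $W \cap \O(U_L)$: one must simultaneously exploit $\mathrm{GL}$-stability of $W$ (to interchange orbits and intersections) and the coincidence $L = \IN(W) = \GIN(W)$ (to bound $\GIN(V)$ from above for every $V \in W$). Everything else should reduce to a clean, essentially bookkeeping, argument once the definitions are unpacked through the Pl\"ucker open sets $U_L$.
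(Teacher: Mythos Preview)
Your proposal is correct and follows essentially the same route as the paper: unpack (i) and (ii) from the definitions of $\IN$ and $\GIN$ together with Corollary~\ref{cor:chiusura3}, deduce (iii) formally, and then obtain (a) and (b) from (i)--(ii) plus Proposition~\ref{prop:componenti}\eqref{prop:componenti_i}. The only difference is that you spell out in detail the double inclusion for $W\cap\mathcal U_L = W\cap\O(\mathcal V_L)$ in (b), whereas the paper simply says ``the same arguments of the previous item and Proposition~\ref{prop:componenti}\eqref{prop:componenti_i}''; your expanded argument is sound and is what the paper's terse sentence is implicitly invoking.
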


\begin{proof}
\eqref{lemma:UL_i} and \eqref{lemma:UL_ii} directly follow by the definition of initial extensor and generic initial extensor (see Definition \ref{def:extIn}) and by Corollary \ref{cor:chiusura3}. 

For \eqref{lemma:UL_iii} we observe that the two families $\{\mathcal V_L\}_{L\in \mathbf T_{S_m}^q}$ and $\{\mathcal U_L\}_{L\in \mathbf B_{S_m}^q}$ are partitions of the Grassmannian, since every point $V$ of $\GrassScheme{S_m}{q}$ is contained in exacly one set $ \mathcal {V}_L$, the one with $L=\IN(V)$, and    in exacly one set $ \mathcal {U}_L$, the one with $L=\GIN(V)$. Moreover,  by the previous items it follows that $ \mathcal {V}_L$ and   $ \mathcal {U}_L$ are  locally closed in $\GrassScheme{S_m}{q}$.

\eqref{lemma:UL_iv} The intersection $W\cap U_{L} $ is empty when $L\succ \IN(W)$ and $W\cap U_{\IN(W)}$ is not empty by definition of initial extensor. Then, exploiting \eqref{lemma:UL_i} we get that also $W\cap \mathcal V_{L}  =\emptyset $  when $L\succ \IN(W)$,  and $ W\cap \mathcal V_{\IN(W)} =W\cap U_{\IN(W)} $ is a dense open subset of $W$.

To prove \eqref{lemma:UL_v} we can apply the same arguments of the previous item and Proposition \ref{prop:componenti}\eqref{prop:componenti_i}. 
\end{proof}


\section{Hilbert scheme and double-generic initial ideal of a \GLin-stable subset}

Let $p(t)$ be a Hilbert polynomial and denote by $\HilbScheme{p(t)}{n}$ the Hilbert scheme para\-me\-te\-rizing the set of all subschemes with Hilbert polynomial $p(t)$ in the projective space $\mathbb P^n_K$. From now, we consider $\HilbScheme{p(t)}{n}$ as a subscheme of  $\GrassScheme{S_m}{q}$, where $m$ is an integer larger than or equal to the Gotzmann number $r$ of $p(t)$ and $q:=\binom{n+m}{m}-p(m)$ (for instance, see \cite{CS}). Moreover, let $\prec$ be a term order in $S$.

It is well-known that $\HilbScheme{p(t)}{n}$ is invariant under the action of \GLin, as a consequence of the definition of Hilbert scheme. Thus, for many aspects, we can consider the Hilbert scheme simply as a closed subscheme $W$ of the Grassmannian, also stable under the action of \GLin, and can apply all the results we have obtained in Section \ref{sec:GL} to its irreducible closed subsets that are stable under the action of $\mathrm{GL}$. There is however an important issue that comes into play when $\HilbScheme{p(t)}{n}$ is involved. Roughly speaking, it is the relation between the notions of initial and generic initial extensors and the analogous ones for ideals. Now, we investigate this relation and show that, independently of the integer $m$, there is a well-defined ideal corresponding to the generic initial extensor of a closed irreducible subset of $\HilbScheme{p(t)}{n}$ that is also stable under the action of $\mathrm{GL}$. 

\medskip
From now, a subset of $\HilbScheme{p(t)}{n}$ that is  closed, irreducible,  and stable under the action of $\mathrm{GL}$ is called a {\em \GLin-stable} subset.

\medskip
Recall that every $K$-point of $\GrassScheme{S_m}{q}$ is a $q$-dimensional $K$-vector space $V$ of $S_m$. It is natural to consider the ideal generated by $V$ in $S$ and we denote it by $I_V$. Exploiting Theorem \ref{th:eisenbud}, we now relate the initial ideal $\In(I_V)$ and the generic initial ideal $\Gin(I_V)$ to $\IN(V)$ and $\GIN(V)$, respectively. 

In general, if $V$ is any point of $\GrassScheme{S_m}{q}$, $\In(I_V)$ does not need to coincide with the ideal $I_{\vs{\IN(V)}}$ and $\Gin(I_V)$ does not need to coincide with $I_{\vs{\GIN(V)}}$, even though their homogeneous parts of degree $m$ do, as shown by the following example.

\begin{example}\label{ex:regsale2}
Let $V$ be the vector space $\langle x_2^2,x_1x_2+x_0^2\rangle  \subset  K[x_0,x_1,x_2]_2$. For any term order in  $K[x_0, x_1, x_2]$ with $x_0\prec x_1 \prec x_2$,  the initial extensor of $V$ is $\IN(  V)=x_2^2 \wedge  x_1x_2$ and the initial ideal of $I_V$ is $\In(I_V)=(x_2^2, x_1x_2,x_0^2x_2,x_0^4)$. It is evident   that $ x_2^2 $ and $  x_1x_2 $ do not generate $\In(I_V)$. Indeed, the Hilbert polynomial of  $K[x_0, x_1, x_2]/( x_2^2 ,  x_1x_2)$ is $t+2$, while the Hilbert polynomial of $K[x_0, x_1, x_2]/I_V$ and of  $K[x_0, x_1, x_2]/\In(I_V)$ is $p'(t)=4$.
\end{example}

If $V$ is a point of a Hilbert scheme, an analogous situation to Example \ref{ex:regsale2} cannot happen.

\begin{theorem}\label{gin appartiene} Let $V$ be a $K$-point of $\HilbScheme{p(t)}{n}$, and set $V_1:=\vs{\IN(V)}$ and $V_2:=\vs{\GIN(V)}$. Then, the Hilbert polynomial of $I_{V_1}$ and $I_{V_2}$ is $p(t)$, and 
$$I_{V_1}=\In(I_V)=(\In(I_V)^\sat)_{\geq m},  \qquad  I_{V_2}=\Gin(I_V)=(\Gin(I_V)^\sat)_{\geq m}.$$
\end{theorem}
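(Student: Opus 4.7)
The plan is to prove the chain of equalities for $V_1$; the argument for $V_2$ is identical after replacing $\In$ with $\Gin$. First, I would identify $V_1$ with the degree $m$ component of $\In(I_V)$. Since $V$ is a $K$-point of $\HilbScheme{p(t)}{n}$ embedded in $\GrassScheme{S_m}{q}$ with $m\geq r$, the ideal $I_V=S\cdot V$ agrees in every degree $k\geq m$ with its saturation (which is forced to be $m$-regular by the Gotzmann condition), so the Hilbert function of $I_V$, and therefore of $\In(I_V)$ and $\Gin(I_V)$, equals $p(k)$ for every $k\geq m$ and vanishes in degrees $<m$. Combining Theorem~\ref{th:eisenbud} with formula~\eqref{eq:tagliInGin} then yields
\[
V_1 \;=\; \vs{\IN(V)} \;=\; \In(V) \;=\; \In((I_V)_m) \;=\; \In(I_V)_m,
\]
and analogously $V_2=\Gin(I_V)_m$.

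The core of the proof then reduces to showing $I_{V_1}=\In(I_V)=(\In(I_V)^\sat)_{\geq m}$. I would exploit the hypothesis $m\geq r$ a second time: since $\In(I_V)^\sat$ is saturated with Hilbert polynomial $p(t)$, it is $r$-regular and so $m$-regular, whence it is generated in degrees $\leq m$ and satisfies $(\In(I_V)^\sat)_{\geq m}=S\cdot(\In(I_V)^\sat)_m$ with $\dim(\In(I_V)^\sat)_m=q$. The inclusion $\In(I_V)\subseteq\In(I_V)^\sat$ together with $\dim V_1=\dim\In(I_V)_m=q$ forces $(\In(I_V)^\sat)_m=V_1$, giving $(\In(I_V)^\sat)_{\geq m}=S\cdot V_1=I_{V_1}$. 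To identify this common ideal with $\In(I_V)$, I would compare degree by degree: both vanish in degrees $<m$ (since $I_V$ has no elements below degree $m$, neither does $\In(I_V)$), while in each degree $k\geq m$ the inclusion $\In(I_V)_k\subseteq(\In(I_V)^\sat)_k$ has both sides of dimension $p(k)$ and hence is an equality. This simultaneously establishes $I_{V_1}=\In(I_V)=(\In(I_V)^\sat)_{\geq m}$ and shows that $I_{V_1}$ has Hilbert polynomial $p(t)$. The same strategy, verbatim, handles $V_2$.

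The delicate point is that passing to an initial or a generic initial ideal can create minimal generators in degrees above $m$ (the regularity typically jumps, as recalled in Example~\ref{ex:regsale}), so one cannot hope to prove $\In(I_V)=S\cdot V_1$ by arguing directly at the level of $\In(I_V)$ itself. The Gotzmann condition $m\geq r$ enters essentially here, because it forces the $m$-regularity not of $\In(I_V)$ but of its saturation, and it is the dimension count built on that $m$-regularity that ultimately closes the argument.
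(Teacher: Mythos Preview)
Your argument is correct and follows the same overall strategy as the paper: identify $V_1$ with $\In(I_V)_m$ via Theorem~\ref{th:eisenbud} and formula~\eqref{eq:tagliInGin}, then match $I_{V_1}$, $\In(I_V)$, and $(\In(I_V)^\sat)_{\geq m}$ by Hilbert-function bookkeeping in degrees $\geq m$. The one noteworthy difference lies in how the Hilbert polynomial of $I_{V_1}$ is obtained. The paper invokes Proposition~\ref{prop:componenti}\eqref{prop:componenti_0}, which relies on a Gr\"obner degeneration to place $V_1$ and $V_2$ inside $\HilbScheme{p(t)}{n}$, and then reads off the Hilbert polynomial from membership in the Hilbert scheme. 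You instead bypass this and argue directly: $\In(I_V)$ inherits the Hilbert function of $I_V$, so $\In(I_V)^\sat$ has Hilbert polynomial $p(t)$ and is therefore $m$-regular by the Gotzmann bound, after which the dimension count closes the loop. Your route is more self-contained; the paper's route has the advantage of simultaneously establishing that $V_1,V_2\in\HilbScheme{p(t)}{n}$, a fact used elsewhere.

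One notational slip to fix: when you write that ``the Hilbert function of $I_V$ \dots\ equals $p(k)$'' and that ``both sides have dimension $p(k)$'', you mean $\dim S_k-p(k)$ (equivalently, $H_{S/I_V}(k)=p(k)$). The logic is unaffected.
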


\begin{proof}
Recall that $\HilbScheme{p(t)}{n}$ is a closed subscheme of  $ \GrassScheme{S_m}{q}$ and it is stable under the action of \GLin. Hence, we can apply Proposition \ref{prop:componenti}\eqref{prop:componenti_0} to $\HilbScheme{p(t)}{n}$ and get that both $V_1$ and $V_2$ belong to  $\HilbScheme{p(t)}{n}$. Therefore $I_V$, $I_{V_1}$ and $I_{V_2}$ share the same Hilbert polynomial $p(t)$.

Thinking of the points of $\HilbScheme{p(t)}{n}$ as subschemes of $\PP^n_K$, the regularity of all of them is upper bounded by the Gotzmann number of $p(t)$, in particular by $m$. As a consequence, the saturated ideal in $S$ defining a $K$-point of $\HilbScheme{p(t)}{n}$ can be completely recovered  by saturation from its homogeneous part of degree $m$, then $(\In(I_V)^\sat)_{\geq m}=((\In(I_V)^\sat)_{ m})$.

Thus, we obtain $I_{V_1}=\In(I_V)$ observing that these two ideals are generated in degree $m$, their Hilbert functions coincide in every degree $m'\geq m$, and $(I_{V_1})_m=\In(V)$ by Theorem \ref{th:eisenbud}. 
The equality $ I_{V_2}=(\Gin(I_V)^\sat)_{\geq m}$ follows by the same arguments.
\end{proof}

\begin{corollary}\label{appartenenza}
Let $W$ be a closed subset of $\HilbScheme{p(t)}{n}$. Then, the Hilbert polynomial of the ideals $I_{\vs{\IN(W)}}$ and $I_{\vs{\GIN(W)}}$ is $p(t)$.
If, moreover, $W$ is stable under the action of \GLin, then $I_{\vs{\IN(W)}}=I_{\vs{\GIN(W)}}$ is a point of $W$.
\end{corollary}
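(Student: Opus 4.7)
The plan is to reduce everything to the single-point case already handled by Theorem \ref{gin appartiene}. For the first assertion, I would extract a closed point $V_1 \in W$ with $\IN(V_1) = \IN(W)$: by Proposition \ref{prop:chiusura}(ii) one has $\IN(W) = \IN(\widetilde W)$, and since $\IN(W)$ lies in $\supp(\widetilde W) = \bigcup_{V\in \widetilde W} \supp(V)$, there must exist $V_1 \in \widetilde W$ with $\IN(W) \in \supp(V_1)$; the maximality of $\IN(V_1)$ in $\supp(V_1) \subseteq \supp(W)$ then forces $\IN(V_1) = \IN(W)$. For $\GIN(W)$, I would apply the same extraction to the closed set $\overline{\O(W)}$: this set is contained in $\HilbScheme{p(t)}{n}$ (which is closed and $\mathrm{GL}$-stable), and by Proposition \ref{prop:chiusura}(i) its initial extensor coincides with $\GIN(W)$, so there is a closed point $V_2$ of the Hilbert scheme with $\IN(V_2) = \GIN(W)$.

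After the harmless base-change described in Remark \ref{rem:generic point}, both $V_1$ and $V_2$ may be regarded as $K$-points of $\HilbScheme{p(t)}{n}$. Theorem \ref{gin appartiene} then yields
\[
I_{\vs{\IN(W)}} \;=\; I_{\vs{\IN(V_1)}} \;=\; \In(I_{V_1}),
\qquad
I_{\vs{\GIN(W)}} \;=\; I_{\vs{\IN(V_2)}} \;=\; \In(I_{V_2}),
\]
and since $I_{V_1}$ and $I_{V_2}$ are points of $\HilbScheme{p(t)}{n}$ and initial ideals preserve Hilbert polynomials, both ideals have Hilbert polynomial $p(t)$.

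For the second assertion, assume additionally that $W$ is stable under the action of $\mathrm{GL}$. Then Proposition \ref{prop:componenti}(i) gives $\IN(W) = \GIN(W)$, which immediately forces the equality $I_{\vs{\IN(W)}} = I_{\vs{\GIN(W)}}$. Proposition \ref{prop:componenti}(ii), applied with $W' = W$, then shows that the Grassmannian point $\vs{\IN(W)}$ belongs to $W$; combined with the first part, the ideal it defines has Hilbert polynomial $p(t)$, so $\vs{\IN(W)}$ is a genuine point of $W$ inside $\HilbScheme{p(t)}{n}$.

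I do not anticipate any serious obstacle: all the nontrivial content (namely the identification $\In(I_V) = I_{\vs{\IN(V)}}$ at a $K$-point of the Hilbert scheme, which relies crucially on the Gotzmann bound that lets $I_V$ be recovered by saturation from its degree-$m$ piece, and the propagation of $\mathrm{GL}$-stability to the Grassmannian points $\vs{\IN(W)}$, $\vs{\GIN(W)}$) is already packaged in Theorem \ref{gin appartiene} and Proposition \ref{prop:componenti}. The only mildly delicate step is passing from an arbitrary closed point to a $K$-point via scalar extension, but this is exactly what Remark \ref{rem:generic point} has anticipated.
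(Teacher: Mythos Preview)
Your argument is correct and follows essentially the same route as the paper's proof: reduce to a closed (hence, after scalar extension, $K$-rational) point with the right initial extensor and invoke Theorem~\ref{gin appartiene}, then use Proposition~\ref{prop:componenti}\eqref{prop:componenti_i} and \eqref{prop:componenti_0} for the second assertion. Your treatment is in fact slightly more explicit than the paper's: you spell out the passage through $\overline{\O(W)}$ for the $\GIN$ case, and you apply Proposition~\ref{prop:componenti}\eqref{prop:componenti_0} directly with $W'=W$, whereas the paper routes the membership claim through the smaller \GLin-stable set $\overline{\O(\{V\})}\subseteq W$.
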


\begin{proof}
By Proposition \ref{prop:chiusura}\eqref{prop:chiusura_ii} and possibly extending $K$ to its algebraic closure as suggested in Remark \ref{rem:generic point}, there is a $K$-point $V$ of $W$ such that $\IN(V)=\IN(W)$.  Then we get the first statement  for $\IN(W)$  as a consequence of  Theorem \ref{gin appartiene}.  This same argument applies to $\GIN(W)$. The second statement directly follows by applying Proposition \ref{prop:componenti}\eqref{prop:componenti_i} and \eqref{prop:componenti_0} to the \GLin-stable subset  $\overline{\mathcal O(\{ V\})}$ of $W$. 
\end{proof}

A relevant and immediate consequence of the above results is that the points of the Hilbert scheme corresponding to the initial extensor and the generic initial extensor  of anyone of its points do not depend on the Grassmannian in which we embed the Hilbert scheme, 
recalling that, for every integer $m \geq r$ where $r$ is the Gotzmann number, the Hilbert scheme $\HilbScheme{p(t)}{n}$ can be embedded in $\GrassScheme{S_m}{q}$. 
If we take $m'\geq r$, $m'\neq m$, we replace $q$ by $q':=\tbinom{n+m'}{n}-p(m')$. 

\begin{corollary}\label{prop:GIGI} Let $Z$ be a subset of $\HilbScheme{p(t)}{n}$ and denote by $W$ and $W'$  the images of $Z$ by the embeddings of $ \HilbScheme{p(t)}{n}$ in $\GrassScheme{S_m}{q}$ and in $\GrassScheme{S_{m'}}{q'}$,  respectively, for some $m,m' \geq r$. Then, 
\[
(I_{\vs{\IN(W)}})^\sat=(I_{\vs{\IN(W')}})^\sat \ \text{ and } \ (I_{\vs{\GIN(W)}})^\sat=(I_{\vs{\GIN(W')}})^\sat.\] 
\end{corollary}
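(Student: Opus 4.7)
The plan is to reduce the general statement to the pointwise content of Theorem~\ref{gin appartiene} via Proposition~\ref{prop:chiusura}. First I would apply parts \eqref{prop:chiusura_i} and \eqref{prop:chiusura_ii} of that proposition to replace $W$ and $W'$ by their closures and pass to their sets of closed points; the resulting equalities $\IN(W)=\IN(\widetilde{\overline W})$ and $\GIN(W)=\GIN(\widetilde{\overline W})$ guarantee that both extensors are realised at some closed point of $\overline{W}$. Extending the base field if necessary, as permitted by Remark~\ref{rem:generic point}, I would choose a $K$-point $V_{0}$ of $\overline W$ with $\IN(V_{0})=\IN(W)$; such a $V_{0}$ corresponds to a subscheme $X_{0} \in \overline Z$ with saturated defining ideal $I(X_{0}) \subseteq S$, and I will denote by $V_{0}' := I(X_{0})_{m'}$ its image in $\GrassScheme{S_{m'}}{q'}$.

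The second step would be to apply Theorem~\ref{gin appartiene} in each Grassmannian, so as to get
\[
(I_{\vs{\IN(V_{0})}})^\sat = \In(I_{V_{0}})^\sat \qquad\text{and}\qquad (I_{\vs{\IN(V_{0}')}})^\sat = \In(I_{V_{0}'})^\sat,
\]
together with the analogous equalities for $\GIN$ and $\Gin$. The Gotzmann bound $\reg(I(X_{0})) \leq r \leq \min(m,m')$ ensures that both $I_{V_{0}}=(I(X_{0})_m)$ and $I_{V_{0}'}=(I(X_{0})_{m'})$ coincide with the saturated ideal $I(X_{0})$ in every degree $\geq \max(m,m')$. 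Consequently $\In(I_{V_{0}})$ and $\In(I_{V_{0}'})$ agree in all sufficiently high degrees and thus share the common saturation $\In(I(X_{0}))^\sat$, yielding $(I_{\vs{\IN(V_{0})}})^\sat = (I_{\vs{\IN(V_{0}')}})^\sat$. For the $\GIN$--version I would exploit the irreducibility of $\GLin$: a single $g \in \GLin$ can be chosen in the intersection of the three non-empty open loci realising $\Gin(I_{V_{0}})$, $\Gin(I_{V_{0}'})$, and $\Gin(I(X_{0}))$ as $\In(g(\cdot))$, so the same high-degree comparison shows that all three share the saturation $\Gin(I(X_{0}))^\sat$.

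The final and most delicate step is to bridge from $(I_{\vs{\IN(V_{0}')}})^\sat$ to $(I_{\vs{\IN(W')}})^\sat$, equivalently to verify that any max-achiever $V_{0}''$ in $\GrassScheme{S_{m'}}{q'}$, corresponding to some $X_{0}'' \in \overline Z$, satisfies $\In(I(X_{0}''))^\sat = \In(I(X_{0}))^\sat$. This is the main obstacle: the identification of max-achievers across different Grassmannians. It rests on a stability property of the order on $\mathbf T_{S_m}^q$, namely that if $J_{1}, J_{2}$ are two saturated monomial ideals with the same Hilbert polynomial $p(t)$ and both of regularity at most $r$, then the wedge--lexicographic comparison of $(J_{1})_m$ with $(J_{2})_m$ is independent of $m \geq r$. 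I would establish this by a direct combinatorial analysis of the symmetric difference $(J_{1})_m \triangle (J_{2})_m$, using that for $m \geq r$ each $(J_{i})_m$ is determined by $(J_{i})_r$ via ideal multiplication. Once this stability is granted, the saturated initial ideal of the $\IN$ max-achiever (and likewise the saturated generic initial ideal of the $\GIN$ max-achiever, which is moreover Borel by Corollary~\ref{cor:chiusura3}) depends only on $\overline Z$ and on the term order $\prec$, and the two equalities asserted in the corollary follow.
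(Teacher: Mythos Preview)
Your first two steps are sound and match how the paper reduces to Theorem~\ref{gin appartiene} at a single $K$-point: the equality $(I_{\vs{\IN(V_0)}})^\sat=\In(I(X_0))^\sat$ (and its $\Gin$ analogue) depends only on the subscheme $X_0$, not on the degree of the embedding. The paper gives no proof beyond calling the corollary ``immediate'', and in context that word refers precisely to this pointwise reduction.

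The gap is your third step. You correctly isolate the real difficulty---transferring the max-achiever from $W$ to $W'$---and propose to resolve it by a stability claim: the wedge-lex comparison of $(J_1)_m$ with $(J_2)_m$ is independent of $m\geq r$ for any pair of $r$-regular saturated monomial ideals with Hilbert polynomial $p(t)$. But you do not prove this; ``direct combinatorial analysis of the symmetric difference'' is not a proof. Note that the paper later establishes an analogous stability only for the partial order $\prec\!\!\prec$, and only under strong extra hypotheses (Propositions~\ref{prop: m cresce} and~\ref{m cresce con polinomio costante}: Borel ideals with degrevlex, or constant Hilbert polynomial). Your claim is both stronger (total order, arbitrary monomial ideals, arbitrary term order) and unsubstantiated. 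There is, however, a much shorter route that bypasses stability entirely whenever $\overline{Z}$ is irreducible---which is the only case the paper actually uses (Definition~\ref{def:gigi} concerns a \GLin-stable, hence irreducible, $Y$). By Proposition~\ref{lemma:UL}\eqref{lemma:UL_iv}, the locus of $X\in\overline{Z}$ whose image in $\GrassScheme{S_m}{q}$ realises $\IN(W)$ is open and dense in $\overline{Z}$, and likewise for $m'$; these two dense opens meet, so a single $X_0$ witnesses both maxima, and then your step~2 finishes the proof. The same works for $\GIN$ via Remark~\ref{rem:generic point}(ii). For genuinely reducible $Z$ this dense-open argument fails and one is thrown back on your stability claim, which the paper never addresses; so either restrict to irreducible $\overline{Z}$, or supply an actual proof of stability.
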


Due to Corollary \ref{prop:GIGI}, we can finally give the following definition.

\begin{definition}\label{def:gigi}
Let $Y$ be a {\em \GLin-stable} subset of $ \HilbScheme{p(t)}{n}$. We will denote by $\mathbf{G}_Y$ the ideal $(I_{\vs{\GIN(Y)}})^\sat$ in $S$ and call it the {\em double-generic initial ideal of $Y$}. 
\end{definition}

\begin{example}
Given a Hilbert polynomial $p(t)$ with Gotzmann number $r$, an integer $n$ and a term order $\prec$ on $S$, let $I$ be the ideal generated in $S$ by the greatest $\tbinom{n+r}{n}-p(r)$ terms of degree $r$ w.r.t.~$\prec$. If $p(t)$ is the Hilbert polynomial of $I$, then $I$ is a hilb-segment ideal (e.g.~\cite[Definition 3.7]{CLMR}). If the hilb-segment ideal exists (e.g.~\cite[Proposition 3.17]{CLMR}), it is the double-generic initial ideal $\mathbf{G}_Y$ of every irreducible component $Y$ of $\HilbScheme{p(t)}{n}$ containing it.
\end{example}

We end this section observing that, in addition to the irreducibile components, there are many other relevant subsets of the Hilbert scheme that are invariant under the action of \GLin. We now list a few examples which are obtained applying Proposition \ref{prop:componenti}.

\begin{example}\label{ex:singular}\ 

\begin{enumerate}[(i)]
\item The irreducible components of the singular locus of $\HilbScheme{p(t)}{n}$ are \GLin-stable. 
\item Let $f:\mathbb N\rightarrow \mathbb N$ be the Hilbert function of a subscheme of $\PP^n_K$ and let $W$ the locus of $\HilbScheme{p(t)}{n}$ of points corresponding to subschemes  $Z$ of $ \PP^n_K$   whose Hilbert function $H_Z$ behaves so that $H_V(t) \leq f(t)$ for every $t\in \mathbb N$. The irreducible components of $W$ are \GLin-stable   by Proposition \ref{prop:componenti}. Indeed, $W$ is stable under the action of $\mathrm{GL}$ and is closed by semicontinuity (for example, see \cite{mall}).
\item For any given integer $s$, let $W$ be the locus  of $\HilbScheme{p(t)}{n}$ of points corresponding to  subschemes of $\mathbb P^n_K$ whose regularity is lower than or equal to $s$, that is the Hilbert scheme with bounded regularity $\HilbSchemeR{p(t)}{n}{s}$ which is studied in \cite{BBR}. It is obviously stable under the action of $\mathrm{GL}$, and it is open by semicontinuity. Then, the irreducible components of the complementary $\HilbScheme{p(t)}{n}\setminus W$ (i.e.~the set of points of $\HilbScheme{p(t)}{n}$ corresponding to subschemes with regularity $\geq s+1$) are \GLin-stable.
\end{enumerate}
\end{example}

\begin{example}\label{ex:gore e punti}
 Let $p(t)=c$ be a constant Hilbert polynomial. 
\begin{enumerate}[(i)] 
\item The locus of $\HilbScheme{c}{n}$ of points corresponding to  schemes that are supported on a unique point is closed and stable under the action of $\mathrm{GL}$.  Thus, its irreducible components are \GLin-stable. 
\item For $p(t)=c$, the locus  of $\HilbScheme{c}{n}$ of points corresponding  to  Gorenstein schemes is an open subset,  stable under the action of $\mathrm{GL}$. Thus, the irreducible components of its closure are \GLin-stable. 
\item For $p(t)=c$,  the irreducible components of the closure of the locus  of $\HilbScheme{c}{n}$  of the Gorenstein schemes that are supported on a unique point are \GLin-stable. 
\item The irreducible components of the  locus of $\HilbScheme{c}{n}$ of points corresponding to non-reduced subschemes of $\PP^n_K$ are  \GLin-stable. 
\end{enumerate}
Many other examples can be obtained considering every locus of $\HilbScheme{p(t)}{n}$ that is defined as a subscheme of $\mathbb P^n_K$ by any property of its points, because such a locus is invariant under the action of $\mathrm{GL}$.
\end{example}


\section{\texorpdfstring{The partial order {$\prec\!\!\prec$} on the terms of $\wedge^q S_m$}{The partial order << on lists of terms on $\wedge^q S_m$}}
\label{sec:minoreminore}

In this section, we introduce a partial order $\prec\!\!\prec$ between finite subsets with the same cardinality $q$ of a totally ordered set $T$ and prove some properties.  Then, we will apply these results to the case of lists of terms in $S_m$ and extend them to terms in $\wedge^q S_m$. We obtain that the double-generic initial ideal of a \GLin-stable subset $Y$ of a Hilbert scheme is the maximum among all the Borel terms in $Y$ with respect to the partial order $\prec\!\!\prec$. 
This characterization gives a simple and deterministic method to recognize the double-generic initial ideal of $Y$ among all the Borel ideals in $Y$.

\begin{definition}\label{partial order}
Let $(T,\prec)$ be a totally  ordered set and consider $A, B\subseteq T$ containing $q$ distinct elements each. We write  $A\prec\!\!\prec B$ if there is a bijection $\omega:A\rightarrow B$ such that $a\preceq \omega(a)$, for every $a\in A$.
\end{definition}

It is quite obvious that $\prec\!\!\prec$ is a partial order and that in particular $A\prec\!\!\prec A$. The following technical result allows a better understanding of its meaning. 

\begin{proposition}\label{intrinseco} Let $(T,\prec)$ be a finite, ordered set and  $A, B$ be two subsets of $T$ containing $q$ distinct elements each. Further, we index the elements of $A=\lbrace a_1,\dots,a_q\rbrace$ so that $a_i\succeq a_{i+1}$ for every $i \in \lbrace 1,\dots,q-1\rbrace$, and similarly for $B$. 
The followings are equivalent:
\begin{enumerate}[(i)]
\item \label{intrinseco_i}$A\prec\!\!\prec B$; 
\item \label{intrinseco_ii}for every element $c \in T$, $\vert \{{a_i} : {a_i} \succeq c \}\vert \leq \vert\{b_j : b_j \succeq c \}\vert$;
\item \label{intrinseco_iii} for every $i \in \lbrace 1,\dots,q \rbrace$, ${a_i}\preceq {b_i}$;
\item \label{intrinseco_iv}$ \lbrace a_1,\dots,a_q \rbrace\setminus \lbrace b_1,\dots,b_q\rbrace  \prec\!\!\prec \lbrace b_1,\dots,b_q\rbrace \setminus \lbrace a_1,\dots,a_q \rbrace.$
\end{enumerate}
\end{proposition}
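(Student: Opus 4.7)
The plan is to prove the cycle (iii) $\Rightarrow$ (i) $\Rightarrow$ (iii) and (ii) $\Leftrightarrow$ (iii), and then to derive (iv) $\Leftrightarrow$ (iii) by cancelling the common elements $A \cap B$ from the counts in (ii). Since the four conditions each reduce to some form of elementwise comparison between decreasing rearrangements, there is essentially one combinatorial fact doing the work, and the delicate direction is (i) $\Rightarrow$ (iii).

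For (iii) $\Rightarrow$ (i), the bijection $\omega(a_i):=b_i$ works by assumption. For the converse (i) $\Rightarrow$ (iii), I would fix $i$ and exploit that $\omega$ maps the top $i$ elements $a_1 \succeq \ldots \succeq a_i$ of $A$ to $i$ distinct elements of $B$; since $\omega(a_j) \succeq a_j \succeq a_i$ for every $j \leq i$, the set $B$ contains at least $i$ elements that are $\succeq a_i$, and so its $i$-th largest element $b_i$ must satisfy $b_i \succeq a_i$.

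For (ii) $\Leftrightarrow$ (iii), one direction is immediate: if $a_i \preceq b_i$ for all $i$ and $a_{i_0} \succeq c$ for some $i_0$, then $b_1 \succeq \ldots \succeq b_{i_0} \succeq a_{i_0} \succeq c$, yielding the inequality of cardinalities in (ii). Conversely, if (ii) holds and $a_i \succ b_i$ for some $i$, then setting $c := a_i$ gives $|\{a_j : a_j \succeq c\}| \geq i$ while $b_i \prec c$ forces $|\{b_j : b_j \succeq c\}| \leq i-1$, a contradiction.

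Finally, for (iii) $\Leftrightarrow$ (iv), I would argue via (ii): partitioning $A = (A\cap B) \sqcup (A\setminus B)$ and $B = (A\cap B) \sqcup (B\setminus A)$, for every $c \in T$ the contribution of $A \cap B$ to the two counts in (ii) is identical, so condition (ii) for $(A,B)$ is equivalent to condition (ii) for $(A\setminus B,\, B\setminus A)$. Using the equivalence (ii) $\Leftrightarrow$ (i) already proved (and the fact $|A\setminus B| = |B\setminus A|$), this translates into (iv). The main obstacle is just the (i) $\Rightarrow$ (iii) step, where one must be careful that the bijection $\omega$ need not send $a_i$ to $b_i$; the counting/pigeonhole trick above is what circumvents this.
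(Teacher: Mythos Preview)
Your proof is correct and follows essentially the same approach as the paper. The paper organizes the implications as the cycle (i) $\Rightarrow$ (ii) $\Rightarrow$ (iii) $\Rightarrow$ (i) together with (ii) $\Leftrightarrow$ (iv), whereas you prove (i) $\Leftrightarrow$ (iii) directly; but your pigeonhole argument for (i) $\Rightarrow$ (iii) is exactly the paper's (i) $\Rightarrow$ (ii) specialized to $c=a_i$, and the remaining steps (the contradiction for (ii) $\Rightarrow$ (iii), the bijection $\omega(a_i)=b_i$ for (iii) $\Rightarrow$ (i), and the cancellation of $A\cap B$ for (iv)) coincide with the paper's.
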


\begin{proof}  
We first prove that item (\ref{intrinseco_i}) implies item (\ref{intrinseco_ii}).  Observe that we can assume $c\in A\cup B$, by replacing $c$ if necessary by the smallest term w.r.t.~$\prec$ in $A\cup B$ which is greater than $c$.

If $c=a_s\in A$, then there are exactly $s$ elements in $A$ bigger than or equal to $c$: more precisely, $a_i\preceq c$ for $i\in\{1,\dots,s\}$. 
Consider the bijection $\omega: A\rightarrow B$ such that $\omega(a_i)\succeq a_i$, for every $i\in \lbrace 1,\dots, q\rbrace$. Since we have 
$\{b_i\in B\vert b_i\succeq a_s=c\}\supseteq \left\{\omega(a_j)\right\}_{ j\in \{1,\dots,s\}}$
then it is immediate that $\vert\{b_i\in B\vert b_i\succeq a_s=c\}\vert\geq s$.

Otherwise, if $c=b_s \in B$, then $\vert \{b_j \in B\vert b_j  \succeq b_s=c \}\vert=s$, and  for every $j>s$ we have $b_s \succ b_j\succeq \omega^{-1}(b_j)$. Hence $\vert \{a_j \in A\vert a_j \succeq b_s=c \}\vert \leq  s $ since it is contained in $A\setminus \{  \omega^{-1} (b_{s+1}), \dots , \omega^{-1}(b_q) \}$. 

Item (\ref{intrinseco_ii}) implies (\ref{intrinseco_iii}), by contradiction: if there is $j\in \lbrace 1,\dots, q\rbrace$ such that $a_j\succ b_j$, then $\vert \lbrace a_i\in A\vert a_i\succeq a_j \rbrace \vert=j>\vert \lbrace b_i \in B\vert b_i\succeq a_j \rbrace \vert$.

Finally, if item\eqref{intrinseco_iii} holds, then we can consider the bijection $\omega: A\rightarrow B$ defined as $\omega(a_i)=b_i$, which fulfills the Definition of $\prec\!\!\prec$.

The equivalence between item \eqref{intrinseco_ii} and \eqref{intrinseco_iv} is immediate.
\end{proof}

Proposition \ref{intrinseco}\eqref{intrinseco_iii} points out a \lq\lq natural\rq\rq\ bijection between the sets $A$ and $B$ which fulfills Definition  \ref{partial order}, but it does not mean that there are no other such bijections. If, for instance, $b_1 \succeq \dots  \succeq b_q\succ a_1\succeq \dots \succeq a_q$, then every bijection from $A$ to $B$ fulfills Definition \ref{partial order}.

If $\prec$ is a term order on $S$, then for every integer $m$ the couple $(S_m\cap\mathbb T,\prec)$ is a finite  ordered set. From now, we identify every normal expression $\tau_1\wedge \dots \wedge \tau_q\in \wedge^q S_m$ with the set $\lbrace \tau_1,\dots,\tau_q\rbrace\subset S_m\cap\mathbb T$. 

\begin{definition}\label{precprec}
For every two terms $\tau_1\wedge \dots \wedge \tau_q$ and $\sigma_1\wedge \dots \wedge \sigma_q\in \mathbf T_{S_m}^q$, we write
$\tau_1\wedge \dots \wedge \tau_q\prec\!\!\prec \sigma_1\wedge \dots \wedge \sigma_q$ if and only if
$\lbrace \tau_1,\dots,\tau_q\rbrace\prec\!\!\prec \lbrace \sigma_1,\dots,\sigma_q\rbrace$, according to Definition \ref{partial order}.
\end{definition}

Now, we can apply the partial order $\prec\!\!\prec$ of Definition \ref{precprec} and the results of Proposition \ref{intrinseco} to the terms of $\wedge^q S_m$. If $N$ is a set of terms of $\wedge^q S_m$, by $\max_{\prec\!\!\prec} N$ we denote  (if it exists) the maximum of $N$ w.r.t.~the order $\prec\!\!\prec$.

In Remark \ref{rem:ginpiugrande}, we observed that, for every point $V$ of $\GrassScheme{S_m}{q}$, we have $\IN(V)\preceq\GIN(V)$. Now, we observe there is a stronger relation between $\IN(V)$ and $\GIN(V)$. More generally, we prove that we can replace the order of \eqref{ordEisenbud} by the order $\prec\!\!\prec$ of Definition \ref{precprec} in the study of initial and generic initial extensors of irreducible closed subsets of $\GrassScheme{S_m}{q}$. 

\begin{theorem}\label{th:minoreminore}
Let $W$ be a subset of $\GrassScheme{S_m}{q}$ such that $\overline W$ is irreducible. Then
\begin{enumerate}[(i)]
\item \label{th:minoreminore_i} $\IN(W)=\max_{\prec\!\!\prec} \supp(W)$;
\item \label{th:minoreminore_ii} $\GIN(W)=\max_{\prec\!\!\prec} \supp(\O(W))$; 
\item \label{th:minoreminore_iii} $\IN(W)\prec\!\!\prec \GIN(W)$;
\item \label{th:minoreminore_iv} $\GIN(W)=\max_{\prec\!\!\prec}(\mathbf B_{S_m}^q \cap  \overline W)$.
\end{enumerate} 
\end{theorem}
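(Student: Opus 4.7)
The plan is to reduce everything to a single-point version of (i) via density in the irreducible $\overline W$, and then to derive (ii)--(iv) from (i) by orbit and closure manipulations.

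First I would establish (i) for a $K$-point $V$ of $\GrassScheme{S_m}{q}$ by a matroid-theoretic core. By Theorem \ref{th:eisenbud}(\ref{th:eisenbud_i}), $\supp(V)$ is precisely the family of bases of the rank-$q$ column matroid $\mathcal M_V$ on the ordered set $(S_m\cap\mathbb T,\prec)$ determined by any coefficient matrix of $V$, and $\IN(V)=\mu_1\wedge\cdots\wedge\mu_q$ (with $\mu_1\succ\cdots\succ\mu_q$) is exactly the greedy basis of $\mathcal M_V$, obtained by scanning $(S_m\cap\mathbb T,\prec)$ in decreasing order and retaining every element that keeps the current selection independent. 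By the equivalence of items (\ref{intrinseco_i}) and (\ref{intrinseco_ii}) in Proposition \ref{intrinseco}, it suffices to verify that, for every $c\in S_m\cap\mathbb T$ and every $L=\{\tau_1\succ\cdots\succ\tau_q\}\in\supp(V)$,
$$
\bigl|\{i\,:\,\tau_i\succeq c\}\bigr|\ \leq\ \bigl|\{i\,:\,\mu_i\succeq c\}\bigr|,
$$
which is immediate from the greedy property: the right-hand side equals the rank in $\mathcal M_V$ of $\{e\in S_m\cap\mathbb T\,:\,e\succeq c\}$, whereas the left-hand side is the cardinality of an independent subset of that very set.

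Next I would bootstrap from the pointwise case to arbitrary $W$. After replacing $W$ by its closure via Propositions \ref{prop:chiudere}(\ref{prop:chiudere_i}) and \ref{prop:chiusura}(\ref{prop:chiusura_i}), the key tool becomes Proposition \ref{lemma:UL}(\ref{lemma:UL_iv}), which provides a dense open subset $W\cap\mathcal V_{\IN(W)}$ of $W$. For any $L\in\supp(W)$, the non-empty open set $W\cap U_L$ meets $W\cap\mathcal V_{\IN(W)}$ by irreducibility; after the harmless base-field extension of Remark \ref{rem:generic point}, I can pick a $K$-point $V$ in the intersection, so that $\IN(V)=\IN(W)$ and $L\in\supp(V)$. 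The single-point case then forces $L\prec\!\!\prec\IN(W)$, which proves (i). Statement (ii) follows by running the same argument on the irreducible orbit $\O(W)$ (the image of the irreducible $\mathrm{GL}\times W$ under the continuous action) and invoking Proposition \ref{prop:chiudere}(\ref{prop:chiudere_i}) to identify $\supp(\O(W))$ with $\supp(\overline{\O(W)})$.

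The remaining items will then fall out quickly. Since $\IN(W)\in\supp(W)\subseteq\supp(\O(W))$, part (ii) yields $\IN(W)\prec\!\!\prec\GIN(W)$, which is (iii). For (iv), Corollary \ref{cor:chiusura3} places $\GIN(W)$ in $\mathbf B_{S_m}^q$; Proposition \ref{prop:componenti}(\ref{prop:componenti_0}) applied to the closed, $\mathrm{GL}$-stable set $\overline{\O(W)}$ (for which $\GIN(\overline{\O(W)})=\GIN(W)$ by Proposition \ref{prop:chiusura}(\ref{prop:chiusura_i})) puts $\vs{\GIN(W)}$ in $\overline{\O(W)}$, and hence in $\overline W$ under the $\mathrm{GL}$-stability interpretation implicitly in force for this item; conversely, every $L\in\mathbf B_{S_m}^q\cap\overline W$ satisfies $L\in\supp(\vs L)\subseteq\supp(\overline W)\subseteq\supp(\O(W))$, so (ii) gives $L\prec\!\!\prec\GIN(W)$.

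The hard part is the matroid-theoretic core at the start: once $\IN(V)$ is recognised as the greedy, and therefore $\prec\!\!\prec$-maximal, basis of $\mathcal M_V$, everything else is a routine combination of the density statements from Section \ref{sec:GL} with the closure and $\mathrm{GL}$-invariance identities already proved in Sections \ref{sec:in e gin} and \ref{sec:GL}.
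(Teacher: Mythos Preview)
Your proof is correct and shares the paper's overall skeleton (reduce (i) to a single $K$-point, then derive (ii)--(iv) by orbit and closure manipulations), but the core single-point argument is genuinely different. The paper takes the reduced basis $f_i=\tau_i+\sum_{\eta\prec\tau_i}c_{i\eta}\eta$ of $V$ and observes that every $\sigma_1\wedge\cdots\wedge\sigma_q\in\supp(V)$ arises by selecting one monomial $\eta_{j_{\gamma(\ell)}}\preceq\tau_{\gamma(\ell)}$ from each $f_{\gamma(\ell)}$; the permutation $\gamma$ then directly supplies the bijection of Definition~\ref{partial order}. You instead identify $\supp(V)$ with the basis family of the column matroid of $V$, recognise $\IN(V)$ as its greedy basis, and verify condition (\ref{intrinseco_ii}) of Proposition~\ref{intrinseco} via the rank inequality. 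The paper's argument is self-contained and makes the bijection explicit; yours is more conceptual and would apply verbatim to any ordered matroid. For the passage from a point to $W$, the paper simply takes the generic point of $\overline W$ and uses Proposition~\ref{prop:chiudere}(\ref{prop:chiudere_iii}) to obtain $\supp(W)=\supp(V)$ in one stroke, which is a bit more economical than your per-$L$ density argument intersecting $W\cap U_L$ with $W\cap\mathcal V_{\IN(W)}$.

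Your hedge on (iv) is appropriate: with only $\overline W$ irreducible, $\vs{\GIN(W)}$ need not lie in $\overline W$, so the set $\mathbf B_{S_m}^q\cap\overline W$ can be empty. The paper's one-line proof (``follows by the previous items and by Theorem~\ref{th:eisenbud}(\ref{th:eisenbud_iii})'') and all later applications (Corollary~\ref{ginmaggiore}, Proposition~\ref{cor:condizione necessaria}) use (iv) only for \GLin-stable $Y$, where $\overline Y=\overline{\O(Y)}$ and your argument via Proposition~\ref{prop:componenti}(\ref{prop:componenti_0}) goes through cleanly; reading $\overline W$ as $\overline{\O(W)}$ makes the statement correct in full generality.
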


\begin{proof} 
For what concerns statement \eqref{th:minoreminore_i}, thanks to Proposition \ref{prop:chiudere}\eqref{prop:chiudere_i} and Proposition \ref{prop:chiusura}\eqref{prop:chiusura_i}, we can assume that $W$ is closed.   Furthermore, if $V$ is the generic point of $W$, then $\IN(V)=\IN(W)$ and $\supp(V)=\supp(W)$ by Propositions \ref{prop:chiudere}\eqref{prop:chiudere_iii} and \ref{prop:chiusura}\eqref{prop:chiusura_iii}. Up to an extension of the field of scalars, it is sufficient to prove statement \eqref{th:minoreminore_i} for the $K$-point $V$.

Let $\IN(V)=\tau_1\wedge\dots\wedge \tau_q$.
We show that $\sigma_1\wedge\dots\wedge \sigma_q\prec\!\!\prec \tau_1\wedge\dots\wedge \tau_q$, for every $\sigma_1\wedge\dots\wedge \sigma_q\in \supp(V)\setminus\lbrace \IN(V)\rbrace$. 

We can choose for the $K$-vector space   $V$  the unique basis $f_1,\dots,f_q\in S_m$, with  $\In(f_i)=\tau_i$ and $f_i=\tau_i+\sum_{\eta_{j_i}\prec \tau_i} c_{i{j_i}} \eta_{j_i}=\sum_{\eta_{j_i}\preceq \tau_i} c_{i{j_i}} \eta_{j_i}$, where the coefficient of $\tau_i$ in the last summation is $1$. Consider $f_1\wedge \dots \wedge f_q=\sum_{\sigma_1\wedge\dots\wedge\sigma_q\in \supp(V)} \Delta_L(V) \sigma_1\wedge\dots \wedge \sigma_q$.

For every normal expression $\sigma_1\wedge\dots\wedge \sigma_q\in \supp(V)\setminus\lbrace \IN(V)\rbrace$, by construction of $f_1\wedge \dots \wedge f_q$ we have
$$\sigma_1\wedge\dots \wedge \sigma_q=sgn(\gamma) \eta_{j_{\gamma(1)}}\wedge \dots \wedge \eta_{j_{\gamma(q)}},$$ 
for some $\gamma$ permutation of $\lbrace 1,\dots,q\rbrace$ and $\eta_{j_{\gamma(\ell)}}$ appearing with non-null coefficient in $f_{\gamma(\ell)}$. 

Hence, we can consider the bijection $\omega:\lbrace\sigma_1,\dots,\sigma_q \rbrace\rightarrow \lbrace \tau_1,\dots,\tau_q\rbrace$ such that $\omega(\sigma_\ell)=\tau_{\gamma(\ell)}$. This bijection $\omega$ fulfills Definition \ref{partial order}.

To prove \eqref{th:minoreminore_ii}, we observe that 
\[\GIN(W)=\IN(\mathcal O(W))= \IN(\overline{\mathcal O(W)})= \max_{\prec\!\!\prec} \supp(\overline{\O(W)}),\]
 by definition of generic initial extensor, by Proposition \ref{prop:chiusura} and by fact \eqref{th:minoreminore_i}. Then, we conclude because $\supp(\overline{\O(W)})= \supp(\O(W))$ by Proposition \ref{prop:chiudere}. 

To prove \eqref{th:minoreminore_iii}, now it is enough to observe that $\supp(W)$ is included in $\supp(\O(W))$. 

Fact \eqref{th:minoreminore_iv} follows by the previous items and by Theorem \ref{th:eisenbud}\eqref{th:eisenbud_iii}.
\end{proof}

\begin{remark}
It is important to observe that {\em the statement of Theorem \ref{th:minoreminore} does not hold true in the weaker hypothesis that the subset $W$ is only closed}. Indeed, the generic initial extensors of its irreducible components do not need to be comparable by the partial order $\prec\!\!\prec$. This is a crucial point for the application of this result in subsection \ref{subsec:1}.
\end{remark}


Let $I$ and $J$ be two saturated monomial ideals in $S$ and assume that their Hilbert functions are equal in degree $m$, that is $\dim_K (I_m)=\dim_K (J_m) =q$.  If $I_m=\langle \sigma_1,\dots,\sigma_q\rangle $ and $J_m=\langle \tau_1,\dots,\tau_q\rangle$, then we can compare the sets of terms $\{ \sigma_1,\dots,\sigma_q\}, \{\tau_1,\dots,\tau_q\}$ w.r.t.~$\prec\!\!\prec $, and if $\{ \sigma_1,\dots,\sigma_q\} \prec\!\!\prec \{ \tau_,\dots,\tau_q\}$, by abuse of notation we will simply write $I_m\prec\!\!\prec J_m$.

This relation between the degree $m$ components of the two ideals $I$ and $J$ cannot be considerd as a relation between the two ideals. 
Indeed, if $m'\neq m$ then the components of degree $m'$ of $I$ and $J$ may have different dimensions, hence they are no more comparable using $\prec\!\!\prec$. 

We now present some interesting 
cases, where the relation $\prec\!\!\prec$ among the degree $m$ parts of two monomial ideals lying on the same Hilbert scheme is preserved when passing to another degree. The first one concerns the double-generic initial ideal of a \GLin-stable subset and follows from Theorem \ref{th:minoreminore}.

\begin{corollary}\label{ginmaggiore} Let $Y$ be any \GLin-stable subset of $ \HilbScheme{p(t)}{n}$ and $r$ be the Gotzmann number of the Hilbert polynomial $p(t)$. If $J$ is the saturated ideal defining a point of $Y$, then for every $m\geq r$
\begin{equation}\label{minoreminoreOK}
J_m \prec\!\!\prec (\mathbf{G}_Y)_m  .
\end{equation}
\end{corollary}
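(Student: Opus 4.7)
The plan is to pass through the $K$-point $V:=J_m$ of $Y$ viewed inside $\GrassScheme{S_m}{q}$ and to assemble the two-link chain
$$\IN(V)\;\prec\!\!\prec\;\GIN(V)\;\prec\!\!\prec\;\GIN(Y),$$
then to translate the resulting inequality into an assertion about the degree-$m$ pieces of the two ideals.

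First I would observe that, since $J$ is saturated and $m\ge r$, the vector space $V:=J_m$ is the point of $\HilbScheme{p(t)}{n}\subseteq \GrassScheme{S_m}{q}$ whose associated saturated ideal is $J$, so $V\in Y$. Because $Y$ is \GLin-stable, the orbit closure $\overline{\mathcal O(V)}$ is closed, irreducible, and $\mathrm{GL}$-invariant, i.e.~it is itself a \GLin-stable subset of $Y$. Applying Corollary \ref{appartenenza} to $\overline{\mathcal O(V)}$ gives that $\vs{\GIN(\overline{\mathcal O(V)})}$ is a point of $\overline{\mathcal O(V)}\subseteq Y$; moreover, Proposition \ref{prop:componenti}\eqref{prop:componenti_i} combined with Proposition \ref{prop:chiusura}\eqref{prop:chiusura_i} identifies $\GIN(\overline{\mathcal O(V)})=\IN(\overline{\mathcal O(V)})=\IN(\mathcal O(V))=\GIN(V)$. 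Since $\GIN(V)$ is Borel by Theorem \ref{th:eisenbud}\eqref{th:eisenbud_iii} and $\vs{\GIN(V)}\in Y$, the extensor $\GIN(V)$ lies in the set $\mathbf B_{S_m}^q\cap Y$ whose $\prec\!\!\prec$-maximum is $\GIN(Y)$ by Theorem \ref{th:minoreminore}\eqref{th:minoreminore_iv}; this yields the upper link $\GIN(V)\prec\!\!\prec\GIN(Y)$. The lower link $\IN(V)\prec\!\!\prec\GIN(V)$ is an instance of Theorem \ref{th:minoreminore}\eqref{th:minoreminore_iii} applied to the irreducible set $\{V\}$.

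It remains to pass from $\IN(V)\prec\!\!\prec\GIN(Y)$ to the stated $J_m\prec\!\!\prec(\mathbf{G}_Y)_m$. On the left, Theorems \ref{th:eisenbud}\eqref{th:eisenbud_i}--\eqref{th:eisenbud_iii} and \ref{gin appartiene} identify $\vs{\IN(V)}$ with the monomial space $\In_\prec(J)_m$, which is what $J_m$ stands for under the abuse of notation recalled just above the corollary. On the right, $\vs{\GIN(Y)}$ defines a point of $Y\subseteq \HilbScheme{p(t)}{n}$ by Corollary \ref{appartenenza}, so its regularity is at most $r\le m$; consequently the inclusion $I_{\vs{\GIN(Y)}}\subseteq (I_{\vs{\GIN(Y)}})^{\sat}=\mathbf{G}_Y$ is forced to be an equality in degree $m$ by the Hilbert-function count $p(m)$. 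Hence $\vs{\GIN(Y)}=(\mathbf{G}_Y)_m$, and the chain above delivers the conclusion.

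The only genuinely new step in this argument is showing that $\vs{\GIN(V)}$ returns to $Y$, since this is what unlocks Theorem \ref{th:minoreminore}\eqref{th:minoreminore_iv}; both links of the $\prec\!\!\prec$-chain and the final ideal-level translation are otherwise direct consequences of results already at hand in Sections \ref{sec:in e gin}--\ref{sec:minoreminore}.
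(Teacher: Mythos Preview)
Your proof is correct, but it takes an unnecessary detour through the intermediate term $\GIN(V)$. The paper reaches the conclusion in a single stroke: since $V=J_m$ is a point of $Y$, the term $\IN(V)$ already belongs to $\supp(Y)$; Theorem~\ref{th:minoreminore}\eqref{th:minoreminore_i} then says that $\IN(Y)=\max_{\prec\!\!\prec}\supp(Y)$, and Proposition~\ref{prop:componenti}\eqref{prop:componenti_i} gives $\IN(Y)=\GIN(Y)$, so $\IN(V)\prec\!\!\prec\GIN(Y)$ follows immediately. The identification $\vs{\GIN(Y)}=(\mathbf{G}_Y)_m$ is handled just as you do, via Corollary~\ref{prop:GIGI}. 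In particular, the step you flag as ``the only genuinely new step'' (forcing $\vs{\GIN(V)}$ back into $Y$ in order to invoke part~\eqref{th:minoreminore_iv}) is not needed at all: part~\eqref{th:minoreminore_i} applied directly to the irreducible closed set $Y$ already dominates every element of $\supp(Y)$, not just the Borel ones.
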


\begin{proof}
Taking the embedding of $Y$ in $\GrassScheme{S_m}{q}$, the point 
defined by $J$ is the $K$-vector space $J_m$. Then, $\wedge^q J_m=\IN(J_m)$  belongs to  $\supp(Y)$. By Theorem \ref{th:minoreminore}, we have $\GIN(Y)=\IN(Y)=\max_{\prec\!\!\prec} \supp(Y)$; hence in particular $\wedge^q J_m  \ {\prec\!\!\prec} \ \GIN(Y)$. 
We can conclude because $\GIN(J_m)$ is the homogeneous component of degree $m$ of the double-generic initial ideal \hskip -0.5mm $\mathbf{G}_Y$ of~$Y$, by Corollary~\ref{prop:GIGI}.
\end{proof}

\begin{remark} Let $Y$ be  \GLin-stable.   If $r_Y$ is the maximum among the regularities of the points of $Y$, \eqref{minoreminoreOK}  holds true also for every integer $m'$,  $ r_Y\leq m' <r$, by \cite[Theorem 1.2]{BBR}.  
\end{remark}

Let $I$ and $J$ be any two saturated monomial ideals defining  points of the same Hilbert scheme. Now, we show that $J_m \prec\!\!\prec I_m$ is equivalent to $J_{m+1} \prec\!\!\prec I_{m+1}$  if $J$ and $I$ are Borel-fixed and $\prec$ is the degrevlex term order. Moreover, if $p(t)$ \hskip -0.5mm is a constant polynomial, this result holds true for every term order on~$S$.
For every term $\tau = x_0^{\alpha_0}\cdot\dots\cdot x_n^{\alpha_n}$, we set $\min(\tau):=\min\{i\in \{0,\dots,n\} : \alpha_i\not= 0\}$.

We recall that if the monomial ideal $J$ is Borel-fixed, and $J=J_{\geq  m}$, with $m\geq \reg(J^\sat)$, then $J$ is  a stable ideal (see \cite{Seiler2009I,Seiler2009II} and the references therein for details about stable ideals and their properties).
We extend \cite[Definition 2.7]{mall} to such an ideal: we call  {\em growth-vector of $J_m$} the vector $gv(J_m):=(v_0,\dots,v_n)$, with $v_i:=\vert\{ \tau \in J_m\cap \mathbb T_m : \min(\tau)=i \}\vert$.

\begin{lemma}\label{lemma:growth-vector}
Let $J$ be any $m$-regular Borel-fixed ideal with Hilbert polynomial $p(t)$. Then the growth-vector of $J_m$ depends only on $p(t)$ and $m$.
\end{lemma}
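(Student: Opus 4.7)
The plan is to express $gv(J_m)$ in terms of Hilbert-function values of iterated variable-quotients of $S/J$ and then to exploit the Borel-fixedness of $J$ to show that these values depend only on $p(t)$ and $m$. Since $gv(J_m)$ depends only on the $K$-vector space $J_m$, I first replace $J$ by $(J_m)=J_{\geq m}$, which is still Borel-fixed, $m$-regular, and has Hilbert polynomial $p$.

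For each $i\in\{0,1,\ldots,n+1\}$ set $A_i:=\{\tau\in J_m : \min(\tau)\ge i\}$, so that $v_i=|A_i|-|A_{i+1}|$. Since $J$ is a monomial ideal, one has $J+(x_0,\ldots,x_{i-1})=(J\cap K[x_i,\ldots,x_n])+(x_0,\ldots,x_{i-1})$, and a direct count of degree-$m$ monomials on both sides yields
\begin{equation*}
|A_i| \;=\; \binom{n-i+m}{n-i} \;-\; c_i, \qquad c_i := H_{S/(J+(x_0,\ldots,x_{i-1}))}(m).
\end{equation*}
It therefore suffices to prove that each $c_i$ depends only on $p(t)$ and $m$ (with the fixed integers $n$ and $i$).

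To this end, the Borel property of $J$ yields the colon identity $J:x_k=J:(x_k,\ldots,x_n)$, and combined with the monomial identity $(I+L):x=(I:x)+(L:x)$ it gives
\begin{equation*}
(J+(x_0,\ldots,x_{k-1})):x_k \;=\; J:(x_k,\ldots,x_n)+(x_0,\ldots,x_{k-1}).
\end{equation*}
By the Bayer--Stillman regularity criterion, the sequence $x_0,x_1,\ldots$ is filter-regular on $S/J$; hence $\reg(J+(x_0,\ldots,x_{k-1}))\le\reg(J)\le m$ for every $k$, so $c_k=p_k(m)$, where $p_k(t)$ denotes the Hilbert polynomial of $S/(J+(x_0,\ldots,x_{k-1}))$. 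Filter-regularity of $x_k$ produces in sufficiently high degree a short exact sequence
\begin{equation*}
0\to\bigl(S/(J+(x_0,\ldots,x_{k-1}))\bigr)(-1)\xrightarrow{\ \cdot x_k\ } S/(J+(x_0,\ldots,x_{k-1}))\to S/(J+(x_0,\ldots,x_k))\to 0,
\end{equation*}
which yields the polynomial recursion $p_{k+1}(t)=p_k(t)-p_k(t-1)$. Iterating from $p_0=p$ gives $p_k(t)=\sum_{j=0}^{k}(-1)^j\binom{k}{j}p(t-j)$, so $c_k$ depends only on $p(t)$ and $m$. Combined with the formula for $|A_i|$ above, this proves the lemma.

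The main obstacle is that $J+(x_0,\ldots,x_{k-1})$ ceases to be Borel-fixed as soon as $k\ge 1$, so the Borel argument cannot be iterated naively; one must instead invoke the Bayer--Stillman filter-regularity of the sequence $x_0,x_1,\ldots$ on $S/J$ for every Borel-fixed $J$. This single input simultaneously bounds the regularities of all the iterated quotients by $m$ and delivers the polynomial recursion that identifies each $c_k$ with an explicit function of $p(t)$ and $m$.
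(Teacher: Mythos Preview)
Your proof is correct and takes a genuinely different route from the paper's. The paper argues purely combinatorially: since $J_{\geq m}$ is stable, the Eliahou--Kervaire formula gives
\[
q(t)=\dim S_t-p(t)=\sum_{i=0}^{n}v_i\binom{t-m+i}{i},
\]
and because the binomials $\binom{t-m+i}{i}$ form a basis of the polynomials of degree $\le n$ in $t$, the coefficients $v_i$ are read off directly from $q(t)$ (top--down, starting with $v_n=1$). Your argument is instead homological: you realise $|A_i|$ as a Hilbert-function value of the successive section $S/(J+(x_0,\dots,x_{i-1}))$, then use that $x_0,x_1,\dots$ is a filter-regular sequence on $S/J$ to bound all the regularities by $m$ and to obtain the difference recursion $p_{k+1}(t)=p_k(t)-p_k(t-1)$, which pins each $c_k$ down as $(\Delta^k p)(m)$. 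Both routes are short; the paper's is more self-contained, while yours explains conceptually \emph{why} the growth vector is a Hilbert-polynomial invariant, namely because it records the Hilbert polynomials of the iterated hyperplane sections.

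One small caveat: the identity $J:x_k=J:(x_k,\dots,x_n)$ that you invoke holds for strongly stable ideals, hence for Borel-fixed ideals in characteristic~$0$, but can fail for Borel-fixed ideals in positive characteristic (e.g.\ $J=(x_0^2,x_1^2)$ in $\mathrm{char}\,2$). This does not damage your argument, since what you actually need---filter-regularity of the sequence $x_0,x_1,\dots$ on $S/J$---follows in every characteristic from the Bayer--Stillman fact $J:x_0^\infty=J^{\mathrm{sat}}$ for Borel-fixed $J$, applied inductively to the Borel-fixed image $\bar J\subset K[x_k,\dots,x_n]$. You might simply replace the colon identity by this saturation statement to make the proof characteristic-free on its face.
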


\begin{proof}
Let $v=(v_0,\dots,v_n)$ be the growth-vector of $J_m$. By \cite[Lemma 1.1]{EK}, for every $t\geq m$, we have
\begin{equation}
q(t)=\dim S_t -p(t)=\sum _{i=0}^n v_i \tbinom{t-m+i}{i}.
\end{equation}
Since $J$ is Borel-fixed and $m$-regular, then $J_{\geq m}$ is stable and the term $x_n^m$ belongs to $J_m$ \cite[Proposition 4.4]{Seiler2009II}, and we obtain that $v_n=1$. By induction $v_j$ is the product of $j!$ times the leading coefficient of $q(t)-\sum _{i=j+1}^{n} v_i\binom{t-m+i}{i}$.
\end{proof}

From now on, let $J$ and $I$ be $m$-regular {Borel-fixed} ideals in $S$ such that $\Proj(S/J)$ and $\Proj(S/I)$ share the same Hilbert polynomial $p(t)$. 

Given a term $\tau=x_0^{\alpha_0}\dots x_n^{\alpha_n} \in \mathbb T$, in the following we let $\partial_{x_i}(\tau):=\alpha_i$.

\begin{lemma}\label{minima}
Let $\prec$ be the degrevlex term order on $S$. Assume $J_m \prec\!\!\prec I_m$, and let $\omega\colon  J\cap \mathbb{T}_m\rightarrow I\cap \mathbb{T}_m$ be any function such that $\tau \preceq \omega(\tau)$ for every $\tau \in J_m\cap \mathbb T$.
If $\ell:=\min(\tau)$ then $\ell=\min(\omega(\tau))$ and $\partial_{x_\ell} (\tau)\geq\partial_{x_\ell} (\omega(\tau))$.
\end{lemma}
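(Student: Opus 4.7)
The plan is to combine the growth-vector equality from Lemma~\ref{lemma:growth-vector} with the specific character of the degrevlex term order: under degrevlex, increasing a low-index exponent \emph{decreases} a term, so any bijection that never decreases must respect the stratification by $\min$.

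Concretely, since $I$ and $J$ are $m$-regular Borel-fixed ideals with the same Hilbert polynomial, Lemma~\ref{lemma:growth-vector} gives $|\{\tau\in J_m\cap\mathbb{T}:\min(\tau)=i\}|=|\{\sigma\in I_m\cap\mathbb{T}:\min(\sigma)=i\}|$ for every $i$. Setting $A_i:=\{\tau\in J_m\cap\mathbb{T}:\min(\tau)\geq i\}$ and $B_i:=\{\sigma\in I_m\cap\mathbb{T}:\min(\sigma)\geq i\}$, we therefore have $|A_i|=|B_i|$ for all $i$. The map $\omega$ comes from the witness of $J_m\prec\!\!\prec I_m$ in Definition~\ref{partial order}, so it is a bijection, and it then suffices to prove the one-sided inclusion $\omega(A_i)\subseteq B_i$; the opposite inclusion will be automatic by cardinality.

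The one-sided inclusion is the only real step. Assume for contradiction that some $\tau\in A_i$ satisfies $\min(\omega(\tau))<i$, and set $k:=\min(\omega(\tau))$. Then $k<i\leq \min(\tau)$, so $\partial_{x_j}(\tau)=\partial_{x_j}(\omega(\tau))=0$ for every $j<k$, while $\partial_{x_k}(\omega(\tau))>0=\partial_{x_k}(\tau)$. By the explicit description of degrevlex recalled in Section~\ref{sec:generalities}, this forces $\omega(\tau)\prec\tau$, contradicting $\omega(\tau)\succeq\tau$. Hence $\omega(A_i)\subseteq B_i$, and by $|A_i|=|B_i|$ we get $\omega(A_i)=B_i$. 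Applying this with $i=\ell$ yields $\min(\omega(\tau))\geq\ell$; applying it with $i=\ell+1$ and using that $\tau\notin A_{\ell+1}$ forces $\omega(\tau)\notin B_{\ell+1}$, i.e.\ $\min(\omega(\tau))\leq\ell$. Combining, $\min(\omega(\tau))=\ell$.

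For the exponent inequality, once $\min(\omega(\tau))=\ell=\min(\tau)$ we know $\partial_{x_j}(\tau)=\partial_{x_j}(\omega(\tau))=0$ for $j<\ell$. If $\omega(\tau)=\tau$ the inequality is trivial; otherwise let $k:=\min\{j:\partial_{x_j}(\tau)\neq\partial_{x_j}(\omega(\tau))\}\geq\ell$. The degrevlex inequality $\omega(\tau)\succeq\tau$ then forces $\partial_{x_k}(\omega(\tau))<\partial_{x_k}(\tau)$. If $k=\ell$ we are done; if $k>\ell$ then $\partial_{x_\ell}(\omega(\tau))=\partial_{x_\ell}(\tau)$ and we are again done. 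The only delicate step is the inclusion $\omega(A_i)\subseteq B_i$; everything else is bookkeeping inside the degrevlex definition, so I do not expect any further obstacles.
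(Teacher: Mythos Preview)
Your argument is correct and follows essentially the same route as the paper: both proofs use the degrevlex characterization to obtain the inequality $\min(\tau)\leq\min(\omega(\tau))$, then invoke the equality of growth vectors from Lemma~\ref{lemma:growth-vector} together with bijectivity of $\omega$ to force equality, and finally read off the exponent inequality directly from the degrevlex definition. The paper compresses the counting step into a single sentence, whereas you spell out the sets $A_i$, $B_i$ and the inclusion $\omega(A_i)\subseteq B_i$ explicitly, but the underlying argument is the same.
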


\begin{proof}
Since we are using the degrevlex term order, then $\min(\tau)\leq \min(\omega(\tau))$  for every $\tau \in J_m\cap \mathbb T$. Hence, $\min(\tau)=\min(\omega(\tau))$, because the growth vector of $J_m$ is the same as the one of $I_m$ by Lemma \ref{lemma:growth-vector}. 
The second part of the statement follows directly from the definition of the degrevlex term order.
\end{proof}

\begin{proposition}\label{prop: m cresce} 
If $\prec$ is the degrevlex term order on $S$, then $J_m \prec\!\!\prec I_m$ if and only if $J_{m+1} \prec\!\!\prec I_{m+1}$.
\end{proposition}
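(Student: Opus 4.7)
The plan is to establish a canonical ``Eliahou--Kervaire style'' decomposition for the terms in $J_{m+1}$ and then transport bijections between $J_m \cap \mathbb T$ and $I_m \cap \mathbb T$ up to degree $m+1$ (and back) using Lemma~\ref{minima}. The key technical step is the following \emph{decomposition lemma}: for any $m$-regular Borel-fixed ideal $J$ and any $\eta \in J_{m+1} \cap \mathbb T$, the monomial $\eta / x_{\min(\eta)}$ lies in $J_m \cap \mathbb T$. To prove it I would write $\eta = g \cdot x^\gamma$ with $g$ a minimal generator of $J$ (so $\deg g \leq \reg(J) \leq m$, forcing $x^\gamma \neq 1$); the case $x_{\min(\eta)} \nmid g$ is immediate since then $g$ already divides $\eta/x_{\min(\eta)}$. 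In the remaining case $g = x_{\min(\eta)} g'$, Borel-fixedness yields $x_k g' \in J$ for every $k \geq \min(\eta)$; since all variables of $x^\gamma$ have index $\geq \min(\eta)$, picking any $x_k$ that divides $x^\gamma$ lets me rewrite $\eta/x_{\min(\eta)} = (x^\gamma/x_k) \cdot (x_k g') \in J$.

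For the implication $J_m \prec\!\!\prec I_m \Rightarrow J_{m+1} \prec\!\!\prec I_{m+1}$, take any bijection $\omega \colon J_m \cap \mathbb T \to I_m \cap \mathbb T$ witnessing the relation. Lemma~\ref{minima} (where the degrevlex hypothesis enters essentially) ensures $\min(\omega(\tau)) = \min(\tau)$, so
\[
\tilde\omega(\eta) := x_{\min(\eta)} \cdot \omega\bigl(\eta/x_{\min(\eta)}\bigr)
\]
defines a map $J_{m+1} \cap \mathbb T \to I_{m+1} \cap \mathbb T$ whose inverse is given by the same formula with $\omega^{-1}$ in place of $\omega$, by the decomposition lemma applied to $I$. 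Multiplicativity of the term order lifts $\tau \preceq \omega(\tau)$ to $\eta \preceq \tilde\omega(\eta)$, witnessing $J_{m+1} \prec\!\!\prec I_{m+1}$.

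For the converse, start from a bijection $\tilde\omega \colon J_{m+1} \cap \mathbb T \to I_{m+1} \cap \mathbb T$ witnessing $J_{m+1} \prec\!\!\prec I_{m+1}$. Since $m$-regularity implies $(m+1)$-regularity, Lemma~\ref{minima} at degree $m+1$ gives $\min(\tilde\omega(\eta)) = \min(\eta)$. Specializing the decomposition lemma to $\min(\eta) = 0$ identifies $\{\eta \in J_{m+1}\cap\mathbb T : \min(\eta)=0\} = x_0 \cdot (J_m \cap \mathbb T)$, and similarly for $I$; thus $\tilde\omega$ restricts to a bijection $x_0 \cdot (J_m \cap \mathbb T) \to x_0 \cdot (I_m \cap \mathbb T)$, and $\omega(\tau) := \tilde\omega(x_0 \tau)/x_0$ is the desired bijection, with $\tau \preceq \omega(\tau)$ obtained by cancelling $x_0$ in $x_0\tau \preceq x_0 \omega(\tau)$. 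The main obstacle is the decomposition lemma: the subcase $\min(\eta)=0$ follows quickly from $m$-regularity (by showing $\{\sigma \in S_m : x_0 \sigma \in J\} = J_m$ via Borel-fixedness and the fact $J_m^{\sat} = J_m$), but the general case needs the more careful Borel-fixedness argument outlined above.
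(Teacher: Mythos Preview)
Your proof is correct, and the forward direction coincides with the paper's: both write each $\eta\in J_{m+1}$ uniquely as $x_{\min(\eta)}\cdot\tau$ with $\tau\in J_m$ (the paper cites \cite[Lemma~1.1]{EK} for this, which is exactly your decomposition lemma) and set $\tilde\omega(\eta)=x_{\min(\eta)}\,\omega(\tau)$.

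The backward direction, however, genuinely differs. The paper identifies $J_m$ with the subset $J'_{m+1}:=\{\eta\in J_{m+1}: x_{\min(\eta)}^2\mid\eta\}$ via $\tau\mapsto\tau\cdot x_{\min(\tau)}$, and must then invoke the \emph{second} clause of Lemma~\ref{minima} (that $\partial_{x_\ell}(\tau)\ge\partial_{x_\ell}(\omega(\tau))$) to get $\omega_{m+1}^{-1}(I'_{m+1})\subseteq J'_{m+1}$, concluding by a cardinality count. Your identification $J_m\cong x_0\cdot J_m=\{\eta\in J_{m+1}:\min(\eta)=0\}$ is more direct: it needs only the \emph{first} clause of Lemma~\ref{minima} (equality of minima) to see that $\tilde\omega$ respects the stratum $\{\min=0\}$, and then dividing by $x_0$ finishes. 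So your route is a modest streamlining that makes the second half of Lemma~\ref{minima} unnecessary for this proposition.

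One technical caution: your derivation of the decomposition lemma uses the move ``$g=x_{\min(\eta)}g'\in J\Rightarrow x_kg'\in J$ for $k>\min(\eta)$'', which is the \emph{strongly stable} exchange and coincides with Borel-fixedness only in characteristic~$0$. The paper works over an arbitrary infinite field; it handles this by recording, just before Lemma~\ref{minima}, that $J_{\ge m}$ is \emph{stable} and then invoking \cite[Lemma~1.1]{EK}. You should cite that, or restrict to characteristic~$0$, rather than appeal to Borel-fixedness alone. (Your aside about ``$J_m^{\sat}=J_m$'' is unclear as written and in any case superfluous, since the $\min(\eta)=0$ case is already covered by the general lemma.)
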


\begin{proof}
First, assume that $J_m \prec\!\!\prec I_m$ and let $\omega_m\colon J\cap \mathbb{T}_{m}\rightarrow I\cap \mathbb{T}_{m}$ be a bijective function such that $\tau \preceq \omega_m(\tau)$. 

Every term in $J_{m+1}$ is of kind $\tau x_\ell$ for a unique $\tau \in J_m$ and $\ell\leq \min(\tau)$, and the same is for $I_{m+1}$ \cite[Lemma 1.1]{EK}. 
For every $\tau x_l\in J\cap \mathbb{T}_{m+1}$, we define  $\omega_{m+1}(\tau x_l):=\omega_m(\tau)x_l$. Since $\prec$ is a term order, it is immediate that $\omega_{m+1}(\tau x_l)\succ \tau x_l$.
Then, by Lemma \ref{minima} we see that the function $\omega_{m+1}\colon J\cap \mathbb{T}_{m+1}\rightarrow I\cap \mathbb{T}_{m+1}$ is bijective.

Vice versa, assume that $J_{m+1} \prec\!\!\prec I_{m+1}$ and let $\omega_{m+1}\colon  J\cap \mathbb{T}_{m+1}\rightarrow I\cap \mathbb{T}_{m+1}$ be a bijective function such that $\tau x_\ell\preceq \omega_{m+1}(\tau x_\ell )$. We now construct $\omega_m\colon J\cap \mathbb{T}_{m}\rightarrow I\cap \mathbb{T}_{m}$. 

We observe that there is a bijection between the terms in $J_m$ and the subset  $J_{m+1}'$ of terms in $J_{m+1}$ that are divisible by the square of their minimal variable; indeed we obtain such a bijection  associating to $\tau \in J_m\cap\mathbb T$ the term $\tau \cdot x_{\min(\tau)}\in J_{m+1}$.  The same happens for the subset $I_{m+1}'\subset I_{m+1}$ defined in the same way  as $J'_{m+1}$. 

By Lemma \ref{minima}, we obtain $\omega_{m+1}^{-1}(I_{m+1}') \subseteq J_{m+1}' $. On the other hand $I_{m+1}'$ and $J_{m+1}'$ have the same cardinality (that of $J\cap \mathbb{T}_{m}$ and $I\cap \mathbb{T}_{m}$). Hence, $\omega_{m+1}^{-1}(I_{m+1}') =  J_{m+1}'$ and we obtain the bijection $\omega_m$ by setting $\omega_m(\tau)=\omega_{m+1}(\tau\cdot x_\ell) /x_\ell$ where $\ell:=\min(\tau)$.
\end{proof}

\begin{proposition}\label{m cresce con polinomio costante}
If $p(t)=d$ is a constant Hilbert polynomial, then $J_m \prec\!\!\prec I_m$ if and only if $J_{m+1} \prec\!\!\prec  I_{m+1}$, for every term order $\prec$ on $S$.
\end{proposition}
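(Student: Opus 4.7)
The plan is to exploit the very rigid structure of Borel-fixed ideals with \emph{constant} Hilbert polynomial $d$ in order to produce, in degree $m+1$, a decomposition of the monomials into an $x_0$-divisible part and a non-$x_0$-divisible part that is identical for $J$ and $I$. Once this is in hand, both implications reduce to routine bookkeeping.

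\emph{Structural decomposition.} Since $J$ and $I$ are $m$-regular, they coincide with their saturations in degrees $m$ and $m+1$. Among the homogeneous Borel-fixed primes of $S$, the unique one with $\dim S/\mathfrak p=1$ is $\mathfrak p=(x_1,\dots,x_n)$, which does not contain $x_0$. Hence the only associated prime of the saturated Borel-fixed ideal $J^\sat$ with constant Hilbert polynomial $d$ is $(x_1,\dots,x_n)$, so $x_0$ is a non-zero-divisor on $S/J^\sat$. The multiplication map $\cdot x_0\colon (S/J^\sat)_m \to (S/J^\sat)_{m+1}$ is then an injection between $d$-dimensional spaces, hence a bijection. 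Setting $N_{m+1}:=\{\sigma\in S_{m+1}\cap\mathbb T : x_0\nmid\sigma\}$, taking complements yields
\[
J_{m+1}\cap\mathbb T \;=\; \{x_0\tau : \tau\in J_m\cap\mathbb T\}\,\sqcup\, N_{m+1},
\]
and the analogous equality for $I_{m+1}\cap\mathbb T$, with the same summand $N_{m+1}$.

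\emph{Forward implication.} Given a bijection $\omega_m\colon J_m\cap\mathbb T\to I_m\cap\mathbb T$ with $\tau\preceq\omega_m(\tau)$, I extend it along the decomposition above by $\omega_{m+1}(x_0\tau):=x_0\,\omega_m(\tau)$ and $\omega_{m+1}(\sigma):=\sigma$ for $\sigma\in N_{m+1}$. Bijectivity is immediate and $\sigma\preceq\omega_{m+1}(\sigma)$ follows from the multiplicativity of the term order. \emph{Reverse implication.} I use the characterization of Proposition~\ref{intrinseco}\eqref{intrinseco_ii}: for any $c'\in S_m\cap\mathbb T$, apply the degree-$(m+1)$ inequality at $c:=x_0c'$. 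The structural splitting together with $\tau\succeq c'\Leftrightarrow x_0\tau\succeq x_0c'$ gives
\[
\vert\{\sigma\in J_{m+1}\cap\mathbb T : \sigma\succeq x_0c'\}\vert \;=\; \vert\{\tau\in J_m\cap\mathbb T : \tau\succeq c'\}\vert \;+\; \vert\{\sigma\in N_{m+1} : \sigma\succeq x_0c'\}\vert,
\]
and the analogous identity for $I$; the common $N_{m+1}$-contributions cancel to yield the degree-$m$ inequality, and Proposition~\ref{intrinseco} then delivers $J_m\prec\!\!\prec I_m$.

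The main obstacle is precisely the structural decomposition: it relies on the sharp facts that for a saturated Borel-fixed ideal with $p(t)=d$ the smallest variable $x_0$ is a non-zero-divisor on the quotient, and that the Hilbert function has already reached its asymptotic value $d$ in both degrees $m$ and $m+1$. For non-constant Hilbert polynomials the analogues of $N_{m+1}$ for $J$ and $I$ generally differ, the clean cancellation argument breaks down, and one must retreat to the degrevlex hypothesis used in Proposition~\ref{prop: m cresce}.
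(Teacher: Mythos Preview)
Your proof is correct and rests on the same structural observation as the paper's: for an $m$-regular Borel-fixed ideal with constant Hilbert polynomial, every monomial of degree $\geq m$ not divisible by $x_0$ already lies in the ideal, so only the $x_0$-divisible monomials need to be compared. The paper states this fact directly, invokes Proposition~\ref{intrinseco}\eqref{intrinseco_iv} to strip off the common non-$x_0$-divisible part, and appeals to Lemma~\ref{lemma:growth-vector}. You instead derive the decomposition $J_{m+1}\cap\mathbb T = x_0\cdot(J_m\cap\mathbb T)\sqcup N_{m+1}$ from the fact that $x_0$ is a non-zero-divisor on $S/J^{\sat}$, then handle the forward direction by extending the bijection along this splitting and the reverse direction via the counting criterion of Proposition~\ref{intrinseco}\eqref{intrinseco_ii} evaluated at $c=x_0c'$. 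The arguments are equivalent in spirit; yours is a bit more self-contained (it does not need Lemma~\ref{lemma:growth-vector}) and makes the passage between degrees $m$ and $m+1$ fully explicit, at the cost of being slightly longer.
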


\begin{proof}
Being the Hilbert polynomial constant, $J_t$ and $I_t$ contain all the terms of degree $t$ in the variables $x_1,\dots,x_n$, for every $t\geq m$. Hence, we can consider only the terms $\tau$ with $\min(\tau)=0$ and conclude by Proposition \ref{intrinseco}\eqref{intrinseco_iv} and Lemma \ref{lemma:growth-vector}. 
\end{proof}


\section{Applications}
\label{sec:applications}

We always consider $\HilbScheme{p(t)}{n}$ embedded in $\GrassScheme{S_{m}}{q}$ for some $m\geq r$, and, for the sake of semplicity, we denote by 
$\mathbf B_{S_m}^q \cap \HilbScheme{p(t)}{n}$ the set of Borel-fixed extensor terms corresponding to points of $\HilbScheme{p(t)}{n}$. Recall that {\em all} the terms of $\mathbf B_{S_m}^q\cap \HilbScheme{p(t)}{n}$ can be obtained by the algorithms presented in \cite{CLMR,PL} in  characteristic 0, and in \cite{B2014} for every characteristic.

In this section, we show how the properties of the generic initial ideal and of the partial term order ${\prec\!\!\prec}$ in $\mathbf B_{S_m}^q \cap \HilbScheme{p(t)}{n}$ can be used to investigate the topological structure and the rationality of the irreducible components of a Hilbert scheme. 
The following first result singles out a condition that every Borel-fixed ideal defining a point of a \GLin-stable subset must satisfy.


\begin{proposition} \label{cor:condizione necessaria}
 Let $Y$ be a $\GLin$-stable subset of $\HilbScheme{p(t)}{n}$.
\begin{enumerate}[(i)]
\item If $L\in\mathbf B_{S_m}^q \cap \HilbScheme{p(t)}{n}$ and $\vs{L} {\prec\!\!\not\prec } (\mathbf{G}_Y)_m$,  then $\vs{L} \notin Y$.
\item If $V$ is a $K$-point of $\HilbScheme{p(t)}{n}$ and $\vs{\GIN(V)} {\prec\!\!\not\prec }  (\mathbf{G}_Y)_m$,  then $V \notin Y$.
\end{enumerate}
\end{proposition}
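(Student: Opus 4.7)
The plan is to prove both statements as contrapositives of Corollary \ref{ginmaggiore}, which already ensures that the double-generic initial ideal dominates every point of $Y$ with respect to $\prec\!\!\prec$ in degree $m$. The key is to produce, from each hypothesis $\vs{L} \in Y$ or $V \in Y$, a saturated ideal defining a point of $Y$ whose degree $m$ component equals $\vs{L}$ or $\vs{\GIN(V)}$.

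For item (i), I would argue directly. Assuming $\vs{L}\in Y$, let $J$ be the saturated ideal of $S$ defining the corresponding point. Since $m\geq r$ is at least the Gotzmann number, $J$ is $m$-regular, so $\dim_K J_m = q = \binom{n+m}{n}-p(m)$; on the other hand $\vs{L}\subseteq J_m$, and $\dim_K\vs{L}=q$, so in fact $J_m=\vs{L}$. Corollary \ref{ginmaggiore} then yields $\vs{L}=J_m \prec\!\!\prec (\mathbf{G}_Y)_m$, contradicting the hypothesis.

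For item (ii), I would reduce to item (i) by passing to the closure of the orbit. Assuming $V\in Y$, $\GLin$-stability of $Y$ and closedness give $\overline{\mathcal O(V)}\subseteq Y$. The subset $\overline{\mathcal O(V)}$ is closed, irreducible and $\GLin$-stable, hence a $\GLin$-stable subset in our sense. By Proposition \ref{prop:chiusura}, $\GIN(\overline{\mathcal O(V)})=\GIN(V)$, and by Corollary \ref{appartenenza} applied to $\overline{\mathcal O(V)}$, the point $\vs{\GIN(V)}$ belongs to $\overline{\mathcal O(V)}\subseteq Y$. By Corollary \ref{cor:chiusura3}, $\GIN(V)$ is Borel, so $L:=\GIN(V)$ satisfies the hypotheses of item (i). Applying item (i) gives $\vs{\GIN(V)}\prec\!\!\prec (\mathbf{G}_Y)_m$, contradicting the hypothesis.

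There is no real obstacle here; both statements are essentially immediate contrapositives once one packages Corollary \ref{ginmaggiore} together with the fact that Borel extensors in $\mathbf B_{S_m}^q\cap\HilbScheme{p(t)}{n}$ arise as the degree $m$ component of a saturated ideal. The only mild subtlety is checking that $J_m=\vs{L}$ for the saturated ideal attached to $\vs{L}\in Y$, which is where the hypothesis $m\geq r$ is used.
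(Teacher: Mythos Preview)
Your proof is correct and follows essentially the same approach as the paper, which simply records that both items are immediate consequences of Theorem~\ref{th:minoreminore} and Corollary~\ref{ginmaggiore}. Your argument for (i) is exactly the contrapositive of Corollary~\ref{ginmaggiore}; for (ii) you take a slightly longer route via Corollary~\ref{appartenenza} to place $\vs{\GIN(V)}$ inside $Y$ and then invoke (i), whereas the paper implicitly uses that $\GIN(V)\in\supp(\O(V))\subseteq\supp(Y)$ together with Theorem~\ref{th:minoreminore} directly---but both arguments amount to the same maximality statement.
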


\begin{proof}
The statement is an immediate consequence of Theorem \ref{th:minoreminore} and of Corollary \ref{ginmaggiore}. Indeed, by these results, the $m$-degree homogeneous part of the double-generic initial ideal of $Y$ determines the maximal term among the Borel terms in $Y$ and, hence, among all the terms in $Y$.
\end{proof}

From Proposition \ref{cor:condizione necessaria}, we have therefore that if $\vs{L}$ belongs to a \GLin-stable subset $Y$, where $L$ is a term, then necessarily $\vs{L} {\prec\!\!\prec } (\mathbf{G}_Y)_m$.

\subsection{\texorpdfstring{Detection of different components in a Hilbert scheme}{Detection of different components in a Hilbert scheme}}
\label{subsec:1}

In this subsection, we see that some interesting lower bounds for the number of irreducible components of a Hilbert scheme spring out from the properties of the double-generic initial ideal and of the partial term order ${\prec\!\!\prec}$.

\begin{proposition}\label{prop:numero componenti}
Let $\prec$ be a term order in $S$ and $M_{\prec\!\!\prec}$ be the number of the maximal terms in $\mathbf B_{S_m}^q \cap \HilbScheme{p(t)}{n}$ w.r.t.~$\prec\!\!\prec$. Then, there are at least $M_{\prec\!\!\prec}$ irreducible components in $\HilbScheme{p(t)}{n}$.
\end{proposition}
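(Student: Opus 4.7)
The plan is to construct an injection from the set of $\prec\!\!\prec$-maximal elements of $\mathbf B_{S_m}^q \cap \HilbScheme{p(t)}{n}$ into the set of irreducible components of $\HilbScheme{p(t)}{n}$. First, since $\HilbScheme{p(t)}{n}$ is closed and $\mathrm{GL}$-invariant, Proposition \ref{prop:componenti}\eqref{prop:componenti_ii} guarantees that every irreducible component $Y$ of $\HilbScheme{p(t)}{n}$ is a \GLin-stable subset, so its double-generic initial ideal $\mathbf{G}_Y$ is well-defined by Definition \ref{def:gigi}. Moreover, by Corollary \ref{cor:chiusura3} the extensor $\GIN(Y)$ is Borel-fixed, and by Corollary \ref{appartenenza} the point $\vs{\GIN(Y)}$ lies in $Y$, so the term $(\mathbf{G}_Y)_m=\GIN(Y)$ belongs to $\mathbf B_{S_m}^q \cap \HilbScheme{p(t)}{n}$.

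Next, for every $\prec\!\!\prec$-maximal term $L \in \mathbf B_{S_m}^q \cap \HilbScheme{p(t)}{n}$, I would pick an irreducible component $Y_L$ of $\HilbScheme{p(t)}{n}$ passing through $\vs{L}$ (such a component exists because $\vs{L}$ is a point of the Hilbert scheme). Letting $J$ be the saturation of the Borel ideal generated by $\vs{L}$ in $S$, one has $J_m=\vs{L}$ since $m$ is at least the Gotzmann number of $p(t)$; hence Corollary \ref{ginmaggiore} applied to $Y_L$ yields $L\prec\!\!\prec (\mathbf{G}_{Y_L})_m$. Because $(\mathbf{G}_{Y_L})_m$ itself lies in $\mathbf B_{S_m}^q \cap \HilbScheme{p(t)}{n}$ by the first paragraph, the $\prec\!\!\prec$-maximality of $L$ forces the equality $L=(\mathbf{G}_{Y_L})_m$.

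Finally, the assignment $L\mapsto Y_L$ is injective: if $Y_L=Y_{L'}$ for two $\prec\!\!\prec$-maximal terms $L,L'$, the identification established above gives $L=(\mathbf{G}_{Y_L})_m=(\mathbf{G}_{Y_{L'}})_m=L'$. Hence the number of irreducible components of $\HilbScheme{p(t)}{n}$ is at least $M_{\prec\!\!\prec}$. The crux of the argument is the bridge between the set of Borel ideals defining points of $Y$ and the partial order $\prec\!\!\prec$ provided by Corollary \ref{ginmaggiore}; no further difficulty arises, since Corollary \ref{appartenenza} already ensures that the double-generic initial ideal of each component is itself represented by a Borel term in $\mathbf B_{S_m}^q \cap \HilbScheme{p(t)}{n}$.
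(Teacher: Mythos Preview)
Your proposal is correct and follows essentially the same approach as the paper: the paper's proof is a one-line reference to Proposition~\ref{prop:componenti}\eqref{prop:componenti_0}, Proposition~\ref{cor:condizione necessaria}, and Theorem~\ref{th:minoreminore}, and your argument simply unpacks this, invoking Corollary~\ref{ginmaggiore} directly (on which Proposition~\ref{cor:condizione necessaria} is built) to obtain $L\prec\!\!\prec(\mathbf{G}_{Y_L})_m$ and then using maximality to force equality. The only cosmetic point is the identification of the extensor term $\GIN(Y)$ with the vector space $(\mathbf{G}_Y)_m$; strictly one has $\vs{\GIN(Y)}=(\mathbf{G}_Y)_m$, but the paper adopts the same abuse of notation.
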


\begin{proof}
The statement follows directly by Propositions \ref{prop:componenti}\eqref{prop:componenti_0} and \ref{cor:condizione necessaria} and Theorem~\ref{th:minoreminore}.
\end{proof}

Assuming $n>2$, if $J\subset S$ is a Borel-fixed ideal, we denote by $\sat_{x_0,x_1}(J)$ the ideal generated by the evaluations at $(1,1,x_2,\dots,x_n)$ of the terms generators of $J$ and call it the {\em $x_0,x_1$-saturation of $J$} (see \cite{R1}).
Denote by $\Lambda$ the term in $\mathbf B_{S_m}^q\cap \HilbScheme{p(t)}{n}$ corresponding to the unique saturated lex-segment ideal in $S$ whose Hilbert polynomial is $p(t)$.

\begin{corollary}\label{cor:numero componenti}
If $\mathrm{char}(K)=0$ and $\sat_{x_0,x_1}(\vs{L})\not= \sat_{x_0,x_1}(\vs{\Lambda})$ for every maximal term $L \in\mathbf B_{S_m}^q \cap \HilbScheme{p(t)}{n} \setminus \{\Lambda\}$, then there are at least $M_{\prec\!\!\prec}+1$ irreducible components in $\HilbScheme{p(t)}{n}$.
\end{corollary}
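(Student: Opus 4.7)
The plan is to use the unique irreducible component $C$ of $\HilbScheme{p(t)}{n}$ containing the lex-segment point $\vs{\Lambda}$ as an extra component beyond the $M_{\prec\!\!\prec}$ supplied by Proposition \ref{prop:numero componenti}. First, by Proposition \ref{prop:numero componenti} (combined with Theorem \ref{th:minoreminore}\eqref{th:minoreminore_iv} and Corollary \ref{ginmaggiore}), the $M_{\prec\!\!\prec}$ maximal Borel terms $L_1,\dots,L_{M_{\prec\!\!\prec}}$ of $\mathbf B_{S_m}^q\cap\HilbScheme{p(t)}{n}$ single out $M_{\prec\!\!\prec}$ pairwise distinct irreducible components $Y_1,\dots,Y_{M_{\prec\!\!\prec}}$, with $(\mathbf{G}_{Y_i})_m=\vs{L_i}$ for each $i$.

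Second, because $\mathrm{char}(K)=0$ and $\Lambda$ corresponds to the saturated lex-segment ideal with Hilbert polynomial $p(t)$, the Reeves--Stillman theorem guarantees that $\vs{\Lambda}$ is a smooth point of $\HilbScheme{p(t)}{n}$, hence lies on a unique irreducible component, which I call $C$. To separate $C$ from the $Y_i$'s I plan to invoke the result from \cite{R1} that any two Borel-fixed ideals defining points on the same irreducible component of the Hilbert scheme have equal $x_0,x_1$-saturation. This turns $\sat_{x_0,x_1}(\vs{\Lambda})$ into a component-invariant of $C$ and $\sat_{x_0,x_1}(\vs{L_i})$ into a component-invariant of $Y_i$; the equality $C=Y_i$ would then force $\sat_{x_0,x_1}(\vs{L_i})=\sat_{x_0,x_1}(\vs{\Lambda})$, which the hypothesis forbids for every maximal $L_i\neq\Lambda$. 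Hence $C\neq Y_i$ whenever $L_i\neq\Lambda$, and $C$ provides an $(M_{\prec\!\!\prec}+1)$-th irreducible component.

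The main obstacle I anticipate is ruling out the possibility that $\Lambda$ itself is one of the maximal terms $L_j$, for then $C=Y_j$ would already be counted among the $M_{\prec\!\!\prec}$ components. The cleanest way to handle this is to read the hypothesis as implicitly excluding this case, which is coherent with $L$ being restricted to $\mathbf B_{S_m}^q\cap\HilbScheme{p(t)}{n}\setminus\{\Lambda\}$ in the statement; alternatively one may try to show that under the stated saturation condition $\Lambda$ cannot coincide with any $\mathbf{G}_{Y_j}$, by exploiting additional rigidity of the lex-segment ideal combined with Theorem \ref{th:minoreminore}\eqref{th:minoreminore_iv}. Apart from this point, the proof is a clean combination of Proposition \ref{prop:numero componenti}, the Reeves--Stillman smoothness of $\Lambda$, and the $x_0,x_1$-saturation invariant from \cite{R1}.
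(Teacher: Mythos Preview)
Your approach is essentially the paper's: combine Proposition~\ref{prop:numero componenti} with \cite[Theorem~6]{R1}. Two remarks.

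First, your phrasing of Reeves's result is stronger than what is actually proved there. \cite[Theorem~6]{R1} does not assert that the $x_0,x_1$-saturation is an invariant of \emph{every} irreducible component; it is a criterion specific to the lex-segment component $C$: a Borel-fixed ideal lies on $C$ if and only if its $x_0,x_1$-saturation equals $\sat_{x_0,x_1}(\vs{\Lambda})$. This is precisely the implication you need. Since $\vs{L_i}\in Y_i$, the equality $C=Y_i$ would place $\vs{L_i}$ on $C$ and force $\sat_{x_0,x_1}(\vs{L_i})=\sat_{x_0,x_1}(\vs{\Lambda})$, contradicting the hypothesis whenever $L_i\neq\Lambda$. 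So your argument goes through, but the invariant you invoke is particular to $C$, not to the $Y_i$.

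Second, the edge case you flag---$\Lambda$ itself being $\prec\!\!\prec$-maximal---is genuine and is not addressed by the paper's one-line proof either. If $\Lambda=L_j$ for some $j$, then $C=Y_j$ (by smoothness of the lex point) and one recovers only $M_{\prec\!\!\prec}$ components. The corollary is meant to be read (and Example~\ref{ex:4 componenti} confirms this reading) under the tacit assumption that $\Lambda$ is not among the $\prec\!\!\prec$-maximal terms; your first suggested resolution is the intended one.
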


\begin{proof}
It is enough to apply Proposition \ref{prop:numero componenti} and \cite[Theorem 6]{R1}.
\end{proof}

\begin{remark}
If $L$ is a maximal term in $\mathbf B_{S_m}^q\cap \HilbScheme{p(t)}{n}$, then the corresponding ideal $(\vs{L})$ is strongly stable, also if the field $K$ has positive characteristic. Indeed, let $L'=\sigma_1\wedge \dots \wedge  \sigma_q$ be a Borel-fixed extensor term whose corresponding ideal is not strongly stable. Then, over any   field of charactestic zero, $\tau_1\wedge \dots \wedge  \tau_q:=\GIN(\langle \sigma_1,  \dots, \sigma_q\rangle )$ is a Borel term corresponding to a strongly stable ideal with Hilbert polynomial $p(t)$ and $\tau_1\wedge \dots \wedge  \tau_q\succ\!\!\succ \sigma_1\wedge \dots \wedge  \sigma_q$.   Therefore,  in $\mathbf B_{S_m}^q\cap \HilbScheme{p(t)}{n}$ there is at least a term which is  $\succ\!\!\succ$ of $L'$.   Following the terminology introduced in \cite{CS2013},  the  ideal $(\tau_1, \dots , \tau_q) $ is the {\it zero-generic initial ideal of} $(\sigma_1,\dots ,  \sigma_q)$.
\end{remark}

The bounds of Proposition \ref{prop:numero componenti} and of Corollary \ref{cor:numero componenti} are not meaningful in two cases. The first is when $p(t)$ is a constant Hilbert polynomial, because then all the Borel-fixed ideals are on the same component, hence for every term order on $S$ we will find a unique maximal term w.r.t.~$\prec\!\!\prec$. The second case is when the given term order $\preceq$ on $S$ is the deglex one, because there is the unique maximal term corresponding to the lex-segment ideal. Anyway, in general we can get useful information, although the lower bound depends on the term order given on $S$, as the following example shows. 

\begin{example}\label{ex:4 componenti}
For $n=3$ and $p(t)=7t-5$, the Gotzmann number is $r=16$. We get the complete list of the $112$ strongly stable ideals in $\HilbScheme{7t-5}{3}$ by \cite{CLMR,PL} and compare their intersections with $S_{16}$ w.r.t.~$\prec\!\!\prec$ for several  term orders in $S$ . As just observed,  there is only one maximal element for the lexicographic  term order. If we consider the term order on $S$ given by the weight vector $[w_0=1,w_1=2,w_2=9,w_3=12]$ (and ties broken by lex) we obtain 
two maximal terms  corresponding to the ideals with saturations $\mathfrak b_1:=(x_3^3,x_3^2x_2,$ $x_3x_2^2,x_3^2x_1,x_2^5)$ and $\mathfrak b_2:=(x_3^2,x_3x_2^3,x_2^4)$, respectively. Computing the $x_0,x_1$-saturation, we see that neither of them  lies on the component containing the lex-segment ideal, because $(\vs{\Lambda})^{\sat}=(x_3,x_2^8,x_2^7x_1^9)$. Thus, there are at least $3$ irreducible components in $\HilbScheme{p(t)}{n}$  by Corollary \ref{cor:numero componenti}. 

If we choose the degrevlex term order, we find $4$ maximal terms corresponding to the ideals with the following saturations: the ideals $\mathfrak b_1$, $\mathfrak b_2$ previously considered and
\begin{equation*}
\begin{split}
&\mathfrak b_3:=(x_3^3,x_3^2x_2^2,x_3x_2^3,x_3^2x_2x_1,x_3x_2^2x_1,x_3^2x_1^2, x_3x_2x_1^2,
x_3x_1^3,x_2^7),\\
&\mathfrak b_4:=(x_3^3,x_3^2x_2,x_3x_2^2,x_3^2x_1^2,x_3x_2x_1^2,x_2^6).
\end{split}
\end{equation*}
We conclude there are at least $4$ irreducible components in $\HilbScheme{p(t)}{n}$, by Proposition \ref{prop:numero componenti} since
in this case  the hypothesis of Corollary \ref{cor:numero componenti} does not hold.
\end{example}

\subsection{Maximal Hilbert function in a \GLin-stable subset}\label{subsec:2}
In this subsection, if $f$ and $g$ are two numerical functions, we say that {\em $f$ is greater than $g$} if $f(t)\geq g(t)$, for every $t\in\mathbb N$, and write $f\geq g$.

As we have already recalled in Section \ref{sec:minoreminore}, if a monomial ideal $J$ is Borel-fixed, and $J=(J_m)$, with $m\geq \reg(J^\sat)$, then $J$ is stable. 

\begin{theorem}\label{funzione massima}
Let $\prec$ be the degrevlex term order in $S$. If $V$ and $V'$ are two $K$-points of $\HilbScheme{p(t)}{n}$ such that $J:=I_V$ and $I:=I_{V'}$ are Borel-fixed ideals, then
$$V \prec\!\!\prec V' \ \Rightarrow \ \dim_K(J^{\sat})_t\geq \dim_K(I^{\sat})_t, \ \forall \ t\geq 0.$$
In particular, if $Y$ is a \GLin-stable subset  of $\HilbScheme{p(t)}{n}$, the Hilbert function of $\mathrm{Proj}(S/\mathbf{G}_Y)$ is the maximum among the Hilbert functions of $\mathrm{Proj}(S/H)$, where $H$ varies among the saturated ideals defining points of $Y$.
\end{theorem}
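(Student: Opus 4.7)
The plan is to reduce the computation of $\dim_K(J^\sat)_t$ for a Borel-fixed ideal $J=(J_m)$ to a combinatorial count inside $J_m$, and then to transport that count across the bijection realizing $J_m\prec\!\!\prec I_m$. For $t\geq m$ there is nothing to prove: since $m$ is at least the regularity of both $J^\sat$ and $I^\sat$, the two quantities $\dim_K(J^\sat)_t$ and $\dim_K(I^\sat)_t$ coincide with $\binom{n+t}{n}-p(t)$. Hence the content lies entirely in the degrees $t<m$.

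The key identity I would establish is
\[
(J^\sat)_t=\{\tau\in\mathbb T_t\colon x_0^{m-t}\tau\in J_m\}\qquad\text{for every }t\leq m,
\]
valid for any Borel-fixed ideal $J$ with $J=(J_m)$. The inclusion $\supseteq$ is immediate. For $\subseteq$, I first observe that $J^\sat=J:x_0^\infty$ for Borel-fixed $J$: if $x_0^Nf\in J$, then iterating the Borel exchange $x_0\mapsto x_i$ inside $J$ gives $x_i^Nf\in J$ for every $i$. Given $\tau\in(J^\sat)_t$, let $N$ be the minimal integer with $\tau x_0^N\in J$, and assume $N>m-t$ toward a contradiction. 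Writing $\tau x_0^N=\sigma\mu$ with $\sigma\in J_m\cap\mathbb T$ and $\mu\in\mathbb T_{t+N-m}$, the case $x_0\mid\mu$ contradicts minimality at once. Otherwise $x_0^N\mid\sigma$, and, picking $i\geq 1$ with $x_i\mid\mu$, I apply Borel to $\sigma$ to swap one copy of $x_0$ with $x_i$; the resulting element of $J_m$ multiplied by $\mu/x_i$ equals $\tau x_0^{N-1}$, again contradicting minimality. Consequently $\dim_K(J^\sat)_t$ counts the terms $\sigma\in J_m\cap\mathbb T$ with $\partial_{x_0}(\sigma)\geq m-t$.

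Here the degrevlex hypothesis enters. Choose a bijection $\omega_m\colon J_m\cap\mathbb T\to I_m\cap\mathbb T$ realizing $J_m\prec\!\!\prec I_m$. By Lemma \ref{minima} one has $\min(\sigma)=\min(\omega_m(\sigma))$ and $\partial_{x_{\min(\sigma)}}(\sigma)\geq\partial_{x_{\min(\sigma)}}(\omega_m(\sigma))$; a case split on whether $\min(\sigma)=0$ shows that $\partial_{x_0}(\sigma)\geq\partial_{x_0}(\omega_m(\sigma))$ unconditionally. Hence $\omega_m^{-1}$ injects the terms of $I_m$ divisible by $x_0^{m-t}$ into those of $J_m$ divisible by $x_0^{m-t}$, yielding $\dim_K(J^\sat)_t\geq\dim_K(I^\sat)_t$. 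For the ``in particular'' statement, a saturated $H$ defining a point of $Y$ has $\Gin(H)$ Borel-fixed, and by Bayer--Stillman (applicable because $\prec$ is degrevlex) still saturated; by Proposition \ref{prop:componenti} and $\GLin$-invariance $\Gin(H)\in Y$, and $(\Gin(H))_m\prec\!\!\prec(\mathbf{G}_Y)_m$ by Corollary \ref{ginmaggiore}. Applying the inequality just proved with $J=\Gin(H)$ and $I=\mathbf{G}_Y$, together with the fact that $H$ and $\Gin(H)$ share the same Hilbert function, yields $H_{S/H}(t)\leq H_{S/\mathbf{G}_Y}(t)$ for every $t$. The delicate point throughout is the descent in the key identity: forcing the minimal $x_0$-power down to $m-t$ when $x_0\nmid\mu$ demands a carefully chosen Borel swap inside $\sigma$, and this is the one place where the Borel hypothesis does nontrivial combinatorial work.
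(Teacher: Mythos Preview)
Your proof is correct and follows essentially the same strategy as the paper's: reduce to $t<m$, identify $\dim_K(J^{\sat})_t$ with the number of terms of $J_m$ divisible by $x_0^{m-t}$, and then compare these counts for $J_m$ and $I_m$ using $J_m\prec\!\!\prec I_m$.

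The one noteworthy difference is in the comparison step. The paper observes that, in degrevlex, a degree-$m$ term $\tau$ is divisible by $x_0^{m-t}$ if and only if $\tau\preceq x_n^{t}x_0^{m-t}$; it can therefore invoke Proposition~\ref{intrinseco}\eqref{intrinseco_ii} directly with the threshold $c=x_n^{t}x_0^{m-t}$ to obtain the inequality in one line. You instead route through Lemma~\ref{minima} to show $\partial_{x_0}(\sigma)\geq\partial_{x_0}(\omega_m(\sigma))$ and build an explicit injection. Both arguments are short and ultimately encode the same degrevlex fact; the paper's is marginally slicker because it avoids the case split. Your detailed justification of the identity $(J^{\sat})_t=\{\tau:x_0^{m-t}\tau\in J_m\}$ (which the paper simply asserts ``because $J$ is Borel-fixed'') is a welcome addition.
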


\begin{proof}
It is enough to prove $\dim_K(J^{\sat})_t\geq \dim_K(I^{\sat})_t$ for every $t<m$, because $V$ and $V'$ are points of the same Hilbert scheme and $m$ is an upper bound for the regularities of both $J$ and $I$.

For every $t<m$, $\dim_K(J^{\mathrm{sat}})_t$ is the number of terms of $J_m$ which are divisible by $x_0^{m-t}$, because $J$ is {Borel-fixed}.
Since $V'\succ \!\!\succ V$, we can apply Proposition \ref{intrinseco}  \eqref{intrinseco_ii}  to $c=x_n^{t}x_0^{m-t}$ and see that  the number of terms in $\vs{V}$ divisible  by $x_0^{m-t}$ is larger than or equal to those in $V'$. Hence, we obtain $\dim_K(J^{\mathrm{sat}})_t\geq \dim_K(I^{\mathrm{sat}})_t$.

The last statement follows from Theorem \ref{th:minoreminore}\eqref{th:minoreminore_ii} and the fact that, for every homogeneous polynomial ideal $H$, $\mathrm{gin}_{\prec}(H^{\sat})=\mathrm{gin}_{\prec}(H)^{\sat}$, being $\prec$ the degrevlex term order.
\end{proof}

\begin{remark}
By Theorems \ref{funzione massima} and \ref{th:minoreminore}\eqref{th:minoreminore_ii} we get another method to find different irreducible components of a Hilbert scheme that consists in detecting the maximal Hilbert functions of projective schemes with a given Hilbert polynomial. This method might be easier to use than the detection of the maximal Borel terms w.r.t.~the partial order $\prec\!\!\prec$. However, the detection of the maximal Hilbert functions gives a lower bound on the number of irreducible components which is far from being sharp
: for instance, in Example \ref{ex:4 componenti} we find $4$ maximal Borel-fixed terms but there are only $2$ maximal Hilbert functions.
\end{remark}

\begin{remark} \label{rem:maximum}
The existence of the maximum among the Hilbert functions on a \GLin-stable subset of $\HilbScheme{p(t)}{n}$ can be proved by semicontinuity in the following way, although we observe that Theorem \ref{funzione massima} gives a constructive answer. Let $Y$ be a \GLin-stable subset of $\HilbScheme{p(t)}{n}$, $m$ be an upper bound on the Ca\-stel\-nuo\-vo-Mumford regularity of points in $Y$. We define the following numerical function $f:\mathbb N\rightarrow\mathbb N$
\[f(t)=\begin{cases}
\max\lbrace H_{S/I}(t)\ \vert\ I\in Y\rbrace, \quad \text {if } 1\leq t\leq m\\
p(t), \quad \text{otherwise.}
\end{cases}
\]
For every $1\leq s\leq m$, the subset $A_s=\lbrace I\in Y\vert H_{S/I}(s)\geq f(s)\rbrace$ of $Y$ is open by semicontinuity \cite[Remark 12.7.1 in chapter III]{H77}. Hence $\cap_{s=1}^m A_s$ is an open subset of $Y$ and it is non-empty, because every $A_s$ is non-empty by construction of $f$. Thus, there is an open subset of ideals $I\in Y$ having maximal Hilbert function $f$. 
\end{remark}
It would be nice to find a result analogous to that of Theorem \ref{funzione massima} for the deglex term order. We state the following conjecture.

\begin{conj}\label{conjLex}
Let $Y$ be a \GLin-stable subset of $\HilbScheme{p(t)}{n}$ and $\preceq$ be the deglex term order. Then, the Hilbert function of $\mathrm{Proj}(S/\mathbf{G}_Y)$ is the minimum among the Hilbert functions of $\mathrm{Proj}(S/H)$, where $H$ varies among the ideals defining a point of $Y$.
\end{conj}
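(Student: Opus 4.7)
The plan is to mirror the strategy of Theorem~\ref{funzione massima} with two modifications dictated by the deglex setting: a preliminary reduction to saturated Borel-fixed ideals on $Y$, and a replacement for the downward-closed characterization of $x_0$-divisibility that powers the degrevlex argument.

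First, I would reduce to the comparison of saturated Borel-fixed ideals on $Y$. Given an arbitrary saturated ideal $H$ defining a point $V$ of $Y$, set $J:=\Gin(H)$. Since $\Gin$ preserves the Hilbert function, $H_{S/H}=H_{S/J}$; and since $J\subseteq J^\sat$, one has $H_{S/J^\sat}\leq H_{S/J}=H_{S/H}$ in every degree. The ideal $J^\sat$ is saturated by construction, Borel-fixed (the saturation of a Borel-fixed ideal is again Borel-fixed), and defines a point of $Y$: indeed, $J^\sat$ and $J$ define the same projective scheme, and the Hilbert scheme point associated to $J_m=\vs{\GIN(V)}$ lies in $Y$ by Proposition~\ref{prop:componenti}\eqref{prop:componenti_0} applied with $W=\overline{\O(V)}\subseteq Y$ and $W'=\{V\}$. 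It therefore suffices to prove that $H_{S/I}\geq H_{S/\mathbf{G}_Y}$ for every saturated Borel-fixed ideal $I$ defining a point of $Y$.

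For such $I$, Corollary~\ref{ginmaggiore} supplies $I_m\prec\!\!\prec(\mathbf{G}_Y)_m$ at the Gotzmann degree $m$. Both ideals are saturated and Borel-fixed, so for every $t<m$ one has the standard identification
\[
\dim_K I_t=\#\{\tau\in I_m\cap\mathbb T:x_0^{m-t}\mid\tau\},
\]
and likewise for $\mathbf{G}_Y$. Hence the conjecture reduces to the purely combinatorial claim: if $A,B\subseteq S_m\cap\mathbb T$ are the monomial bases in degree $m$ of two saturated Borel-fixed ideals sharing the same Hilbert polynomial and $A\prec\!\!\prec B$ in the deglex order, then
\[
\#\{\tau\in A:x_0^{m-t}\mid\tau\}\leq\#\{\sigma\in B:x_0^{m-t}\mid\sigma\}\quad\text{for every }t<m.
\]

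The main obstacle lies exactly here. In the degrevlex proof of Theorem~\ref{funzione massima}, a term $\tau\in S_m$ is divisible by $x_0^{m-t}$ if and only if $\tau\preceq x_n^t x_0^{m-t}$, so the required monotonicity is immediate from Proposition~\ref{intrinseco}\eqref{intrinseco_ii}. This equivalence fails for deglex (for instance $x_1^m\prec x_n^t x_0^{m-t}$ in deglex but is not divisible by $x_0^{m-t}$), so a direct transport of the argument is not available. A first route I would try is to establish a deglex analogue of Proposition~\ref{prop: m cresce} restricted to saturated Borel-fixed ideals with a fixed Hilbert polynomial: namely, that $A\prec\!\!\prec B$ in degree $m$ implies $\{\sigma\in S_{m-1}:\sigma x_0\in A\}\prec\!\!\prec\{\sigma\in S_{m-1}:\sigma x_0\in B\}$. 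Iterating this and combining with the equality of dimensions in degree $\geq m$ would propagate the comparison down to every lower degree and close the argument. A complementary route would be to exploit that the deglex $\prec\!\!\prec$-maximum among saturated Borel-fixed ideals with a prescribed Hilbert polynomial is closely related to the lex-segment ideal, and to combine Macaulay-type bounds on the growth of Hilbert functions with the known minimality of the lex-segment Hilbert function to reach the conclusion.
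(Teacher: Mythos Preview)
The statement you are attempting to prove is labeled \textbf{Conjecture}~\ref{conjLex} in the paper; the authors explicitly present it as an open question analogous to Theorem~\ref{funzione massima} for the deglex order, and they give no proof. So there is no ``paper's own proof'' to compare against, and your proposal should be read as an attack on an open problem rather than a reconstruction of a known argument.

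Your reduction step is sound: passing from an arbitrary saturated $H$ on $Y$ to the saturated Borel-fixed ideal $(\Gin H)^{\sat}$ correctly reduces the conjecture to the combinatorial inequality you isolate, and your justification that this ideal still defines a point of $Y$ via Proposition~\ref{prop:componenti}\eqref{prop:componenti_0} is fine. You also diagnose the genuine obstruction accurately: the set $\{\tau\in S_m:x_0^{m-t}\mid\tau\}$ is an initial segment for degrevlex but not for deglex, so Proposition~\ref{intrinseco}\eqref{intrinseco_ii} does not apply directly.

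However, neither of your two proposed routes is a proof; both are programmatic sketches whose key steps are themselves unproved. For the first route, the deglex analogue of Proposition~\ref{prop: m cresce} that you need---that $A\prec\!\!\prec B$ in degree $m$ propagates to the $x_0$-quotients in degree $m-1$---is precisely the missing combinatorial fact, and you give no mechanism for it (Lemma~\ref{minima}, which drives the degrevlex case, has no deglex counterpart because $\min(\tau)$ is not monotone along deglex). For the second route, you invoke the lex-segment ideal, but $\mathbf{G}_Y$ is the $\prec\!\!\prec$-maximum among the Borel ideals \emph{on $Y$}, not among all Borel ideals with Hilbert polynomial $p(t)$; there is no reason the lex-segment ideal should lie on an arbitrary \GLin-stable subset $Y$, so Macaulay-type extremality of lex does not obviously transfer to $\mathbf{G}_Y$. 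In short, your write-up is a clear and honest identification of where the difficulty lies, but it does not close the gap---which is consistent with the statement's status as a conjecture.
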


\subsection{Rational components of a Hilbert scheme}\label{subsec:rat}
The following results deal with the rationality of irreducible components in a Hilbert scheme. The main tools we use are some of the features of double-generic initial ideals together with arguments introduced in \cite{FR,LR,RT}.

\begin{theorem}\label{razionale}
Let $Y$ be an isolated irreducible component of $\HilbScheme{p(t)}{n}$. If $\GIN(Y)$ corresponds to a smooth point in $Y$, then $Y$ is rational.
\end{theorem}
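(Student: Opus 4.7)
Denote $V^* := \vs{\GIN(Y)}$; by Corollary~\ref{appartenenza} this $K$-point lies in $Y$, and by hypothesis it is a smooth point of $Y$. The plan is to exhibit an affine open subvariety $X\subseteq Y$ containing $V^*$ that is isomorphic to $\mathbb{A}^d$ with $d=\dim Y$; since $X$ would then be a dense affine open subset of the projective variety $Y$, rationality follows at once.

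First I would take $X:=Y\cap U_{\GIN(Y)}$. Since every irreducible component of $\HilbScheme{p(t)}{n}$ is $\GLin$-invariant, $Y$ is $\GLin$-stable, so Proposition~\ref{prop:componenti}\eqref{prop:componenti_i} gives $\IN(Y)=\GIN(Y)$, and then Proposition~\ref{lemma:UL}\eqref{lemma:UL_iv} ensures that $X=Y\cap \mathcal V_{\GIN(Y)}$ is dense and open in $Y$ and contains $V^*$. As the intersection of the closed $Y\subseteq \GrassScheme{S_m}{q}$ with the standard affine open $U_{\GIN(Y)}$ of the Grassmannian, $X$ is affine, and $V^*$ is smooth in $X$ too. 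Next I would introduce the contracting $\mathbb{G}_m$-action, in the spirit of Proposition~\ref{prop:componenti}\eqref{prop:componenti_0}: choose a one-parameter subgroup $\lambda:\mathbb{G}_m\to \GLin$ given by a diagonal matrix whose weights $w_0,\ldots,w_n$ refine the term order $\prec$ on the finite set $\mathbf T_{S_m}^q$, so that $\lim_{t\to 0}\lambda(t)\cdot V=\vs{\IN(V)}$ for every $V\in \GrassScheme{S_m}{q}$. Because $Y$ is $\GLin$-stable and the Plücker conditions defining $U_L$ and $\mathcal V_L$ are $\lambda$-semi-invariant, $X$ is preserved by $\lambda(\mathbb{G}_m)$, and the limit formula shows that every point of $X$ is attracted to the unique fixed point $V^*$.

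To conclude, I would show that any affine $K$-variety with a contracting $\mathbb{G}_m$-action to a smooth fixed point is isomorphic to the tangent space there. Setting $A:=\mathcal{O}(X)$, the $\mathbb{G}_m$-action makes $A=\bigoplus_{i\geq 0}A_i$ positively graded; the orbit-closure relation collapses the GIT quotient $\mathrm{Spec}(A_0)$ to a single $K$-point, forcing $A_0=K$, so $A_+:=\bigoplus_{i>0}A_i$ is the homogeneous maximal ideal corresponding to $V^*$. Smoothness of $X$ at $V^*$ gives $\dim_K(A_+/A_+^2)=\dim X=d$; lifting a $K$-basis to homogeneous $f_1,\ldots,f_d\in A_+$, graded Nakayama produces a graded surjection $\varphi:K[t_1,\ldots,t_d]\twoheadrightarrow A$ with $\ker\varphi\subseteq (t_1,\ldots,t_d)^2$. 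Since $X$ is irreducible of dimension $d$ and $A$ is therefore a domain of Krull dimension $d$, $\varphi$ must be an isomorphism, yielding $X\cong \mathbb{A}^d$ and hence rationality of $Y$. The hard part is this final linearization step; an equivalent route goes through Luna's étale slice theorem or the Bia{\l}ynicki-Birula decomposition applied at the smooth $\mathbb{G}_m$-fixed point $V^*\in Y$, exploiting that the attracting cell of a smooth fixed point is a locally closed subvariety isomorphic to an affine space.
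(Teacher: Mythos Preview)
Your proposal is correct and follows the same underlying strategy as the paper: restrict to the dense open $Y\cap \mathcal V_{\GIN(Y)}=Y\cap U_{\GIN(Y)}$ and show it is an affine space by exploiting the contracting action coming from the term order. The paper packages this second step by quoting results of \cite{FR,RT,LR}: the Gr\"obner stratum $\mathcal V_L\cap \HilbScheme{p(t)}{n}$ is a \emph{homogeneous scheme} with respect to a non-standard positive grading, and any isolated irreducible component of such a scheme that is smooth at the cone point is isomorphic to an affine space. You instead reprove this black box directly, constructing the one-parameter subgroup $\lambda$ explicitly and finishing with graded Nakayama (equivalently, the attracting-cell description from Bia{\l}ynicki--Birula). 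So the two arguments coincide conceptually; yours is self-contained, while the paper's is shorter by outsourcing the linearization step and, in exchange, has to check that $Y\cap\mathcal V_L$ is an \emph{isolated} component of the Gr\"obner stratum---this is where the hypothesis that $Y$ is an isolated component of $\HilbScheme{p(t)}{n}$ enters in the paper's version, whereas your direct argument on $Y\cap U_L$ sidesteps that identification.

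One small correction: the inference ``$X$ irreducible of dimension $d$ $\Rightarrow$ $A$ a domain'' is not valid in general. Either pass to $Y^{\mathrm{red}}$ at the outset (harmless for rationality, and $V^*$ remains a smooth point there since regular local rings are reduced), or note that positivity of the grading gives $\bigcap_n A_+^n=0$, hence $A\hookrightarrow \widehat{A}_{A_+}$; the latter is regular by smoothness at $V^*$, so $A$ is a domain and your surjection $K[t_1,\dots,t_d]\twoheadrightarrow A$ is forced to be an isomorphism.
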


\begin{proof}
Let $L:=\GIN(Y)$ and recall that $\mathcal V_L$ is the set of points of  $\GrassScheme{S_m}{q}$ having $L$ as initial extensor (see \eqref{eq:V tondo con L}). By Proposition \ref{prop:componenti}\eqref{prop:componenti_i}, $L$ is equal to $\IN(Y)$ and, by Proposition \ref{lemma:UL}, $\mathcal  V_L \cap Y$ is a dense open subset of $Y$. 

By \cite[Lemma 3.2]{RT} and \cite{LR}, the set of ideals in $S$ having a given monomial ideal $J$ as initial ideal w.r.t.~any given term order 
can be endowed with a structure of homogeneous scheme $X$ w.r.t.~a non-standard grading. The reduced scheme structure $X^{red}$ and the isolated irreducible components of $X$ turn out to be homogeneous too \cite[Corollary 2.7]{FR}. Moreover, $X$ is connected, because every isolated irreducible component contains $J$, and every isolated irreducible component of $X$ that is smooth at $J$ is isomorphic to an affine space \cite[Corollary 3.3]{FR}. 

We now apply these results to the monomial ideal $J=(\vs{L})$. Note that it is enough to consider the reduced scheme structure $X^{red}$ because we deal with the isolated irreducible components of $X$ that are smooth at the point $J$. By definition of Hilbert scheme, $X^{red}$ and $(\mathcal V_L \cap \HilbScheme{p(t)}{n})^{red}$ are isomorphic. Moreover, by Proposition \ref{lemma:UL}\eqref{lemma:UL_iv}, the set $(\mathcal V_L \cap \HilbScheme{p(t)}{n})^{red}$ contains the open subset $Y':=\mathcal V_L\cap Y$ of $Y$ which is one of the irreducible components of $(\mathcal V_L \cap \HilbScheme{p(t)}{n})^{red}$ and, hence, preserves a homogeneous scheme structure. Being $\vs{L}$ a smooth point of $Y$, it is also smooth on $Y'$. Thus, $Y'$ is isomorphic to an affine space and $Y$ is rational.
\end{proof} 

As an application of Theorem \ref{razionale} we recover the well-known fact that the irreducible component of a Hilbert scheme containing the unique saturated lex-segment ideal is rational (e.g.~\cite{LR}). We now present a new result in the following corollary, where we focus on the 2-codimensional Cohen-Macaulay points  of a Hilbert scheme.

\begin{corollary}\label{prop:razionale aCM codim 2}
Let $p(t)$ be a Hilbert polynomial of degree $n-2$. Every irreducible component $Y$ of $\HilbScheme{p(t)}{n}$ containing a point $V$ corresponding to an arithmetically Cohen-Macaulay subscheme is rational.\\
Furthermore, if $L=\GIN(V) $, then $\mathcal V_L$ is isomorphic to an affine space and $\mathcal U_L$ is the Cohen-Macaulay locus of $Y$.
\end{corollary}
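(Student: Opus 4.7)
The plan is to verify the hypothesis of Theorem \ref{razionale} for $Y$ and then exploit its proof to describe the two strata $\mathcal V_L$ and $\mathcal U_L$. By \cite[Th\'{e}or\`{e}me 2(i)]{Elli}, in codimension 2 every arithmetically Cohen-Macaulay point of $\HilbScheme{p(t)}{n}$ is smooth on the Hilbert scheme; hence $V$ is a smooth point, $Y$ is an isolated irreducible component, and the Cohen-Macaulay locus $Y_{ACM}$ of $Y$ is an open, dense, \GLin-stable subset of $Y$. Fixing the degrevlex term order on $S$ and using the classical preservation of projective dimension under the degrevlex generic initial ideal in characteristic zero (together with \cite{BS2}), the double-generic initial ideal $\mathbf{G}_Y=\Gin(I_V)^{\sat}$ is again saturated and ACM in codimension $2$. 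Therefore $\vs{\GIN(Y)} \in Y_{ACM}$ is a smooth point of $Y$, and Theorem \ref{razionale} yields the rationality of $Y$.

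For the furthermore, I would revisit the proof of Theorem \ref{razionale}, which realizes $\mathcal V_L \cap Y$ as an isolated irreducible component of $(\mathcal V_L \cap \HilbScheme{p(t)}{n})^{red}$ that is smooth at $\vs{L}$, and hence isomorphic to an affine space via \cite[Corollary 3.3]{FR}. Since $\vs{L}$ is a smooth point of $\HilbScheme{p(t)}{n}$, it lies on a unique irreducible component of the Hilbert scheme, and consequently on a unique isolated irreducible component of $\mathcal V_L \cap \HilbScheme{p(t)}{n}$. Combined with the connectedness of the latter scheme and the fact that all its isolated components pass through $\vs{L}$ (\cite[Corollary 2.7]{FR}), this forces $\mathcal V_L \cap \HilbScheme{p(t)}{n}$ to coincide with the affine space $\mathcal V_L \cap Y$.

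Finally, for the identification $\mathcal U_L \cap Y = Y_{ACM}$, the inclusion $\subseteq$ follows because every point of $\mathcal U_L$ has generic initial ideal with saturation $\mathbf{G}_Y$, which is ACM, and depth is preserved under the degrevlex gin. For the reverse inclusion $\supseteq$, I would invoke Ellingsrud's classification of the ACM locus in codimension $2$ by postulation: the Hilbert function is constant on each irreducible component of this locus, and in particular on $Y_{ACM}$. Combined with the rigidity of the Hilbert-Burch resolution, which forces an ACM saturated ideal in codimension $2$ to be determined numerically by its Hilbert function, one deduces that all ACM points of $Y$ share the same generic initial ideal, namely the one with saturation $\mathbf{G}_Y$. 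I expect this last step---promoting constancy of the Hilbert function to constancy of the gin, and hence of the extensor $L$---to be the main obstacle.
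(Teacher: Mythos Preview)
Your argument for rationality matches the paper's: choose degrevlex so that $\Gin(I_V)^{\sat}$ is again ACM, hence smooth on the Hilbert scheme by \cite{Fo} and \cite{Elli}, and apply Theorem~\ref{razionale}. Both you and the paper write $\mathbf{G}_Y=\Gin(I_V)^{\sat}$, i.e.\ $\GIN(V)=\GIN(Y)$, for the \emph{given} ACM point $V$ without justification; the clean way to reach the hypothesis of Theorem~\ref{razionale} is to first pick $V'$ in the dense open set $(C\cap Y)\cap \mathcal U_{\GIN(Y)}$ (Proposition~\ref{lemma:UL}\eqref{lemma:UL_v}) and run the argument for that point. Your treatment of $\mathcal V_L$ is more careful than the paper's (which simply invokes Theorem~\ref{razionale}) and is fine.

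For $\mathcal U_L = Y_{ACM}$ you rightly isolate the inclusion $Y_{ACM}\subseteq \mathcal U_L$ as the crux, and your instinct to pass through constancy of the Hilbert function on the ACM locus is natural---but the premise is false. Already in $\HilbScheme{4}{2}$, four collinear points and four points in general position are both ACM and lie on the unique component, yet have Hilbert functions $(1,2,3,4,4,\dots)$ versus $(1,3,4,4,\dots)$ and hence distinct degrevlex gins $(x_2,x_1^4)$ versus $(x_2^2,x_1x_2,x_1^3)$. Thus $\mathcal U_L\subsetneq C\cap Y$ for either choice of $L$, and the Hilbert--Burch rigidity you invoke cannot repair this. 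The paper's one-line justification (``the result we obtain holds for every $V\in C\cap Y$'') runs into the same wall; what the argument genuinely delivers is that $\mathcal U_L$ (with $L=\GIN(Y)$) is the open stratum of ACM points of $Y$ with the generic Hilbert function, not the full Cohen--Macaulay locus.
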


\begin{proof}
Recall that the subset $C$ of $\HilbScheme{p(t)}{n}$ formed by the points corresponding to arithmetically Cohen-Macaulay subschemes is an open subset containing $V$ \cite[Th\'{e}or\`{e}me (12.2.1)(vii)]{Gro}. Moreover, $V$ and every other point defining an arithmetically Cohen-Macaulay subscheme of codimension $2$  correspond to smooth points in $\HilbScheme{p(t)}{n}$ (see \cite{Fo}  for $n=2$ and \cite[Theorem 2(i)]{Elli} for $n\geq 3$).  Hence, $C\cap Y$ is a smooth,  non-empty open subset of $Y$.   It is well-known that if we choose the  degrevlex term order, then also $\Gin(I_V)^{\sat}=\Gin(I_V^{\sat})$ defines  an  arithmetically Cohen-Macaulay subscheme of $\PP^n_K$.  By Theorem \ref{gin appartiene}, the ideal $\Gin(I_V^{\sat})$ is the double-generic  initial ideal $\mathbf{G}_Y$ of $Y$, and its homogeneous component of degree $m$ is $\vs{L}$ with  $L=\GIN(Y)=\GIN(V)$.   
Thus, Theorem~\ref{razionale} allows us to conclude that $\mathcal V_L$ is an affine space and we observe that, by definition, $V$ belongs to $\mathcal U_L$. Note that the result we obtain holds for every $V\in C\cap Y$, hence $\mathcal U_L=C\cap Y$. 
\end{proof}

\begin{example} \label{ex:tommasino}
In this example we apply Theorem \ref{razionale} to the double-generic initial ideal $\mathbf{G}_Y$  of an irreducible component $Y$ of a Hilbert scheme, which is smooth for $Y$, but which is not smooth for the Hilbert scheme. 

Let us consider the Hilbert scheme $\HilbScheme{3t+2}{3}$ over a field of null  characteristic. There are $4$ saturated Borel ideals corresponding to points on this Hilbert scheme
$$\mathfrak b_1:=(x_{{3}},x_{{2}}^{4},x_{{1}}^{2}  x_{{2}}^{3} ), \quad \mathfrak  b_2:=(x_{{3}}^{2},x_{{2}}x_{{3}},x_{{1}}x_{{3}},x_{{2}}^{4},x_{{1}} x_2^3),$$
$$\mathfrak  b_3:=( x_{{3}}^{2},x_{{2}}x_{{3}},x_{{2}}^{3},x_{{1}}^{2}x_{{3}}), \quad\mathfrak  b_4:=( x_{{3}}^{2},x_{{2}}x_{{3}},x_{{2}}^{3},x_{{1}}x_{{2}}^{2}).$$
The ideal $\mathfrak b_1$ corresponds to the lex-segment point of $\HilbScheme{3t+2}{3}$. It is well-known that such a point belongs to a unique component, that we denote by $Y_1$, and this component is rational. By a direct computation, we find that  the dimension of $Y_1$ is $18$ and that its general point corresponds to the  union of a plane cubic curve and two isolated points. By \cite[Theorem 6]{R1}, we see that also $\mathfrak b_2$ and $\mathfrak b_3$ define points of $Y_1$, while $\mathfrak b_4$ does not.

If we choose the degrevlex term order, we find that $\mathfrak b_4$ is the maximum w.r.t.~${\prec\!\!\prec}$ of the Borel ideals. By a direct computation involving marked schemes (see \cite{CR11,BCLR,BLR}), using for instance either the Singular library \cite{michela,DGPS} or the algorithm described in \cite{BCR2}, we obtain that $\mathfrak b_4$ is contained in two irreducible components $Y_2$ and $Y_3$. Therefore, the  point corresponding to $\mathfrak b_4$ is not smooth for the Hilbert scheme. However,  it turns out to be smooth for both $Y_2$ and $Y_3$, and by Theorem \ref{razionale} we get that $Y_2$ and $Y_3$ are both rational. 

To complete the description, by a direct computation we find that  the dimension of $Y_2$ is $12$ and its general point corresponds to the disjoint union of a conic and a line.

Here are the generators of one of the ideals we obtain after a random specialization of the $12$ free parameters:
{\small{{\begin{enumerate}[]
\item $f_1=x_3^2 + 990x_0^2-x_2^2-3 x_1x_2-x_1x_3-2 x_1^2+67 x_0 x_3-23 x_0 x_2-68 x_0 x_1$
\item $f_2= x_2x_3 + 484x_0^2-x_2^2-2 x_1x_3+4 x_1^2+22 x_0x_3-88 x_0x_1$
\item $f_3=x_2^3 -6538x_0^3+3 x_0x_2^2+4 \, x_1^2x_3-x_2x_1^2+30 x_0x_1x_2+8 x_0x_1x_3+286 x_0x_1^2+6\,x_1^3-4x_0^2 x_3-711x_0^2 x_2-386 x_0^2x_1$
\item $f_4= x_1x_2^2 + 1913x_0^3-3x_0x_2^2+x_1^2x_3-3 x_0x_1^{2}x_2-3x_1x_2+2x_0x_1x_3+69 x_0 x_1^2+5x_1^3-x_0^2x_3+22x_0^2 x_2-815 x_0^2 x_1$
\end{enumerate}}}}
and the primary decomposition of this ideal 
{\small\begin{enumerate}[]
\item $\mathfrak p_{{1}}=(3x_1+x_3+23x_0,x_2-2x_1+22x_0)$
\item $\mathfrak p_{{2}}=(-2x_1+x_3+22x_0-x_2,7x_1^2-2x_1x_2+72x_0x_1-7x_0x_2+x_2^2-645x_0^2)$.
\end{enumerate}}

The dimension of $Y_3$ is $15$ and its general point corresponds to the union of a twisted cubic curve and a point. Here are the generators of one of the ideals we obtain after a random specialization of the $15$ free parameters:

{\small{{\begin{enumerate}[]
\item $f'_1 = x_3^2 + 37x_0^2-18x_1^2-x_2^2-3x_1x_3-3x_1x_2+2x_0x_3-6x_0x_2+15x_0x_1$
\item $f'_2= x_2x_3 + 31x_0^2-12x_1^2-x_2^2-2x_1x_3+2x_1x_2+x_0x_3+16x_0x_1$
\item $f'_3=x_2^3 -150x_0^3+86x_0x_1^2-24x_1^3-10x_1^2x_3 +37x_0x_1x_3+x_2x_1^2-7x_0x_1x_2-30x_0^2x_3-x_0^2x_2-11x_0^2x_1$
\item $f'_4=x_1x_2^2 -x_0^3-2x_0x_1^2+x_0x_2^2-2x_1^2x_3+3x_0x_1x_3-x_2x_1^2-x_0x_1x_2+5x_0^2x_3-3x_0^2x_1$
\end{enumerate}}}}
and the primary decomposition of this ideal 
{\small\begin{enumerate}[]
\item $\mathfrak p'_1=(x_0+x_1,x_2-8x_0,x_3-7x_0)$
\item $\mathfrak p'_2=(-x_1x_2-2\,x_1x_3-2\,x_0x_1-x_0^2+x_2^2+5\,x_0x_3, x_2x_3+30x_0^2-4\,x_1x_3+x_1x_2-12\,x_1^2+6\,x_0x_3+14\,x_0x_1,$
$x_3^2+36x_0^2-5\,x_1x_3-4\,x_1x_2-18\,x_1^2+7\,x_0x_3-6\,x_0x_2+13\,x_0x_1)$.
\end{enumerate}}

Finally, computing the marked schemes on $\mathfrak b_2$ and $\mathfrak b_3$, we check that $Y_1$, $Y_2$ and $Y_3$ are the only  irreducible components of $\HilbScheme{3t+2}{3}$.
\end{example}

\section*{Acknowledgments}
 
The authors are grateful to Anthony Iarrobino for his generous suggestions and inspiring comments on the topic and the writing of this paper.

\providecommand{\bysame}{\leavevmode\hbox to3em{\hrulefill}\thinspace}
\providecommand{\MR}{\relax\ifhmode\unskip\space\fi MR }
\providecommand{\MRhref}[2]{%
  \href{http://www.ams.org/mathscinet-getitem?mr=#1}{#2}
}
\providecommand{\href}[2]{#2}

\end{document}